\newtheorem{thm}{Theorem}[section]
\newtheorem{lem}[thm]{Lemma}
\newtheorem{cor}[thm]{Corollary}
\newtheorem{prop}[thm]{Proposition}
\theoremstyle{remark}
\newtheorem{defn}[thm]{Definition}
\newtheorem{rem}[thm]{Remark}
\newtheorem{exa}[thm]{Example}
\newtheorem{notation}[thm]{Notation}
\newtheorem*{prfofthmh0}{Proof of Theorem~\ref{h0upperthm}}
\newtheorem*{prfofthmh1}{Proof of Theorem~\ref{h1upperthm}}
\newtheorem*{acknowledgement}{Acknowledgment}
\title{Upper bounds on the slope of certain fibered surfaces}
\author{Makoto Enokizono}
\subjclass[2010]{14D06}
\thanks{
	{\bf Keywords:}
	fibered surface, slope inequality, cyclic covering.
}
\address{Makoto Enokizono,
	Department of Mathematics,
	Graduate School of Science,
	Osaka University,
	Toyonaka, Osaka 560-0043, Japan}
\email{m-enokizono@cr.math.sci.osaka-u.ac.jp}
 \def\qed{\hfill $\Box$} 
\begin{document}
\maketitle

\begin{abstract}
We establish the slope equality and give an upper bound of the slope for finite cyclic covering fibrations of an elliptic surface including bielliptic fibrations of genus greater than 5. We also give an upper bound of the slope for triple cyclic covering fibrations of a ruled surface and hyperelliptic fibrations, which provides a new proof of Xiao's upper bound.
\end{abstract}

\section*{Introduction}
We shall work over the complex number field $\mathbb{C}$.
Let $S$ be a smooth projective surface, $B$ a smooth projective curve and $f\colon S\to B$ a relatively minimal fibration of curves of genus $g\ge 2$.
Let $K_f$ denote the relative canonical bundle $K_S-f^{*}K_B$.
Consider the following relative invariants:
\begin{align*}
\chi_f:=&\ %\mathrm{deg}(f_{*}\mathcal{O}(K_f))=
\chi(\mathcal{O}_S)-\chi(\mathcal{O}_B)\chi(\mathcal{O}_F),\\
e_f:=&\ e(S)-e(B)e(F),
\end{align*}
where $F$ denotes a general fiber of $f$ and $e(M)$ the topological Euler characteristic of $M$.
Then the following are well known:
\begin{itemize}
\item (Noether) $12\chi_f=K_f^2+e_f$.
\item (Arakelov) $K_f$ is nef.
\item (Ueno) $\chi_f\ge 0$, and $\chi_f=0$ if and only if $f$ is locally trivial (i.e., a holomorphic fiber bundle).
\item (Segre) $e_f\ge 0$, and $e_f=0$ if and only if $f$ is smooth.
\end{itemize}

Assume that $f$ is not locally trivial.
We put 
$$
\lambda_f=\frac{K_f^2}{\chi_f}
$$
and call it the {\em slope} of $f$.
Then one sees $0<\lambda_f\le 12$ from the above results. 
In 1987, Xiao showed in \cite{xiao2} the inequality
$$
\lambda_f\ge 4-\frac{4}{g}.
$$
This inequality is sharp and it is well known that the slope attains the lower bound only if the fibration $f$ is hyperelliptic, i.e., a general fiber $F$ is a hyperelliptic curve (\cite{kon} and \cite{xiao2}).
As to the upper bound, Kodaira \cite{kod} constructed examples of fibrations with slope $12$, which are nowadays called Kodaira fibrations.
Thus the inequality $\lambda_f\le 12$ is sharp among all fibered surfaces.
On the other hand, Matsusaka \cite{ma} obtained an upper bound smaller than $12$ for hyperelliptic case
 and Xiao \cite{xiao_book} improved this bound.
In \cite{chen-tan}, upper bounds for genus $3$ fibrations are studied from another point of view. 

We studied in \cite{enoki} primitive cyclic covering fibrations of type $(g,h,n)$.
Roughly speaking, it is a fibered surface of genus $g$ obtained as the relatively minimal model of an $n$ sheeted cyclic branched covering of another fibered surface of genus $h$.
Note that hyperelliptic fibrations are nothing more than such fibrations of type $(g,0,2)$
and that bielliptic fibrations of genus $g\ge 6$ are those of type $(g,1,2)$ (cf.\ \cite{barja-naranjo} and  \cite{dbl}).
Here, a fibration is called {\em bielliptic} if a general fiber is a bielliptic curve, i.e., a non-singular projective curve obtained as a double covering of an elliptic curve.
In \cite{enoki}, we established the lower bound of the slope for such fibrations of type $(g,h,n)$ extending former results for $n=2$ in \cite{ba} and \cite{dbl}.
Furthermore, when $h=0$ and $n\ge 4$, we obtained even the upper bound (expressed as a function in $g$ and $n$) which is strictly smaller than $12$.
Recall that known examples of Kodaira fibrations, including Kodaira's original ones \cite{kod}, are presented as primitive cyclic covering fibrations with $h\ge 2$ and, in fact, there exist such fibrations for any $h\ge 2$ (see, \cite{bar} and \cite{kas}). Hence, as far as the upper bound of the slope strictly smaller than $12$ concerns, the remaining cases to be examined are $h=0$, $n=3$ and $h=1$, $n\ge 2$.

The purpose of the present article is to give an affirmative answer to the above mentioned upper bound problem by introducing numerical invariants attached to the singularities of the branch locus of the cyclic covering, and improving a coarser estimates in \cite{enoki}.
When $h=0$, a vertical component of the branch locus on a relatively minimal model is always a non-singular rational curve and this fact makes it much easier to handle singularities on the branch locus.
On the other hand, when $h\ge 0$, we must pay attention to all subcurves of fibers and their singularities in a fibration of genus $h$, which seems quite terrible.
Fortunately enough, when $h=1$, we have Kodaira's classification of singular fibers \cite{Kod} from which we know that major components are rational curves and singularities are mild.
This gives us a hope to extend results for $h=0$ to fibrations of type $(g,1,n)$.
In fact, we can show the following:

\begin{thm} \label{introh1slopeeqthm}
Assume that $g\ge (2n-1)(3n-1)/(n+1)$. Then,
there exists a function $\mathrm{Ind}\colon 
 \mathcal{A}_{g,1,n}\to \mathbb{Q}_{\geq 0}$ from the set 
 $\mathcal{A}_{g,1,n}$ of all fiber germs of primitive cyclic covering 
 fibrations of type $(g,1,n)$ such that $\mathrm{Ind}(F_p)=0$ for a 
 general $p\in B$ and
\begin{equation*}
K_f^2=\frac{12(n-1)}{2n-1}\chi_f+\sum_{p\in B}{\rm Ind}(F_p)
\end{equation*}
for any primitive cyclic covering fibration $f\colon S\rightarrow 
 B$ of type $(g,1,n)$.
\end{thm}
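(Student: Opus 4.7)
The plan is to follow the standard strategy for slope formulas of cyclic covering fibrations, carefully adapted to the case of a genus-one base. Let $\tilde{f}\colon\tilde{S}\to B$ be the relatively minimal genus-one fibration from which $f$ arises, and let $\tilde{R}\sim n\tilde{L}$ be the branch divisor of the $n$-fold cyclic cover on a suitable birational model. I would first express both $K_f^2$ and $\chi_f$ in terms of global numerical invariants of $\tilde{f}$ and $\tilde{L}$, plus local corrections supported on fibers. Using the standard cyclic-cover formulas (the canonical bundle formula for cyclic covers and the decomposition $\theta_*\mathcal{O}\cong\bigoplus_{i=0}^{n-1}\tilde{L}^{-i}$ combined with Riemann--Roch), one obtains schematically
\begin{align*}
K_f^2 &= n\bigl(K_{\tilde{f}}+(n-1)\tilde{L}\bigr)^2 + E_K,\\
\chi_f &= n\chi_{\tilde{f}}+\tfrac{n(n-1)(2n-1)}{12}\tilde{L}^2+\tfrac{n(n-1)}{4}K_{\tilde{f}}\cdot\tilde{L}+E_\chi,
\end{align*}
where $E_K$ and $E_\chi$ are local correction terms arising from the resolution of singularities of the branch locus and supported over finitely many points of $B$. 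This is essentially the setup of \cite{enoki}, sharpened in the present context.

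The algebraic fact driving the theorem is that the coefficient $\frac{12(n-1)}{2n-1}$ is exactly the ratio $n(n-1)^2:\frac{n(n-1)(2n-1)}{12}$ of the $\tilde{L}^2$-coefficients in $K_f^2$ and $\chi_f$. Consequently, in the combination $K_f^2-\frac{12(n-1)}{2n-1}\chi_f$, the $\tilde{L}^2$ contributions cancel globally. Because $\tilde{f}$ is relatively minimal elliptic, $K_{\tilde{f}}^2=0$ and $K_{\tilde{f}}\cdot F=0$, so the $K_{\tilde{f}}^2$ term also drops out. The remaining global terms involve only $K_{\tilde{f}}\cdot\tilde{L}$ and $\chi_{\tilde{f}}$. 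By Kodaira's canonical bundle formula, $K_{\tilde{f}}$ is (up to multi-fiber contributions) pulled back from $B$, so $K_{\tilde{f}}\cdot\tilde{L}$ breaks up into local intersections with horizontal branch components fiber by fiber; likewise, Noether's formula combined with $e(F)=0$ for a smooth elliptic fiber yields $12\chi_{\tilde{f}}=e_{\tilde{f}}=\sum_{p\in B}e(F_p)$, again a sum of local contributions. One then defines $\mathrm{Ind}(F_p)$ as the sum at $p$ of all these local pieces together with the localizations of $E_K-\frac{12(n-1)}{2n-1}E_\chi$, so that the asserted equality holds by construction.

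The main obstacle will be to show that $\mathrm{Ind}(F_p)$ is intrinsically a function of the fiber germ alone, independent of the choices made during the resolution, vanishes for a general $p\in B$, and (as implicit in the codomain $\mathbb{Q}_{\ge 0}$) is non-negative. This forces a case-by-case analysis based on Kodaira's classification of singular fibers of $\tilde{f}$ and the possible local configurations of the branch divisor along each such fiber; one must track how vertical branch components, tangencies with fiber components, and singularities of $\tilde{R}$ at nodes of reducible elliptic fibers all contribute. The numerical hypothesis $g\ge(2n-1)(3n-1)/(n+1)$ is expected to enter precisely here, bounding the admissible local branching data so that the singular-fiber contributions assemble into a well-defined local invariant with the right sign.
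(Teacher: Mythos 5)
Your route to the equality itself is the paper's route: expand $K_f^2$ and $\chi_f$ by the cyclic-cover formulas over the elliptic model, observe that $\tfrac{12(n-1)}{2n-1}$ is exactly the ratio of the $\mathfrak{d}^2$-coefficients so that this term drops out of $K_f^2-\tfrac{12(n-1)}{2n-1}\chi_f$, and localize the surviving global terms $K_\varphi\mathfrak{d}$ and $\chi_\varphi$ fiber by fiber using Kodaira's canonical bundle formula ($K_\varphi\equiv(\chi_\varphi+\sum_p(1-1/m_p))\Gamma$, $K_\varphi^2=0$) and Noether's formula, with the blow-up corrections supported on finitely many fibers; this is exactly Lemma~\ref{locallem} and the definition $\mathrm{Ind}(F_p)=K_f^2(F_p)-\lambda_{g,1,n}\chi_f(F_p)$. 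Your worry about independence of choices is also easily dispatched: for $h=1$ the relatively minimal model of $\widetilde{\varphi}$ is unique (no analogue of the elementary transformations needed in the ruled case), so the local indices are intrinsic to the fiber germ.

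The genuine gap is the non-negativity, which is part of the statement ($\mathrm{Ind}$ lands in $\mathbb{Q}_{\ge0}$) and which you defer to an unspecified case-by-case analysis over Kodaira fiber types, only ``expecting'' the hypothesis $g\ge(2n-1)(3n-1)/(n+1)$ to enter there. That is not where the difficulty lies, and as written the key step is missing. Once the local corrections $E_K,E_\chi$ are written out in the singularity indices $\alpha_k(F_p)$, $\alpha_0(F_p)$, $\varepsilon(F_p)$ together with $\chi_\varphi(F_p)$ and $\nu(F_p)=1-1/m_p$, the only combination without an evident sign, namely $\alpha_0(F_p)-2\varepsilon(F_p)$ (it enters with the $R^2$-coefficients $\tfrac{(n-1)^2}{n}$ and $\tfrac{(n-1)(2n-1)}{12n}$), cancels exactly in $K_f^2(F_p)-\tfrac{12(n-1)}{2n-1}\chi_f(F_p)$ --- the local shadow of your $\tilde L^2$ cancellation --- leaving
\begin{equation*}
\mathrm{Ind}(F_p)=n\sum_{k\ge1}\left(\frac{(n+1)(n-1)}{2n-1}k-1\right)\alpha_k(F_p)+\frac{n-1}{2n-1}\left((n+1)r-12n\right)\chi_\varphi(F_p)+\frac{(n+1)(n-1)r}{2n-1}\nu(F_p)+\varepsilon(F_p).
\end{equation*}
Every term here is manifestly non-negative as soon as $(n+1)r\ge12n$, and since $r=2(g-1)/(n-1)$ when $h=1$, that inequality is precisely the hypothesis $g\ge(2n-1)(3n-1)/(n+1)$; the vanishing at general $p$ is immediate because all indices vanish when $\Gamma_p$ is smooth, non-multiple and carries no singularity of $R$. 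So the genus bound is used only to make one linear coefficient non-negative, and no analysis of Kodaira's fiber types is needed for this theorem (that analysis is what the paper deploys later, for the upper bound of Theorem~\ref{h1upperthm}). Without identifying these local indices and the cancellation of $\alpha_0-2\varepsilon$, your $E_K-\lambda E_\chi$ has no visible sign and the proposed case analysis is not a proof.
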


\begin{thm} \label{introh1upperthm}
Let $f\colon S\to B$ be a primitive cyclic covering fibration of type $(g,1,n)$.
Then
$$
\lambda_f\le 12-\left\{\begin{array}{l}
\displaystyle{\frac{6n^2}{(n+1)(g-1)}},\quad \text{if } n\ge 4, \text{ or }n=3 \text{ and } g=4 \\
\displaystyle{\frac{24}{4g-17}},\quad \quad \quad \quad \text{if } n=3 \text{ and } g>4, \\
\displaystyle{\frac{2}{g-2}},\quad \quad \quad \quad \quad \text{if } n=2 \text{ and } g\ge 3.
\end{array}
\right.
$$
\end{thm}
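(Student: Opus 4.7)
The plan is to regard Theorem~\ref{introh1slopeeqthm} as an exact identity and derive the upper bound by producing matching upper estimates for the local indices $\mathrm{Ind}(F_p)$. Rewriting the slope equality in the form
$$
\lambda_f \;=\; \frac{12(n-1)}{2n-1} \;+\; \frac{\sum_{p\in B}\mathrm{Ind}(F_p)}{\chi_f},
$$
the claimed inequality $\lambda_f \le 12 - C(g,n)$ is equivalent to
$$
\sum_{p\in B}\mathrm{Ind}(F_p) \;\le\; \left(\frac{12n}{2n-1} - C(g,n)\right)\chi_f,
$$
so the whole problem is pushed onto a uniform fiberwise estimate of the newly introduced invariant.

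First I would unpack the local description of $\mathrm{Ind}(F_p)$ that should emerge from the proof of Theorem~\ref{introh1slopeeqthm}: a non-negative sum of contributions indexed by the singular points of the branch divisor $R$ of the cyclic covering lying over the corresponding fiber of the auxiliary genus-$1$ fibration. Once this decomposition is in place, the estimate can be carried out one branch singularity at a time, with a global summation at the end. The hypothesis $g \ge (2n-1)(3n-1)/(n+1)$ should enter here to guarantee that $R$ is ``large enough'' that the auxiliary fibration is well-defined and that every branch point is governed by its local model.

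The heart of the proof is then a case analysis driven by Kodaira's classification of singular fibers of the elliptic base fibration, combined with the admissible configurations of $R$ near each Kodaira type. For every such configuration I would compare the local contribution to $\mathrm{Ind}(F_p)$ with the local contribution to $\chi_f$. When $n\ge 4$ (and in the exceptional case $n=3,\,g=4$) a single uniform worst-case inequality suffices and produces the coefficient $C(g,n)=6n^2/\bigl((n+1)(g-1)\bigr)$ after summation. For $n=3$ with $g>4$ one obtains a sharper bound by isolating those triple covers whose branch divisor has a non-reduced component meeting a singular fiber; this refinement yields $24/(4g-17)$. For $n=2$, a relative Clifford-type input on $f_{*}\omega_{S/B}$, in the spirit of Xiao's original argument but tailored to the bielliptic/hyperelliptic setting here, delivers the constant $2/(g-2)$.

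The principal obstacle, as already anticipated in the introduction, lies exactly in these small-$n$ regimes. For $n=2,3$ the branch divisor has enough flexibility to concentrate at one elliptic fiber in configurations that inflate $\mathrm{Ind}(F_p)$ without a proportional increase in local $\chi_f$; one must enumerate these worst configurations, show that most of them are incompatible with relative minimality (by exhibiting a $(-1)$-curve that would be contracted), and push the remaining ones through the numerical comparison. This is where the coarser estimates of \cite{enoki} must be sharpened, and it is the technical core of the present paper; by contrast, the final substitution back into the slope equality is a mechanical step.
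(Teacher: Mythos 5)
Your opening reduction is correct but contentless: writing $\lambda_f=\tfrac{12(n-1)}{2n-1}+\sum_p\mathrm{Ind}(F_p)/\chi_f$ and asking for $\sum_p\mathrm{Ind}(F_p)\le\bigl(\tfrac{12n}{2n-1}-C(g,n)\bigr)\chi_f$ is, via Noether's formula $12\chi_f=K_f^2+e_f$, exactly equivalent to the fiberwise inequality $e_f(F_p)\ge\mu\,\chi_f(F_p)$ with $\mu=C(g,n)$ that the paper actually proves, so the ``mechanical substitution'' at the end is not where any difficulty lies. The genuine gap is that you supply no mechanism for the fiberwise estimate itself. The paper's proof rests on a web of auxiliary invariants attached to the vertical part of the branch curve ($\iota$, $\kappa$, the $j_{b,a}$, $\eta_p$, $\gamma_p$, the $u$-vertical type singularities) and on the key inequalities relating them: a lower bound for $\alpha_0^+(F_p)$ in terms of $\iota(F_p)+2\kappa(F_p)$ (Lemma~\ref{alpha0lem}), the conversion of $\sum_k\alpha''_k$ and $\sum_k k\alpha''_k$ into the $j$-indices (Lemma~\ref{alphaklem}), the control of the number of $(-1)$-curves inside $\widetilde R$ (Lemma~\ref{j''01lem}), the bound $\kappa(F_p)\le\tfrac23\sum_{a\ge2}(a-1)j_{\bullet,a}(F_p)-\dots$ (Lemma~\ref{n2h1kappalem}), and the comparison of $j'_{0,1}(F_p)$ with $\chi_\varphi(F_p)$ (Lemma~\ref{chiphilem}), all fed into a long Kodaira-type case analysis. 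Saying ``compare the local contribution to $\mathrm{Ind}(F_p)$ with the local contribution to $\chi_f$'' names the problem without solving it.

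Moreover, the two concrete devices you do propose would not close the argument. For $n=3$ there is no ``non-reduced component of the branch divisor'': $R$ is reduced by construction, and the refinement that produces $24/(4g-17)$ comes instead from bounding $j''_{0,1}(F_p)$, the number of exceptional $(-1)$-curves absorbed into $\widetilde R$. For $n=2$ a relative Clifford/Xiao-style analysis of $f_*\omega_{S/B}$ is the wrong tool here --- that machinery yields lower bounds on $K_f^2$, whereas the paper obtains $2/(g-2)$ by the same local singularity-index bookkeeping, refined with the $(2k+1\to 2k+1)$ singularity decomposition. Finally, your plan to discard the worst configurations as ``incompatible with relative minimality'' cannot work: Example~\ref{fiberexa} exhibits fiber germs that attain the bound, so the extremal configurations genuinely occur and must be estimated, not excluded. (You also misplace the hypothesis $g\ge(2n-1)(3n-1)/(n+1)$, which governs the non-negativity of $\mathrm{Ind}(F_p)$ in the slope equality and is not assumed in the upper-bound theorem.)
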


In particular, we have the slope equality and the upper bound of the slope for bielliptic fibrations of genus $g\ge 6$.
We remark that the upper bounds in Theorem~\ref{introh1upperthm} are ``fiberwise'' sharp as we shall see in Example~\ref{fiberexa}. 
We do not know, however, whether there exist primitive cyclic covering fibrations of type $(g,1,n)$ whose slopes attain the bounds.

It remains to investigate the case that $h=0$ and $n\le 3$.
The above method also applies to this case and we obtain the following:

\begin{thm} \label{introh0upperthm}
Let $f\colon S\to B$ be a primitive cyclic covering fibration of type $(g,0,n)$

\smallskip

\noindent
$(1)$ If $g=4$ and $n=3$, then $\lambda_f \le 129/17$
with the equality sign holding if and only if any singular fiber of $f$ is a triple fiber.

\smallskip

\noindent
$(2)$ Assume that $n=3$. If either $g>4$, or $g=4$ and $f$ has no triple fibers, then

$$
\lambda_f\le 12-\frac{72(g+1)}{4g^2+g+13-36\delta},
$$
where $\delta=0$ if $g+2\in 6\mathbb{Z}$ and $\delta=1$ otherwise.

\smallskip

\noindent
$(3)$ If $n=2$, then
$$
\lambda_f \le 12-\frac{4(2g+1)}{g^2-1+\delta}=12-\frac{4(2g+1)}{g^2+\frac{(-1)^g-1}{2}},
$$
where $\delta=0$ if $g$ is odd and $\delta=1$ if $g$ is even, i.e., $2\delta=1+(-1)^g$.

\end{thm}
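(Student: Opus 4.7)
The plan is to deduce Theorem~\ref{introh0upperthm} in parallel with Theorem~\ref{introh1upperthm}. First, I would invoke a slope equality for type $(g,0,n)$ fibrations (an analog of Theorem~\ref{introh1slopeeqthm}, either borrowed from \cite{enoki} directly or derived by the same method) that writes
$$
K_f^2 = \alpha_{g,n}\,\chi_f + \sum_{p\in B}\mathrm{Ind}(F_p),
$$
with non-negative rational indices $\mathrm{Ind}(F_p)$ vanishing on general fibers. The desired upper bound on $\lambda_f$ will then follow by bounding $\sum_p\mathrm{Ind}(F_p)$ from above in terms of $\chi_f$, possibly converted via Noether's formula $12\chi_f=K_f^2+e_f$ into a bound on the Euler-characteristic defect $e_f$.

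Second, since $h=0$, every vertical component of the branch locus on a relatively minimal ruled-surface model is a smooth rational curve, so the singular-fiber configurations of the branch divisor form a finite, essentially combinatorial list indexed by the multiplicities and infinitely near structure of the horizontal branch locus along the fiber. For each such configuration I would compute $\mathrm{Ind}(F_p)$ together with its contribution to the Euler-characteristic defect, identify the extremal ratio, and thereby obtain the required linear upper bound on $\mathrm{Ind}(F_p)$. The congruence constants $\delta$ in parts~(2) and~(3) enter through the divisibility requirement that the branch divisor lie in $n\cdot\mathrm{Pic}$; this forces the horizontal degree of the branch locus to satisfy a parity constraint (for $n=2$) or a mod-$6$ constraint (for $n=3$) that pins down the minimal positive value of a relevant intersection number, producing precisely the $\delta$-correction in the sharp constant. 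For the exceptional case $g=4$, $n=3$ in~(1), a triple fiber of $f$ corresponds to a branch-locus configuration containing a vertical component of multiplicity~$3$; a direct computation of $\mathrm{Ind}(F_p)$ on such a fiber then shows that the extremum $\lambda_f=129/17$ is attained precisely when every singular fiber of $f$ is of this type, yielding the ``if and only if'' clause.

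The main obstacle I anticipate is the case-by-case enumeration of singular-fiber types in the second step, especially for $n=3$, where the possible branch-locus singularities (higher tangencies, cusps, vertical components of various multiplicities, and their mutual adjacencies) are substantially richer than in the classical hyperelliptic setting. Identifying the correct extremal configuration and verifying that the $\delta$-correction agrees exactly with the output of the local minimisation is the most delicate part; once the local bounds are in place, the global assembly via the slope equality and Noether's formula should be essentially mechanical, mirroring the endgame of Theorem~\ref{introh1upperthm}.
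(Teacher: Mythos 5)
Your global skeleton is the right one and matches the paper's: localize $K_f^2$, $\chi_f$, $e_f$ on fiber germs via the singularity indices (the paper quotes these local expressions from \cite{enoki} at the start of \S4), convert the desired bound through Noether's formula into the statement $e_f(F_p)-\mu\chi_f(F_p)\ge 0$ for each germ, and observe that the $\delta$-corrections are tied to the divisibility of $r$ by $2n$. However, the core of your plan --- step two, a ``finite, essentially combinatorial list'' of branch-locus configurations on which one computes $\mathrm{Ind}(F_p)$ and identifies the extremal ratio --- is not viable as stated, and it is precisely where all the work in the paper lies. The set of fiber germs is not finite (a single fiber can carry arbitrarily many singular points of the branch curve, in arbitrarily deep infinitely near constellations), and, more importantly, the local quantity $e_f(F_p)-\mu\chi_f(F_p)$ is a linear combination of the indices $\alpha_0^+$, $\alpha'_k$, $\alpha''_k$, $j_{0,a}$, $\eta_p$, $\kappa$, $\varepsilon$ in which several coefficients are \emph{negative} (notably those of $j_{0,1}(F_p)=\varepsilon(F_p)$ and of $\kappa(F_p)$). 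One therefore cannot certify non-negativity configuration-by-configuration; one needs comparison inequalities that couple the different indices within one fiber. These are exactly Lemmas~\ref{indexlem}, \ref{h0etalem} and \ref{n2kappalem}: e.g.\ the bound $j_{0,1}(F_p)\le 2\eta_p+1+\sum_{a\ge 2}(2a-1)j_{0,a}(F_p)$ for $n=3$, proved by an induction on the multiplicity of a singular point controlling how many $3\mathbb{Z}+1$-type infinitely near singularities can lie over it, and the bound $\kappa(F_p)\le \frac{2}{3}\sum_{a\ge2}(a-1)j_{0,a}(F_p)$ for $n=2$. Your proposal contains no substitute for these, and without them the negative terms cannot be absorbed. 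The characterization of equality in part (1) likewise falls out of the equality analysis of Lemma~\ref{h0etalem}(1) (forcing $r=6$, a triple fiber, and a specific singularity diagram), not of a direct computation on ``a vertical component of multiplicity $3$''.

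Two smaller inaccuracies. First, the source of $\delta$ is not a parity constraint on the horizontal degree pinning down ``the minimal positive value of a relevant intersection number'': in the paper the coefficient of $\alpha''_k(F_p)$ is the quadratic $Q(k)=\mu'\bigl(n(n+1)(k-\tfrac{r}{2n})^2-\tfrac{n(n+1)\delta}{4}\bigr)$, and $\delta$ is exactly the correction needed to make $Q(k)\ge 0$ for all \emph{integers} $k$, i.e.\ it measures the distance from $r/2n$ to $\mathbb{Z}$ (which is $0$ or $1/2$ according to whether $r\in 2n\mathbb{Z}$). Second, the remark in the introduction that the classification of singular fibers of type $(4,0,3)$ in \cite{enoki2} ``may be used to reprove'' part (1) shows that your enumeration strategy is only known to work in that one low-genus case; for the general statement the paper's index-inequality method is the argument, not a shortcut around it.
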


We obtained the complete classification of singular fibers of primitive cyclic covering fibrations of type $(4,0,3)$ in \cite{enoki2} and computed some local invariants for each singular fiber.
One may use it to reprove Theorem~\ref{introh0upperthm} (1).
Note also that Theorem~\ref{introh0upperthm} (3) provides a new proof of Xiao's upper bound for hyperelliptic fibrations in \cite{xiao_book} referred above.
It is shown in \cite{liutan} that the inequality is optimal for given $g$.

The organization of the paper is as follows.
In \S1, we recall basic results from \cite{enoki} and \cite{enoki2} on primitive cyclic covering fibrations and introduce some notation for the later use.
In \S2, we observe the local concentration of relative invariants of primitive cyclic covering fibrations of type $(g,1,n)$ on a finite number of fiber germs and show Theorem~\ref{introh1slopeeqthm}.
\S3 will be devoted to the proof of Theorem~\ref{introh1upperthm}.
In the course of the study, we freely use Kodaira's table of singular fibers of elliptic surfaces.
Finally in \S4, we show Theorem~\ref{introh0upperthm}.

\begin{acknowledgement} \normalfont
The author expresses his sincere gratitude to Professor Kazuhiro Konno for many suggestions and warm encouragement.

\end{acknowledgement}

\section{Preliminaries}
In this section, we recall and state basic results for primitive cyclic covering fibrations in \cite{enoki}.

\begin{defn}\normalfont \label{primdef}
A relatively minimal fibration $f\colon S\to B$ of genus $g\geq 2$ is called a 
primitive cyclic covering fibration of type $(g,h,n)$, if there exist a (not 
necessarily relatively minimal) fibration 
$\widetilde{\varphi}\colon \widetilde{W}\to B$ of genus $h\geq 0$, and a 
 classical $n$-cyclic covering 
$$
\widetilde{\theta}\colon \widetilde{S}=
\mathrm{Spec}_{\widetilde{W}}\left(\bigoplus_{j=0}^{n-1}
\mathcal{O}_{\widetilde{W}}(-j\widetilde{\mathfrak{d}})\right)\to \widetilde{W}
$$ 
branched over a smooth curve $\widetilde{R}\in |n\widetilde{\mathfrak{d}}|$
for some $n\geq 2$ and $\widetilde{\mathfrak{d}}\in 
\mathrm{Pic}(\widetilde{W})$ 
such that 
$f$ is the relatively minimal 
model of $\widetilde{f}:=\widetilde{\varphi}\circ \widetilde{\theta}$.
\end{defn}

Let $f\colon S\to B$ be a primitive cyclic covering fibration of type $(g,h,n)$. We freely use the notation in Definition \ref{primdef}. Let $\widetilde{F}$ and $\widetilde{\Gamma}$ be general fibers of 
$\widetilde{f}$ and $\widetilde{\varphi}$, respectively. Then the restriction map 
$\widetilde{\theta}|_{\widetilde{F}}\colon \widetilde{F}\to 
\widetilde{\Gamma}$ is a classical $n$-cyclic covering branched over 
$\widetilde{R}\cap \widetilde{\Gamma}$. Since the genera of $\widetilde{F}$ and $\widetilde{\Gamma}$ are $g$ and 
$h$, respectively, the Hurwitz formula gives us
\begin{equation}\label{r}
r:=\widetilde{R}\widetilde{\Gamma}=\frac{2(g-1-n(h-1))}{n-1}.
\end{equation}
Note that $r$ is a multiple of $n$. Let $\widetilde{\sigma}$ be a generator of $\mathrm{Aut}(\widetilde{S}/\widetilde{W})
\simeq \mathbb{Z}/n\mathbb{Z}$ and $\rho\colon \widetilde{S}\to S$ the natural 
birational morphism. By assumption, $\mathrm{Fix}(\widetilde{\sigma})$ is a disjoint 
union of smooth curves and $\widetilde{\theta}(\mathrm{Fix}(\widetilde{\sigma}))=\widetilde{R}$. Let $\varphi\colon W\to B$ be a relatively minimal model of 
$\widetilde{\varphi}$ and $\widetilde{\psi}\colon \widetilde{W}\to W$ the 
natural birational morphism. Since $\widetilde{\psi}$ is a succession of blow-ups, 
we can write $\widetilde{\psi}=\psi_1\circ \cdots \circ \psi_N$, where 
$\psi_i\colon W_i\to W_{i-1}$ denotes the blow-up at $x_i\in W_{i-1}$ 
$(i=1,\dots,N)$ with $W_0=W$ and $W_N=\widetilde{W}$. We define reduced curves $R_i$ on $W_i$ inductively as $R_{i-1}=(\psi_i)_*R_i$ starting 
from $R_N=\widetilde{R}$ down to $R_0=:R$. We also put $E_i=\psi_i^{-1}(x_i)$ and 
$m_i=\mathrm{mult}_{x_i}(R_{i-1})$ for $i=1,2,\dots, N$.

\begin{lem}\label{multlem}
With the above notation, the following hold for any $i=1,\dots, N$.

\smallskip

\noindent
$(1)$ Either $m_i\in n\mathbb{Z}$ or $m_i\in n\mathbb{Z}+1$.
Moreover, $m_i\in n\mathbb{Z}$ holds if and only if $E_i$ is not contained in $R_i$.

\smallskip

\noindent
$(2)$ 
 $R_i=\psi_i^*R_{i-1}-n\displaystyle{\left[\frac{m_i}{n}\right]}E_i$, 
 where $[t]$ is the greatest integer not exceeding $t$.

\smallskip

\noindent
$(3)$ There exists $\mathfrak{d}_i\in \mathrm{Pic}(W_i)$ such that 
$\mathfrak{d}_i=\psi_i^*\mathfrak{d}_{i-1}$ and $R_i\sim n\mathfrak{d}_i$, $\mathfrak{d}_N=\widetilde{\mathfrak{d}}$.
\end{lem}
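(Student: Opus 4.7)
The plan is to prove all three assertions simultaneously by downward induction on $i$. The base case $i=N$ is immediate: $R_N=\widetilde{R}$ is smooth and $R_N\sim n\widetilde{\mathfrak{d}}$ by the very construction of the cyclic cover, so we set $\mathfrak{d}_N=\widetilde{\mathfrak{d}}$.

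For the inductive step, suppose we have produced $\mathfrak{d}_i\in\mathrm{Pic}(W_i)$ with $R_i\sim n\mathfrak{d}_i$. The main tool is the standard direct-sum decomposition
\begin{equation*}
\mathrm{Pic}(W_i)=\psi_i^{*}\mathrm{Pic}(W_{i-1})\oplus \mathbb{Z}[E_i],
\end{equation*}
which lets me write $\mathfrak{d}_i\sim \psi_i^{*}\mathfrak{d}_{i-1}+bE_i$ for a unique integer $b$ and class $\mathfrak{d}_{i-1}\in\mathrm{Pic}(W_{i-1})$. Separately, since $R_i$ is reduced and $(\psi_i)_{*}R_i=R_{i-1}$, the coefficient of $E_i$ in $\psi_i^{*}R_{i-1}-R_i$ is $m_i$ when $E_i\not\subset R_i$ and $m_i-1$ when $E_i\subset R_i$ (reducedness forces the $E_i$-coefficient of $R_i$ to be exactly $1$ in the latter case). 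In either case, $R_i=\psi_i^{*}R_{i-1}-kE_i$ for some integer $k\in\{m_i,m_i-1\}$.

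Comparing these two expressions for $R_i$ in the Picard decomposition above yields
\begin{equation*}
\psi_i^{*}(R_{i-1}-n\mathfrak{d}_{i-1})\sim (k+nb)E_i,
\end{equation*}
whose left side lies in $\psi_i^{*}\mathrm{Pic}(W_{i-1})$ and right side in $\mathbb{Z}[E_i]$. Since these subgroups meet only in $0$ (because $E_i\cdot \psi_i^{*}D=0$ for all $D$ while $E_i^{2}=-1$), both sides must vanish. I obtain $n\mid k$, which in view of $k\in\{m_i,m_i-1\}$ translates into $m_i\equiv 0$ or $m_i\equiv 1\pmod{n}$ according to whether $E_i\not\subset R_i$ or $E_i\subset R_i$: this is assertion (1). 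Simultaneously, $R_{i-1}-n\mathfrak{d}_{i-1}\sim 0$ by injectivity of $\psi_i^{*}$, propagating the induction and producing (3). Finally, since $k=n[m_i/n]$ in both cases, assertion (2) drops out of $R_i=\psi_i^{*}R_{i-1}-kE_i$.

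The only delicate point I anticipate is keeping the reducedness of $R_i$ properly in view, since it is exactly this constraint that pins down the $E_i$-coefficient of $R_i$ to $0$ or $1$ and thereby produces the clean dichotomy in (1). Once this case split is codified, the remainder is a routine bookkeeping argument in the Picard group of a blow-up, requiring no further geometric input beyond the smoothness of $\widetilde{R}$ and its equivalence to $n\widetilde{\mathfrak{d}}$.
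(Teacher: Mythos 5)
Your proof is correct, and since the paper states Lemma~\ref{multlem} without proof (recalling it from \cite{enoki}), there is nothing to diverge from: your descending induction through the decomposition $\mathrm{Pic}(W_i)=\psi_i^{*}\mathrm{Pic}(W_{i-1})\oplus\mathbb{Z}[E_i]$, with the reducedness of $R_i$ pinning the $E_i$-coefficient to $0$ or $1$, is exactly the standard argument. One remark: what your induction actually produces is $\mathfrak{d}_i=\psi_i^{*}\mathfrak{d}_{i-1}-\left[\frac{m_i}{n}\right]E_i$, which is the intended content of part (3) (it is what feeds \eqref{delta}); the bare equality $\mathfrak{d}_i=\psi_i^{*}\mathfrak{d}_{i-1}$ printed in the statement is a typo, being incompatible with part (2) whenever $\left[\frac{m_i}{n}\right]\neq 0$.
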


Let $E$ be a $(-1)$-curve on a fiber of $\widetilde{f}$. 
If $E$ is not contained in ${\rm Fix}(\widetilde{\sigma})$, then $L:=\widetilde{\theta}(E)$ is a $(-1)$-curve and $\widetilde{\theta}^{\ast}L$ is the sum of $n$ disjoint $(-1)$-curves containing $E$. 
Contracting them and $L$, we may assume that any $(-1)$-curve on a fiber of $\widetilde{f}$ is contained in ${\rm Fix}(\widetilde{\sigma})$. 
Then $\widetilde{\sigma}$ induces an automorphism $\sigma$ of $S$ over $B$ and $\rho$ is the blow-up of all isolated fixed points of $\sigma$ (cf.\ \cite{enoki}). 
One sees easily that there is a one-to-one correspondence between $(-k)$-curves contained in ${\rm Fix}(\widetilde{\sigma})$ and $(-kn)$-curves contained in $\widetilde{R}$ via $\widetilde{\theta}$. Hence, the number of blow-ups in $\rho$ is that of vertical $(-n)$-curves contained in $\widetilde{R}$.

From Lemma \ref{multlem}, we have
\begin{eqnarray}
K_{\widetilde{\varphi}}&=
&\widetilde{\psi}^{\ast}K_{\varphi}+\sum_{i=1}^N {\bf E}_i \label{kphi},\\
\widetilde{\mathfrak{d}}&=
&\widetilde{\psi}^{\ast}\mathfrak{d}-\sum_{i=1}^N \left[\frac{m_i}{n}\right]{\bf E}_i, \label{delta}
\end{eqnarray}
where ${\bf E}_i$ denotes the total transform of $E_i$.
Since
$$
K_{\widetilde{S}}=
\displaystyle{\widetilde{\theta}^{\ast}\left(K_{\widetilde{W}}+(n-1)\widetilde{\mathfrak{d}}\right)}
$$
and
$$
\chi(\mathcal{O}_{\widetilde{S}})=
n\chi(\mathcal{O}_{\widetilde{W}})+\frac{1}{2}\sum_{j=1}^{n-1}j\widetilde{\mathfrak{d}}(j\widetilde{\mathfrak{d}}+K_{\widetilde{W}}),
$$
we get
\begin{eqnarray}
K_{\widetilde{f}}^2&=
&n(K_{\widetilde{\varphi}}^2+2(n-1)K_{\widetilde{\varphi}}\widetilde{\mathfrak{d}}+(n-1)^2\widetilde{\mathfrak{d}}^2), \label{kftilde} \\
\chi_{\widetilde{f}}&=
&n\chi_{\widetilde{\varphi}}+\frac{1}{2}\sum_{j=1}^{n-1}j\widetilde{\mathfrak{d}}(j\widetilde{\mathfrak{d}}+K_{\widetilde{\varphi}}). \label{chiftilde}
\end{eqnarray}

\begin{defn}[Singularity index $\alpha$]\label{sinddef}
Let $k$ be a positive integer.
For $p\in B$, we consider all the singular points (including infinitely near 
 ones) of $R$ on the fiber $\Gamma_p$ of $\varphi\colon W\to B$ over $p$.
We let $\alpha_k(F_p)$ be the number of singular points of multiplicity 
 either $kn$ or $kn+1$ among them, and call it the {\em $k$-th singularity 
 index} of $F_p$, the fiber of $f\colon S\to B$ over $p$.
Clearly, we have $\alpha_k(F_p)=0$ except for a finite number of $p\in B$.
We put $\alpha_k=\sum_{p\in B}\alpha(F_p)$ and call it the {\em $k$-th 
 singularity index} of $f$.

Let $D_1$ be the sum of all $\widetilde{\varphi}$-vertical $(-n)$-curves 
 contained in $\widetilde{R}$ and put $\widetilde{R}_0=\widetilde{R}-D_1$.
We denote by $\alpha_0(F_p)$ the ramification index of 
 $\widetilde{\varphi}|_{\widetilde{R}_0}\colon \widetilde{R}_0\to B$ over $p$, 
 that is, the ramification index of 
 $\widetilde{\varphi}|_{(\widetilde{R}_0)_h}\colon (\widetilde{R}_0)_h\to B$ 
 over $p$ minus the sum of the topological Euler number of irreducible 
 components of $(\widetilde{R}_0)_v$ over $p$.
Then $\alpha_0(F_p)=0$ except for a finite number of $p\in B$, and we have
$$
\sum_{p\in B}\alpha_0(F_p)=(K_{\widetilde{\varphi}}+\widetilde{R}_0)\widetilde{R}_0
$$
by definition.
We put $\alpha_0=\sum_{p\in B}\alpha(F_p)$ and call it the {\em $0$-th singularity index} of $f$.
\end{defn}

Let $\varepsilon(F_p)$ be the number of $(-1)$-curves contained in $\widetilde{F}_p$, 
and put $\varepsilon=\sum_{p\in B}\varepsilon(F_p)$.
This is no more than the number of blowing-ups appearing in 
$\rho\colon \widetilde{S}\to S$.

From \eqref{kphi} and \eqref{delta}, we have
\begin{align}
(K_{\widetilde{\varphi}}+\widetilde{R})\widetilde{R}=&\;
\left(\widetilde{\psi}^*(K_\varphi+R)+\sum_{i=1}^N
\left(1-n\left[\frac{m_i}{n}\right]\right)\mathbf{E}_i\right)
\left(\widetilde{\psi}^*R-n\left[\frac{m_i}{n}\right]\mathbf{E}_i\right) \nonumber \\
=&\;(K_\varphi+R)R-\sum_{i=1}^N n\left[\frac{m_i}{n}\right]
\left(n\left[\frac{m_i}{n}\right]-1\right) \nonumber \\
=&\;(K_\varphi+R)R-n\sum_{k\ge 1}k(nk-1)\alpha_k. \label{2cal1}
\end{align}
On the other hand, we have
\begin{equation}\label{2cal2}
(K_{\widetilde{\varphi}}+\widetilde{R})\widetilde{R}=(K_{\widetilde{\varphi}}+\widetilde{R}_0)\widetilde{R}_0
+D_1(K_{\widetilde{\varphi}}+D_1)=\alpha_0-2\varepsilon.
\end{equation}
Hence,
\begin{equation} \label{KReq}
(K_\varphi+R)R=n\sum_{k\ge 1}k(nk-1)\alpha_k+\alpha_0-2\varepsilon.
\end{equation}
by \eqref{2cal1} and \eqref{2cal2}.
Since $K_{f}^2=K_{\widetilde{f}}^2+\varepsilon$, $\chi_{\widetilde{f}}=\chi_f$, \eqref{kphi}, \eqref{delta}, \eqref{kftilde} and \eqref{chiftilde}, we get
\begin{align} \label{kfeq}
K_{f}^2
=&\;nK_{\varphi}^2+2(n-1)K_{\varphi}R+\frac{(n-1)^2}{n}R^2-
\sum_{k\ge 1}((n-1)k-1)^2\alpha_k+\varepsilon
\end{align}
and
\begin{align} \label{chifeq}
\chi_f
=&\;n\chi_{\varphi}+\frac{(n-1)(2n-1)}{12n}R^2+\frac{n-1}{4}K_{\varphi}R
-\frac{n(n-1)}{12}\sum_{k\ge 1}((2n-1)k^2-3k)\alpha_k.
\end{align}
From \eqref{KReq}, \eqref{kfeq}, \eqref{chifeq} and Noether's formula, we have
\begin{equation} \label{efeq}
e_f=ne_{\varphi}+n\sum_{k\ge 1}\alpha_k+(n-1)\alpha_0-(2n-1)\varepsilon.
\end{equation}

We define some notation for the later use. For a vertical divisor $T$ and $p\in B$, we denote by $T(p)$ the 
greatest subdivisor of $T$ consisting of components of the fiber over $p$.
Then $T=\sum_{p\in B}T(p)$.
We consider a family $\{L^{i}\}_i$ of vertical irreducible curves in 
$\widetilde{R}$ over $p$ satisfying:

\smallskip

(i) $L^{1}$ is the proper transform of an irreducible curve $\Gamma^1$ contained in the fiber $\Gamma_p$ or a 
$(-1)$-curve $E^{1}$ appearing in $\widetilde{\psi}$.

\smallskip

(ii) For $i\geq 2$, $L^{i}$ is the proper transform of an irreducible curve $\Gamma^i$ contained in the fiber $\Gamma_p$ intersecting $\Gamma^{k}$ for some $k<i$
or an exceptional 
$(-1)$-curve $E^{i}$ that contracts to a point $x^{i}$ on $C^{k}$ 
(or on its proper transform) for some $k<i$, where we define $C^{j}$ to be 
$E^{j}$ or $\Gamma^{j}$ according to whether $L^{j}$ is the proper 
transform of which curve.

\smallskip

(iii) $\{L^{i}\}_i$ is the largest among those satisfying (i) and (ii).

\medskip

The set of all vertical irreducible curves in $\widetilde{R}$ over $p$ 
is decomposed into the disjoint union of such families uniquely.
We denote it as 
$$
\widetilde{R}_v(p)=D^1(p)+\cdots+D^{\eta_p}(p), \quad D^t(p)=\sum_{k\ge 1}L^{t,k}
$$
where $\eta_p$ denotes the number of the decomposition and $\{L^{t,k}\}_{k}$ satisfies 
(i), (ii), (iii). 
Let $C^{t,k}$ be the exceptional curve or the component of the fiber $\Gamma_p$ the proper transform of which is $L^{t,k}$.
Let $D'^t(p)$ be the sum of all irreducible components of $D^t(p)$ which are the proper transforms of curves contained in $\Gamma_p$ and $D''^t(p)=D^t(p)-D'^t(p)$.
Let $\eta'_p$ be the cardinality of the set $\{t=1,\dots,\eta_p|D'^t(p)\neq0\}$ and $\eta''_p=\eta_p-\eta'_p$.

\begin{defn}[Index $j$]
Let $j_{b,a}(F_p)$ (resp. $j_{b,a}^t(F_p)$, $j'^t_{b,a}(F_p)$, $j''^t_{b,a}(F_p)$) be the number of irreducible curves of genus $b$ with self-intersection number $-an$ contained in $\widetilde{R}_v(p)$ (resp. $D^t(p)$, $D'^t(p)$, $D''^t(p)$).
Put 
$$
j^{t}_{\bullet,a}(F_p)=\sum_{b\ge 0}j^{t}_{b,a}(F_p),\quad j^{t}_{b,\bullet}(F_p)=\sum_{a\ge 0}j^{t}_{b,a}(F_p), \quad j_{b,a}(F_p)=\sum_{t\ge 1}j^{t}_{b,a}(F_p).
$$
Similarly, we define $j^{t}(F_p)=j^{t}_{\bullet,\bullet}(F_p)$, $j'^{t}_{\bullet,a}(F_p)$, $j''^{t}_{\bullet,a}(F_p)$, etc.
Clearly, we have $j''_{b,\bullet}(F_p)=0$ for any $b\ge 1$ by the definition of $D''^t(p)$.

Rearranging the index if necessary, we may assume that 
$D'^t(p)=\sum_{k=1}^{j'^{t}(F_p)}L^{t,k}$, $D''^t(p)=\sum_{k=j'^{t}(F_p)+1}^{j^{t}(F_p)}L^{t,k}$.
Put $L'^{t,k}=L^{t,k}$, $L''^{t,k}=L^{t,j'^t(F_p)+k}$, $C'^{t,k}=C^{t,k}$, $C''^{t,k}=C^{t,j'^t(F_p)+k}$.

Let $\alpha_0^{+}(F_p)$ be the ramification index of 
$\widetilde{\varphi}:\widetilde{R}_h\to B$ over $p$ and put $\alpha_0^{-}(F_p)=\alpha_0(F_p)-\alpha_0^{+}(F_p)$.
It is clear that $\varepsilon(F_p)=j_{0,1}(F_p)$ and $\alpha_0^{-}(F_p)=\sum_{b\ge 0}(2b-2)j_{b,\bullet}(F_p)+2\varepsilon(F_p)$.

Let $\overline{\eta}_p$ be the number of $t=1,\ldots,\eta_p$ such that $j^{t}(F_p)=j''^{t}_{0,1}(F_p)$ and $\widehat{\eta}_p=\eta''_p-\overline{\eta}_p$.
\end{defn}

\begin{defn}[Vertical type singularity]
Let $x$ be a singular point of $R$. For $t=1,\dots, \eta_p$ and $u\ge 1$, $x$ is a {\em $(t,u)$-vertical type singularity} or simply a $u$-vertical type singularity if the number of $C^{t,k}$'s whose proper transforms pass through $x$ is $u$.
If $x$ is a $(t,u)$-vertical type singularity and the multiplicity of it belongs to $n\mathbb{Z}$ (resp. $n\mathbb{Z}+1$), 
we call it a $(t,u)$-vertical $n\mathbb{Z}$ type singularity (resp. $(t,u)$-vertical $n\mathbb{Z}+1$ type singularity).

Let $\iota^{t,(u)}(F_p)$, $\kappa^{t,(u)}(F_p)$ respectively be the number of $(t,u)$-vertical $n\mathbb{Z}$, $n\mathbb{Z}+1$ type singularities over $p$
and put 
$$
\iota^{t}(F_p)=\sum_{u\ge1}(u-1)\iota^{t,(u)}(F_p), \quad \kappa^{t}(F_p)=\sum_{u\ge1}(u-1)\kappa^{t,(u)}(F_p),
$$
$\iota(F_p)=\sum_{t=1}^{\eta_p}\iota^{t}(F_p)$ and $\kappa(F_p)=\sum_{t=1}^{\eta_p}\kappa^{t}(F_p)$.
Let $\iota^{t,(u)}_{k}(F_p)$, $\kappa^{t,(u)}_{k}(F_p)$ respectively be the number of $(t,u)$-vertical type singularities with multiplicity $kn$, $kn+1$ and we define $\iota^{t}_{k}(F_p)$, $\kappa^{t}_{k}(F_p)$, $\iota_{k}(F_p)$ and $\kappa_{k}(F_p)$ similarly.
\end{defn}

\begin{defn}[Indices $\alpha'$, $\alpha''$]
We say that a singular point $x$ of $R$ is {\em involved in} $D^t(p)$ if there exists $C^{t,k}$ such that it or its proper transform passes through $x$ or it contracts to $x$.
A singular point $x$ of $R$ is involved in $\widetilde{R}_v(p)$ if it is involved in $D^t(p)$ for some $t$.
Let $\alpha'_k(F_p)$ (resp. $\alpha''_k(F_p)$) denotes the number of singularities with multiplicity $kn$ or $kn+1$ over $p$ not involved in $\widetilde{R}_v(p)$ (resp. involved in $\widetilde{R}_v(p)$).
Clearly, we have $\alpha_k(F_p)=\alpha'_k(F_p)+\alpha''_k(F_p)$.
Let $\alpha''^{t}_k(F_p)$ denotes the number of singularities with multiplicity $kn$ or $kn+1$ over $p$ involved in $D^t(p)$.
Then, we have $\alpha''_k(F_p)=\sum_{t=1}^{\eta_p}\alpha''^{t}_k(F_p)$ by the definition of the decomposition $\widetilde{R}_v(p)=D^{1}(p)+\cdots+D^{\eta_p}(p)$.
Let $\alpha^{n\mathbb{Z}}_k(F_p)$, $\alpha^{n\mathbb{Z}+1}_k(F_p)$ respectively denote the number of singularities with multiplicity $kn$, $kn+1$ over $p$. Similarly, we define $\alpha''^{n\mathbb{Z}}_k(F_p)$, $\alpha''^{n\mathbb{Z}+1}_k(F_p)$, etc.
\end{defn}

\begin{defn}[Singularity of type $(i\to i)$]
Suppose that $n=2$. 
If the exceptional curve $E_{x}$ of the blow-up at a singularity $x$ of $R$ with multiplicity $2k+1$ contains only one singularity $y$, then the multiplicity at $y$ is $2k+2$ and $E_{x}$ contributes to $j''_{0,1}(F_p)$. 
Conversely, the exceptional curve $E$ contributing to $j''_{0,1}(F_p)$ has such a pair $(x,y)$.
Then we call the pair $(x,y)$ a singularity of type $(2k+1\to 2k+1)$ (cf.\ \cite{pi1}).
%A pair $(x,y)$ of a singular point $x$ with multiplicity $2k+1$ and a singular point $y$ with multiplicity $2k+2$ infinitely near $x$ is called a $(2k+1\to 2k+1)$ singularity (cf.\ \cite{pi1}).
Let $\alpha_{(2k+1\to 2k+1)}(F_p)$ be the number of singularities of type $(2k+1\to 2k+1)$ over $p$ (i.e., $s_{2k+1}(F_p)$ in the notation of \cite{pi1}).
Then we have 
\begin{equation} \label{n2j''01eq}
j''_{0,1}(F_p)=\sum_{k\ge 1}\alpha_{(2k+1\to 2k+1)}(F_p).
\end{equation}
We decompose 
$$
\alpha_{(2k+1\to 2k+1)}(F_p)=\alpha^{\mathrm{tr}}_{(2k+1\to 2k+1)}(F_p)+\alpha^{\mathrm{co}}_{(2k+1\to 2k+1)}(F_p)
$$ and 
$$
\alpha^{\mathrm{co}}_{(2k+1\to 2k+1)}(F_p)=\alpha^{\mathrm{co},0}_{(2k+1\to 2k+1)}(F_p)+\alpha^{\mathrm{co},1}_{(2k+1\to 2k+1)}(F_p)
$$
 as follows.
Let $\alpha^{\mathrm{tr}}_{(2k+1\to 2k+1)}(F_p)$ be the number of  singularities of type $(2k+1\to 2k+1)$ over $p$ at which any local branch of $R_h$ intersects the fiber over $p$ transversely.
Let $\alpha^{\mathrm{co},0}_{(2k+1\to 2k+1)}(F_p)$ (resp.\ $\alpha^{\mathrm{co},1}_{(2k+1\to 2k+1)}(F_p)$) be the number of singularities $(x,y)$ of type $(2k+1\to 2k+1)$ over $p$ such that the proper transform of the vertical component passing through $x$ also passes through $y$ and is not contained in $R$ (resp.\ is contained in $R$).
%From Example~\ref{diagexa}~(1), we have 
%$$
%\overline{\eta}_p=\sum_{k\ge 1}\left(\alpha^{\mathrm{tr}}_{(2k+1\to 2k+1)}(F_p)+\alpha^{\mathrm{co},0}_{(2k+1\to 2k+1)}(F_p)\right).
%$$
\end{defn}

\begin{notation}
For a condition or a Roman numeral $\mathcal{P}$, 
we put $\delta_{\mathcal{P}}=1$ if the condition $\mathcal{P}$ holds or $\Gamma_p$ is a singular fiber of type $\mathcal{P}$, and $\delta_{\mathcal{P}}=0$ otherwise.
\end{notation}

Let $C=C^{t,k}$ and assume that it is smooth. If $C$ is on $W_i$, we drop the index and set $R=R_i$ for simplicity. 
Let $R^{\prime}=R-C$.
Let $x_1,\ldots,x_l$ be all the points of $C\cap R^{\prime}$.
We put $x_{i,1}=x_i$ and $m_{i,1}=m_i$. 
We define $\psi_{i,1}\colon W_{i,1}\rightarrow W$ to be the blow-up at $x_{i,1}$ and put $E_{i,1}=\psi_{i,1}^{-1}(x_{i,1})$ and $R_{i,1}=\psi_{i,1}^\ast R-n[m_{i,1}/n]E_{i,1}$. 
Inductively, we define $x_{i,j}$, $m_{i,j}$ to be the intersection point of the proper transform of $C$ and $E_{i,j-1}$, the multiplicity of $R_{i,j-1}$ at $x_{i,j}$, 
and if $m_{i,j}>1$, we define $\psi_{i,j}\colon W_{i,j}\to W_{i,j-1}$, $E_{i,j}$ and $R_{i,j}$ to be the blow-up at $x_{i,j}$, the exceptional curve for $\psi_{i,j}$ and $R_{i,j}=\psi_{i,j}^\ast R_{i,j-1}-n[m_{i,j}/n]E_{i,j}$, respectively. 
Put $i_{\mathrm{bm}}=\max\{j\mid m_{i,j}>1\}$, that is, the number of 
blowing-ups occuring over $x_i$.
We may assume that $i_{\mathrm{bm}}\geq (i+1)_{\mathrm{bm}}$ for 
$i=1,\dots,l-1$ after rearranging the index if necessary.
Put $t=R^{\prime}C$ and $c=\sum_{i=1}^l i_{\rm bm}$. 
If $C$ is a fiber $\Gamma$ of $\varphi$, $t$ is the number of branch points $r$. 
If $C$ is an exceptional curve, $t$ is the multiplicity of $R$ at the point to which $C$ is contracted. 
Clearly, $c$ is the number of blow-ups on $C$.
Set $d_{i,j}=[m_{i,j}/n]$. Then the following lemmas hold (cf.\ \cite{enoki}).

\begin{lem}[\cite{enoki}] \label{tclem}
We have
\begin{equation}
\frac{t+c}{n}=\sum_{i=1}^l \sum_{j=1}^{i_{\rm bm}} d_{i,j}.  \label{type1}
\end{equation}

\end{lem}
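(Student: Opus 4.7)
The plan is to compute $\widetilde{C} \cdot R_C$ in two different ways, where $\pi \colon W_C \to W$ denotes the composition of the $c$ blow-ups $\psi_{i,j}$ occurring over $C$, the curve $\widetilde{C}$ denotes the proper transform of $C$ on $W_C$, and $R_C := \pi^{\ast} R - n \sum_{i,j} d_{i,j} \mathbf{E}_{i,j}$ denotes the reduced branch divisor on $W_C$, with $\mathbf{E}_{i,j}$ the total transform on $W_C$ of the exceptional curve $E_{i,j}$. Iterating Lemma \ref{multlem}(2) verifies the formula for $R_C$.

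On the one hand, the projection formula gives $\widetilde{C} \cdot \pi^{\ast} R = C \cdot R = C^{2} + t$, and similarly $\widetilde{C} \cdot \mathbf{E}_{i,j} = C_{i,j} \cdot E_{i,j}$ on $W_{i,j}$, which equals $1$ since the smooth curve $C$ passes through each center $x_{i,j}$ with multiplicity one at every stage. Hence
\[
\widetilde{C} \cdot R_C \;=\; C^{2} + t - n \sum_{i,j} d_{i,j}.
\]

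On the other hand, I claim $\widetilde{C} \cdot R_C = \widetilde{C}^{2} = C^{2} - c$. Since each reduction step preserves the coefficient one of the proper transform of $C$ in $R_{i,j}$, I may write $R_C = \widetilde{C} + T$ with $T$ effective and not containing $\widetilde{C}$, and it suffices to show $\widetilde{C} \cdot T = 0$. Two potential sources of intersection on $W_C$ exist: proper transforms of components of $R - C$, or the last exceptional curve $E_{i, i_{\rm bm}}$ in each chain over $x_i$ (note that $\widetilde{C}$ meets no earlier exceptional curve in the chain, since in each successive blow-up the proper transforms of $C$ and of the previous exceptional curve are separated). The first source is excluded by the very termination condition $m_{i, i_{\rm bm} + 1} = 1$: no component of $R$ other than the proper transform of $C$ passes through $x_{i, i_{\rm bm} + 1}$. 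The second source is excluded as follows: if $E_{i, i_{\rm bm}} \subset R_C$, then Lemma \ref{multlem}(1) would force $m_{i, i_{\rm bm}} \in n\mathbb{Z} + 1$, and hence the multiplicity of $R_{i, i_{\rm bm}}$ at $x_{i, i_{\rm bm} + 1}$ would be at least $2$ (contributions of $1$ each from $\widetilde{C}$ and $E_{i, i_{\rm bm}}$), contradicting the maximality of $i_{\rm bm}$.

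Equating the two expressions for $\widetilde{C} \cdot R_C$ yields $t + c = n \sum_{i,j} d_{i,j}$, which is \eqref{type1}. The main subtlety is the last step of the second computation, where Lemma \ref{multlem}(1) is used to pin down the residue of $m_{i, i_{\rm bm}}$ modulo $n$ at the terminal stage of each chain; the remainder is standard intersection-theoretic bookkeeping. An essentially equivalent, slightly shorter alternative is induction on $c$: a single blow-up $\psi_{i,j}$ changes $t$ by $1 - n d_{i,j}$ and $c$ by $-1$, so both sides of the asserted identity decrease by the same amount.
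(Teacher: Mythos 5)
Your proof is correct, and it shares its skeleton with the paper's argument: the paper obtains Lemma~\ref{tclem} as the special case $k_i=1$ of Lemma~\ref{easylem}, whose proof likewise computes the intersection of the proper transform of $C$ with the reduced branch divisor in two ways via the projection formula. The genuine difference is where the computation stops, and hence how the disjointness step is justified. The paper passes all the way to $\widetilde{W}$, where $\widetilde{R}$ is smooth by hypothesis; there $L$ and $\widetilde{R}-L$ are disjoint for free, so $L^2=\widetilde{R}L$ is immediate and one only needs $L^2=C^2-c$ and $RC=C^2+t$ to recover \eqref{type1}. You stop at the intermediate surface $W_C$ obtained from the blow-ups over $C$ alone, where $R_C$ may still be singular away from $\widetilde{C}$, and must therefore check by hand that $\widetilde{C}$ meets no other component of $R_C$. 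Your check is sound: away from the chains nothing happens because $C\cap R'=\{x_1,\dots,x_l\}$ by definition, within each chain the earlier exceptional curves are separated from $\widetilde{C}$ after one further blow-up, and at the terminal point the condition $m_{i,i_{\mathrm{bm}}+1}=1$ (together with Lemma~\ref{multlem}(1), or equivalently Lemma~\ref{mijlem}(3), which forces $m_{i,i_{\mathrm{bm}}}\in n\mathbb{Z}$ and hence $E_{i,i_{\mathrm{bm}}}\not\subset R_C$) excludes everything else. What your route buys is a self-contained argument that makes visible exactly which combinatorial fact drives the identity; what the paper's route buys is that the disjointness comes from the global smoothness of the branch curve with no local analysis, and the same two-line computation yields the more general Lemma~\ref{easylem} for arbitrary, possibly singular, $C\subset R$ with weights $k_i=\mathrm{mult}_{x_i}(C)$.
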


This is a special case of the following lemma:

\begin{lem}\label{easylem}
Let $f\colon S\to B$ be a primitive cyclic covering fibration of type $(g,h,n)$.
Let $C$ be a curve contained in $R$, $L$ the proper transform of $C$ on $\widetilde{W}$
 and $x_1$, $x_2$,\dots, $x_c$ all the singularities of $R$ on $C$ $($including infinitely near ones $)$.
Put $m_i=\mathrm{mult}_{x_i}(R)$, $k_i=\mathrm{mult}_{x_i}(C)$ and $d_i=[m_i/n]$.
Then, we have
$$
\frac{RC-L^2}{n}=\sum_{i=1}^{c}k_id_i.
$$
\end{lem}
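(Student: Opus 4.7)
The plan is to compute $L^2$ by tracking the strict transform of $C$ through the sequence of blow-ups $\widetilde{\psi}=\psi_1\circ\cdots\circ\psi_N$ and invoking Lemma~\ref{multlem}(2). The strategy is local in nature: most of the blow-ups in $\widetilde{\psi}$ are irrelevant to $C$, and only the ones happening on (the strict transform of) $C$ will contribute.

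First I would prune $\widetilde{\psi}$. A blow-up whose center does not lie on the current strict transform of $C$ leaves the strict transform of $C$ unchanged, so it affects neither $L^2$ nor any intersection of the form $R^{(\ast)}\cdot C^{(\ast)}$; such blow-ups may simply be dropped. What remains is a sequence of blow-ups whose centers are the singularities of $R$ lying on $C$ or infinitely near $C$. Because $\widetilde{R}$ is smooth, \emph{every} such singularity must in fact be resolved in $\widetilde{\psi}$, so the list of pruned centers is exactly $x_1,\ldots,x_c$ as in the statement. Denote the blow-up at $x_j$ by $\pi_j$, its exceptional curve by $E_j$, and write $R^{(j)}$, $C^{(j)}$ for the strict transforms after the first $j$ relevant blow-ups, so that $R^{(0)}=R$, $C^{(0)}=C$, $R^{(c)}=\widetilde{R}$ in a neighborhood of $L$, and $C^{(c)}=L$.

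Next I would derive a one-step recursion. Lemma~\ref{multlem}(2) gives $R^{(j)}=\pi_j^{\ast}R^{(j-1)}-nd_jE_j$, and the definition of strict transform gives $C^{(j)}=\pi_j^{\ast}C^{(j-1)}-k_jE_j$. Using the projection formula together with $\pi_j^{\ast}(-)\cdot E_j=0$ and $E_j^2=-1$, we get
$$R^{(j)}\cdot C^{(j)} = R^{(j-1)}\cdot C^{(j-1)} - nk_jd_j.$$
Telescoping from $j=1$ to $j=c$ yields $\widetilde{R}\cdot L = R\cdot C - n\sum_{j=1}^{c}k_jd_j$.

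Finally, since $\widetilde{R}$ is smooth and $L$ is one of its irreducible components, $L$ is disjoint from the remaining components of $\widetilde{R}$, so $(\widetilde{R}-L)\cdot L=0$ and therefore $\widetilde{R}\cdot L=L^2$. Substituting into the displayed identity and rearranging gives $(RC-L^2)/n = \sum_{j=1}^{c}k_jd_j$, which is the claim. The only delicate point is the bookkeeping in the pruning step, ensuring that the phrase ``all singularities of $R$ on $C$, including infinitely near ones'' really enumerates exactly the relevant blow-up centers — this is precisely what smoothness of $\widetilde{R}$ forces.
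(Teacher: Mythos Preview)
Your proof is correct and follows essentially the same approach as the paper: compute $\widetilde{R}\cdot L$ via the blow-up formulas (the paper does this in one shot using total transforms $\mathbf{E}_i$ rather than your step-by-step telescoping, but it is the same computation), then use that $L$ is a connected component of the smooth curve $\widetilde{R}$ to conclude $\widetilde{R}\cdot L=L^2$.
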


\begin{proof}
We may assume that $\psi_i$ is the blow-up at $x_i$ for $i=1,\dots, c$. 
Then, we can write $L=\widetilde{\psi}^{*}C-\sum_{i=1}^{c}k_i\mathbf{E}_i$ 
 and $\widetilde{R}=\widetilde{\psi}^{*}R-\sum_{i=1}^{N}nd_i\mathbf{E}_i$.
Thus, we have $\widetilde{R}L=RC-\sum_{i=1}^{c}nk_id_i$.
On the other hand, since $\widetilde{R}-L$ and $L$ are disjoint, we have $L^2=\widetilde{R}L$.
From these equalities, the assertion follows.
\end{proof}

\begin{lem} [\cite{enoki}] \label{mijlem}
The following hold.

\smallskip

\noindent
$(1)$ When $n\ge 3$, then $m_{i,j}\ge m_{i,j+1}$. 
When $n=2$, then $m_{i,j}+1\ge m_{i,j+1}$ with equality holds only if $m_{i,j-1}\in 2\mathbb{Z}$ $($if $j>1)$ and $m_{i,j}\in 2\mathbb{Z}+1$.

\smallskip

\noindent
$(2)$ If $m_{i,j-1}\in n\mathbb{Z}+1$ and $m_{i,j}\in n\mathbb{Z}$, then $m_{i,j} > m_{i,j+1}$.

\smallskip

\noindent
$(3)$ $m_{i,i_{\mathrm{bm}}}\in n\mathbb{Z}$.
\end{lem}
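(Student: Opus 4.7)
The plan is to analyze the blow-up sequence stage by stage via the decomposition $R_{i,j} = \tilde R_{i,j-1} + \epsilon_j E_{i,j}$, where $\tilde R_{i,j-1}$ denotes the proper transform of $R_{i,j-1}$ under $\psi_{i,j}$ and $\epsilon_j := m_{i,j} - n[m_{i,j}/n] \in \{0,1\}$ (which is $0$ if $m_{i,j}\in n\mathbb{Z}$ and $1$ if $m_{i,j}\in n\mathbb{Z}+1$ by Lemma~\ref{multlem}(1)). Since $x_{i,j+1}\in E_{i,j}$ and $E_{i,j}$ is smooth, this gives
\[
m_{i,j+1} = \mathrm{mult}_{x_{i,j+1}}(\tilde R_{i,j-1}) + \epsilon_j.
\]
The universal bound $\mathrm{mult}_{x_{i,j+1}}(\tilde R_{i,j-1}) \le I_{x_{i,j+1}}(\tilde R_{i,j-1},E_{i,j}) \le \tilde R_{i,j-1}\cdot E_{i,j} = m_{i,j}$ at once yields $m_{i,j+1}\le m_{i,j}+1$, which is precisely the inequality in (1) for $n=2$.

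For (1) with $n\ge 3$, Lemma~\ref{multlem}(1) forces $m_{i,j+1}\in n\mathbb{Z}\cup(n\mathbb{Z}+1)$. If the equality $m_{i,j+1}=m_{i,j}+1$ held, then necessarily $\epsilon_j=1$ and $m_{i,j}=nk+1$, so $m_{i,j+1}=nk+2$, which does not lie in $n\mathbb{Z}\cup(n\mathbb{Z}+1)$ when $n\ge 3$. Hence $m_{i,j+1}\le m_{i,j}$. For assertion (2), and for the equality case of (1) when $n=2$, the key geometric observation is: when $m_{i,j-1}\in n\mathbb{Z}+1$, the exceptional curve $E_{i,j-1}$ is a smooth component of $R_{i,j-1}$, and since $C$ is smooth the proper transform $C_{i,j-1}$ meets $E_{i,j-1}$ transversely at the single point $x_{i,j}$. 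Therefore, after $\psi_{i,j}$, the strict transforms $\tilde E_{i,j-1}$ and $\tilde C$ hit $E_{i,j}$ at distinct points, so $\tilde E_{i,j-1}$ avoids $x_{i,j+1}$. Splitting $\tilde R_{i,j-1} = \tilde R' + \tilde E_{i,j-1}$, the intersection $\tilde R'\cdot E_{i,j} = m_{i,j}-1$ gives $\mathrm{mult}_{x_{i,j+1}}(\tilde R_{i,j-1}) \le m_{i,j}-1$. For (2), where $\epsilon_j=0$, this forces $m_{i,j+1}\le m_{i,j}-1<m_{i,j}$. For the equality case of (1) when $n=2$: equality needs $\epsilon_j=1$ (so $m_{i,j}$ odd), and if $j>1$ and $m_{i,j-1}$ were also odd, the same argument would give $m_{i,j+1}\le m_{i,j}$, contradicting equality; hence $m_{i,j-1}$ must be even.

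Assertion (3) is immediate: if $m_{i,i_{\mathrm{bm}}}\in n\mathbb{Z}+1$, then $E_{i,i_{\mathrm{bm}}}\subset R_{i,i_{\mathrm{bm}}}$ by Lemma~\ref{multlem}(1), and since $\tilde C$ also passes through $x_{i,i_{\mathrm{bm}}+1}$, at least two components of $R_{i,i_{\mathrm{bm}}}$ meet there, so $m_{i,i_{\mathrm{bm}}+1}\ge 2$, contradicting the maximality defining $i_{\mathrm{bm}}$. The main obstacle is the transversality step used in (2) and in the equality analysis of (1), namely the fact that whenever an earlier exceptional curve $E_{i,j-1}$ enters $R_{i,j-1}$, its further strict transform absorbs one unit of the intersection budget with $E_{i,j}$ at a point different from $x_{i,j+1}$; this rests essentially on the hypothesis that $C$ is smooth, ensuring that $\tilde C$ meets $E_{i,j-1}$ transversely at the unique point $x_{i,j}$.
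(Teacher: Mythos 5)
The paper states this lemma without proof, citing \cite{enoki}, so there is no internal proof to compare against; your argument is correct and is essentially the standard one, namely writing $R_{i,j}=\widetilde{R}_{i,j-1}+\epsilon_jE_{i,j}$ with $\epsilon_j=m_{i,j}-n[m_{i,j}/n]$, bounding $\mathrm{mult}_{x_{i,j+1}}(\widetilde{R}_{i,j-1})$ by $\widetilde{R}_{i,j-1}\cdot E_{i,j}=m_{i,j}$, and gaining one unit whenever $E_{i,j-1}\subset R_{i,j-1}$ because its strict transform is separated from that of $C$ by the transversality coming from the smoothness of $C$. The only point worth making explicit is that your proof of $(3)$ uses that the proper transform of $C$ is itself a component of $R_{i,i_{\mathrm{bm}}}$, i.e.\ the standing hypothesis $C\subset R$ built into the setup $R'=R-C$; this is not a defect of your argument but a genuine restriction on the statement, since for the ``type $0$'' diagrams with $C\not\subset R$ occurring in Example~\ref{fiberexa} a column can indeed terminate at a multiplicity in $n\mathbb{Z}+1$.
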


\begin{defn}
By using the datum $\{m_{i,j}\}$, one can construct a diagram as in Table \ref{mij}. We call it the {\it singularity diagram} of $C$.

\begin{table}[H] 
\begin{center}
\caption{singularity diagram}
 \begin{tabular}{cccc} \label{mij}
  $\#$& & & \\ \cline{1-1}
 \multicolumn{1}{|c|}{$(x_{1,1_{\rm bm}},m_{1,1_{\rm bm}})$}& & & \\ \cline{1-1}
 \multicolumn{1}{|c|}{}& & & $\#$\\ \cline{4-4}                     
 \multicolumn{1}{|c|}{$\cdots$}& & & \multicolumn{1}{|c|}{$(x_{l,l_{\rm bm}},m_{l,l_{\rm bm}})$}\\ \cline{4-4}
 \multicolumn{1}{|c|}{}&$\cdots$ & & \multicolumn{1}{|c|}{$\cdots$}\\ \cline{1-4}
 \multicolumn{1}{|c|}{$(x_{1,1},m_{1,1})$}&$\cdots$ & & \multicolumn{1}{|c|}{$(x_{l,1},m_{l,1})$}\\ \cline{1-4}
 \end{tabular}
\end{center}
\end{table}

\noindent 
On the top of the $i$-th column (indicated by $\#$ in Table~\ref{mij}), we write $\#=(i_{\rm max}-i_{\rm bm})$ if $i_{\rm bm}< i_{\rm max}$ and leave it blank when $i_{\rm bm}=i_{\rm max}$.
We say that the singularity diagram of $C$ is {\em of type} $0$ (resp. {\em of type} $1$) if $C\not \subset R$ (resp. $C\subset R$).
\end{defn}

\begin{defn}
Let $\mathcal{D}^{t,k}$ be the singularity diagram of $C^{t,k}$.
We call $\mathcal{D}^{t,1}, \mathcal{D}^{t,2},\ldots, \mathcal{D}^{t,j^{t}(F_p)}$ a {\em sequence of singularity diagrams associated with} $D^{t}(p)$.
\end{defn}

Then the following lemma is clear.

\begin{lem} \label{connlem}
Let $\mathcal{D}^{t,1}, \mathcal{D}^{t,2},\ldots, \mathcal{D}^{t,j^{t}(F_p)}$ be a sequence of singularity diagrams associated with $D^{t}(p)$.
Let $l^{t,k}:=\#(R'\cap C^{t,k})$ and $(x_{i,j}^{t,k},m_{i,j}^{t,k})$, $i=1,\dots,l^{t,k}$, $j=1,\dots,i_{\mathrm{bm}}$ denote entries of $\mathcal{D}^{t,k}$.
Let $(x_{i,j}^{t,p},m_{i,j}^{t,p})$ be a singularity on $C^{t,p}$ such that $m_{i,j}^{t,p}\in n\mathbb{Z}+1$, and $m_{i,j-1}^{t,p}\in n\mathbb{Z}$ when $j>1$.
Let $q>p$ be the integer such that $C^{t,q}$ is the exceptional curve for the blow-up at $x_{i,j}^p$.
Then, for every $1\le p'\le p$, $i'$, $j'$ satisfying $(x_{i',j'}^{t,p'},m_{i',j'}^{t,p'})=(x_{i,j}^{t,p},m_{i,j}^{t,p})$, 
the diagram $\mathcal{D}^{t,q}$ has $(x_{i',j'+1}^{t,p'},m_{i',j'+1}^{t,p'})$ as an entry in the bottom row.
\end{lem}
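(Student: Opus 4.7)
The plan is to identify the exceptional curve of the blow-up at $x:=x_{i,j}^{t,p}$ with $C^{t,q}$ in the decomposition, and then to track the proper transform of $C^{t,p'}$ through that blow-up into the bottom row of $\mathcal{D}^{t,q}$.

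First I would invoke Lemma~\ref{multlem}(1): since $m_{i,j}^{t,p}\in n\mathbb{Z}+1$, the exceptional curve $E$ produced by the blow-up at $x$ is contained in the updated branch divisor. Because $E$ contracts to a point on the proper transform of $C^{t,p}$, criterion (ii) in the definition of $\{L^{t,k}\}_k$ places $E$ in the same decomposition piece $D^{t}(p)$, and the index ordering gives $E=C^{t,q}$ for some $q>p$. The supplementary hypothesis $m_{i,j-1}^{t,p}\in n\mathbb{Z}$ (when $j>1$) is what guarantees that the exceptional curve produced one step earlier is \emph{not} contained in $R$, so the current $E$ is unambiguously the next exceptional component $C^{t,q}$ to be labelled, rather than a previously enumerated member of the family.

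Next, for each admissible $p'\le p$ and $(i',j')$ with $(x_{i',j'}^{t,p'},m_{i',j'}^{t,p'})=(x_{i,j}^{t,p},m_{i,j}^{t,p})$, I would unwind the inductive definition of $\mathcal{D}^{t,p'}$. By construction $x_{i',j'+1}^{t,p'}$ is the intersection of the proper transform of $C^{t,p'}$ with the exceptional curve of the blow-up at $x_{i',j'}^{t,p'}=x$, which is precisely $C^{t,q}$. Since $C^{t,p'}\subset R$, its proper transform is a reduced component of the branch divisor on the blown-up surface, so $x_{i',j'+1}^{t,p'}$ lies in $C^{t,q}\cap(R_{\text{new}}-C^{t,q})$. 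By the definition of $\mathcal{D}^{t,q}$, such intersection points are exactly the initial singularities $x_{i'',1}^{t,q}$ in its bottom row, which places $x_{i',j'+1}^{t,p'}$ there.

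The remaining step, and the only point requiring a moment of care, is to verify that the multiplicity $m_{i',j'+1}^{t,p'}$ recorded in $\mathcal{D}^{t,p'}$ agrees with $m_{i'',1}^{t,q}$ recorded in $\mathcal{D}^{t,q}$. Both are measurements of the branch divisor at the same infinitely near point on the same blown-up surface, taken after the same local blow-up at $x$; the other global blow-ups in $\widetilde\psi$ occur over disjoint centers and therefore leave the local picture near $x_{i',j'+1}^{t,p'}$ untouched. The main (and essentially only) obstacle is this bookkeeping between the local diagrammatic construction and the global blow-up sequence; once it is observed, the statement follows at once from the inductive constructions of $\mathcal{D}^{t,p'}$ and $\mathcal{D}^{t,q}$.
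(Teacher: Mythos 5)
Your argument is correct, and it supplies exactly the unwinding of definitions that the paper omits: the paper states this lemma with no proof at all (``Then the following lemma is clear''), and your chain of observations --- $m_{i,j}^{t,p}\in n\mathbb{Z}+1$ forces the exceptional curve $E$ over $x$ into $R$ and hence into $D^t(p)$ as $C^{t,q}$ by criterion (ii); each $x_{i',j'+1}^{t,p'}$ is by construction the intersection of the proper transform of $C^{t,p'}$ (a component of $R-C^{t,q}$) with $C^{t,q}$, hence an initial point of a column of $\mathcal{D}^{t,q}$; and the multiplicities agree because the other centers of $\widetilde{\psi}$ are disjoint from the local picture --- is the intended one. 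The only slight misstatement is your gloss on the hypothesis $m_{i,j-1}^{t,p}\in n\mathbb{Z}$: the blow-up at $x$ always produces a \emph{new} exceptional curve, so there is no risk of $E$ coinciding with a previously enumerated $C^{t,k}$; that hypothesis rather singles out the entries at which a fresh column of $\mathcal{D}^{t,q}$ begins (the situation used later, e.g.\ in the subsets satisfying $(*)$ in Lemma~\ref{j''01lem}), and its inaccurate justification does not affect the validity of your proof.
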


\begin{exa} \label{diagexa}
Suppose that $t$ contributes to $\overline{\eta}_p$.
Then $C^{t,k}=C''^{t,k}$ is a $(-1)$-curve and blown up $n-1$ times for any $k$.

\smallskip

\noindent
(1) If $n=2$, then the point to which $C^{t,1}$ is contracted is a singularity of type $(m\to m)$ for some odd integer $m$.
Indeed, $R'C^{t,1}=m$ and from Lemma~\ref{tclem}, the singularity diagram of $C^{t,1}$ is the following: 
 
\begin{table}[H] 
\begin{center}
 \begin{tabular}{c}
 \\ \cline{1-1}
 \multicolumn{1}{|c|}{$m+1$} \\ \cline{1-1}
 \multicolumn{1}{c}{\lower1ex\hbox{$\mathcal{D}^{t,1}$}}
 \end{tabular}
\end{center}
\end{table}

\noindent
where we drop the symbol indicating the singular point on $C^{t,1}$ for simplicity.
Since $m+1$ is even, we have $j^{t}(F_p)=1$.
This observation gives us 
\begin{equation} \label{n2etaeq}
\overline{\eta}_p=\sum_{k\ge 1}\left(\alpha^{\mathrm{tr}}_{(2k+1\to 2k+1)}(F_p)+\alpha^{\mathrm{co},0}_{(2k+1\to 2k+1)}(F_p)\right).
\end{equation}

\smallskip

\noindent
(2) Suppose that $n=3$. Let $m$ be the multiplicity of the singular point to which $C^{t,1}$ is contracted.
Then $R'C^{t,1}=m$ and from Lemma~\ref{tclem}, all possible singularity diagram of $C^{t,1}$ are the following: 

\begin{table}[H]
\begin{center}
\begin{tabular}{c}

 \begin{minipage}{0.3\hsize}
(i)
 \begin{tabular}{cc}
   & \\ \cline{1-2}
 \multicolumn{1}{|c|}{$n_1$} & \multicolumn{1}{|c|}{$n_2$}\\ \cline{1-2}
 \multicolumn{2}{c}{\lower1ex\hbox{$\mathcal{D}^{t,1}$}}
 \end{tabular}
 \end{minipage}

\begin{minipage}{0.3\hsize}
(ii)
 \begin{tabular}{c}
   \\ \cline{1-1}
 \multicolumn{1}{|c|}{$n_2$}\\ \cline{1-1}
 \multicolumn{1}{|c|}{$n_1$}\\ \cline{1-1}
 \multicolumn{1}{c}{\lower1ex\hbox{$\mathcal{D}^{t,1}$}}
 \end{tabular}
 \end{minipage}

\begin{minipage}{0.3\hsize}
(iii)
 \begin{tabular}{c}
   \\ \cline{1-1}
 \multicolumn{1}{|c|}{$n_1$}\\ \cline{1-1}
 \multicolumn{1}{|c|}{$m_1$}\\ \cline{1-1}
 \multicolumn{1}{c}{\lower1ex\hbox{$\mathcal{D}^{t,1}$}}
 \end{tabular}
 \end{minipage}

\end{tabular}
\end{center}
\end{table}

\noindent
where the integers $n_i\in 3\mathbb{Z}$ and $m_i\in 3\mathbb{Z}+1$ satisfy that
$m+2=n_1+n_2$ in the case (i), $m+2=n_1+n_2$ and $n_2\le n_1$ in the case (ii), $m+3=m_1+n_1$  and $n_1<m_1$ in the case (iii).
If the diagram $\mathcal{D}^{t,1}$ is (i) or (ii), then $j^{t}(F_p)=1$ since there are no $3\mathbb{Z}+1$ type singularities on $C^{t,1}$.
If the diagram $\mathcal{D}^{t,1}$ is (iii), then $j^{t}(F_p)>1$ and
 the singularity diagram $\mathcal{D}^{t,2}$ of $C^{t,2}$ which is obtained by the blow-up at the singularity with multiplicity $m_1$ is (i) or (ii) from Lemma~\ref{connlem}.
Thus we have $j^{t}(F_p)=2$.

\smallskip

\noindent
(3) When $n\ge 4$, then the number $j^{t}(F_p)$ is not bounded. 
For example, we can consider the following sequence of singularity diagrams associated with $D^{t}(p)$ when $n=4$:

\begin{table}[H]
\begin{center}
\begin{tabular}{c}

 \begin{minipage}{0.2\hsize}
 \begin{tabular}{cc}
   & \\ \cline{1-1}
\multicolumn{1}{|c|}{$n_1$} & \\ \cline{1-2}
 \multicolumn{1}{|c|}{$m_1$} & \multicolumn{1}{|c|}{$n_2$}\\ \cline{1-2}
 \multicolumn{2}{c}{\lower1ex\hbox{$\mathcal{D}^{t,1}$}}
 \end{tabular}
 \end{minipage}

\begin{minipage}{0.2\hsize}
 \begin{tabular}{cc}
   & \\ \cline{1-1}
 \multicolumn{1}{|c|}{$n_3$} & \\ \cline{1-2}
 \multicolumn{1}{|c|}{$m_2$} & \multicolumn{1}{|c|}{$n_4$}\\ \cline{1-2}
 \multicolumn{1}{c}{\lower1ex\hbox{$\mathcal{D}^{t,2}$}}
 \end{tabular}
 \end{minipage}

\begin{minipage}{0.18\hsize}
 \begin{tabular}{c}
  $\cdots$
 \end{tabular}
 \end{minipage}

\begin{minipage}{0.2\hsize}
 \begin{tabular}{cc}
   & \\ \cline{1-1}
 \multicolumn{1}{|c|}{$n_{2N-1}$} & \\ \cline{1-2}
 \multicolumn{1}{|c|}{$m_{N}$} & \multicolumn{1}{|c|}{$n_{2N}$}\\ \cline{1-2}
 \multicolumn{1}{c}{\lower1ex\hbox{$\mathcal{D}^{t,N}$}}
 \end{tabular}
 \end{minipage}

\end{tabular}
\end{center}
\end{table}

\noindent
where $n_k\in 4\mathbb{Z}$, $m_k\in 4\mathbb{Z}+1$ and $C^{t,k}$, $k\ge 2$ is the exceptional curve obtained by the blow-up at the multiplicity on $C^{t,k-1}$ with multiplicity $m_{k-1}$.
From Lemma~\ref{tclem}, we have $m_{k}+4=m_{k+1}+n_{2k+1}+n_{2k+2}$ for any $k\ge 1$.

\end{exa}

Recall that the gonality $\mathrm{gon}(C)$ of a non-singular projective curve $C$ is the 
minimum of the degree of morphisms onto $\mathbb{P}^1$.
The gonality $\mathrm{gon}(f)$ of a fibered surface $f\colon S\to B$ is defined to be that of a general 
fiber (cf. \cite{cliff}).

\begin{prop}\label{gonprop}
Let $\theta \colon F \to \Gamma$ be a totally ramified covering of degree $n$ between smooth projective curves branched over $r$ points.
If $r\ge 2n\,\mathrm{gon}(\Gamma)$, then $\mathrm{gon}(F)=n\,\mathrm{gon}(\Gamma)$.
In particular, the gonality of a primitive cyclic covering fibration of type $(g,h,n)$ is $n\,\mathrm{gon}(\varphi)$, when $r\ge 2n\,\mathrm{gon}(\varphi)$.
\end{prop}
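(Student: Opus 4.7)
The plan is to establish the nontrivial direction $\mathrm{gon}(F)\ge n\,\mathrm{gon}(\Gamma)$ via the Castelnuovo--Severi inequality; the opposite inequality is immediate by composing $\theta$ with any morphism $\Gamma\to\mathbb{P}^{1}$ of minimal degree. So I fix an arbitrary $\pi\colon F\to\mathbb{P}^{1}$ of degree $d$ and aim to force $d\ge n\,\mathrm{gon}(\Gamma)$.

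The central move is to look at the product map $(\theta,\pi)\colon F\to\Gamma\times\mathbb{P}^{1}$ and take the normalization $\widetilde C$ of its image, so that both $\theta$ and $\pi$ factor through the induced morphism $F\to\widetilde C$. Writing $e=\deg(F\to\widetilde C)$, the projections become $p_{1}\colon\widetilde C\to\Gamma$ of degree $n/e$ and $p_{2}\colon\widetilde C\to\mathbb{P}^{1}$ of degree $d/e$. When $n/e=1$, the map $p_{1}$ is an isomorphism, $\pi$ factors through $\theta$, and $d/n=\deg p_{2}\ge\mathrm{gon}(\Gamma)$ settles the bound. So the interesting case is $n/e\ge 2$.

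In that case I will first observe a \emph{total-ramification transfer}: since $\theta$ is totally ramified over each branch point $b\in\Gamma$, the fiber $\theta^{-1}(b)$ is a single point, and surjectivity of $F\to\widetilde C$ forces $p_{1}^{-1}(b)$ to be a single point too, necessarily with ramification index $n/e$. Thus $p_{1}$ is totally ramified over all $r$ branch points. Since $\widetilde C\to\Gamma\times\mathbb{P}^{1}$ is birational onto its image by construction, Castelnuovo--Severi applied to $\widetilde C$ yields an upper bound for $g(\widetilde C)$ in terms of $g(\Gamma)$, $n/e$, and $d/e$, while Riemann--Hurwitz for $p_{1}$ yields a lower bound involving $r$ and $n/e-1$. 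Subtracting and dividing by $(n/e)-1\ge 1$ collapses to the clean estimate $r/2\le d/e$, whence $d\ge er/2\ge r/2\ge n\,\mathrm{gon}(\Gamma)$ by the hypothesis $r\ge 2n\,\mathrm{gon}(\Gamma)$.

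The hardest step to formalise will be the total-ramification transfer, since it is the single place where the hypothesis that $\theta$ is totally ramified (as opposed to merely ramified) is genuinely exploited, and it is what aligns the Castelnuovo--Severi and Riemann--Hurwitz bounds so that the $(n/e)-1$ factors cancel. The ``in particular'' statement is then a direct application of the first part to the general fiber restriction $\widetilde\theta|_{\widetilde F}\colon\widetilde F\to\widetilde\Gamma$, which is a classical (hence totally ramified) $n$-cyclic cover branched over $r=\widetilde R\widetilde\Gamma$ points as in \eqref{r}, using $\mathrm{gon}(\varphi)=\mathrm{gon}(\widetilde\Gamma)$.
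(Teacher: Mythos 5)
Your argument is correct and is essentially the paper's own proof: the paper also forms the product map to $\Gamma\times\mathbb{P}^1$, normalizes the image, transfers total ramification to the induced cover $F'\to\Gamma$ of degree $n/m$, and plays the Hurwitz lower bound for $g(F')$ against the arithmetic genus of the image (which is exactly your Castelnuovo--Severi bound) to get $r\le 2k/m$ unless the degree drops to $1$. The only cosmetic differences are that the paper runs it as a contradiction from $k<n\,\mathrm{gon}(\Gamma)$ and computes the adjunction genus explicitly rather than citing Castelnuovo--Severi by name.
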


\begin{proof}
Assume contrary that $F$ 
 has a morphism onto $\mathbb{P}^1$ of degree $k<n\,\mathrm{gon}(\Gamma)$.
This together with the covering $\theta\colon F\to \Gamma$ defines a morphism $\Phi:F\to \mathbb{P}^1\times \Gamma$.  
If $\Phi$ is of degree $m$ onto the image $\Phi(F)$, then $m$ is a 
 common divisor of $n,k$ and the arithmetic genus of 
 $\Phi(F)$ is $(n/m-1)k/m+(h-1)n/m+1$ by the genus formula.
Now, let $F'$ be the normalization of $\Phi(F)$.
Since the covering $F\to \Gamma$ factors through $F'$, we 
 see that the induced covering $F'\to \Gamma$ of degree $n/m$ is a 
 totally ramified covering branched over $r$ points.
Then, by the Hurwitz formula, we have $2g(F')-2=(2h-2)n/m+(n/m-1)r$. 
Since the genus $g(F')$ of $F'$ is not bigger than the arithmetic genus 
 of $\Phi(F)$, we get $r\leq 2(k/m)$ when $n>m$, which is impossible, since $r\geq 2n\,\mathrm{gon}(\Gamma)$ and $k<n\,\mathrm{gon}(\Gamma)$.
Thus, we get $n=m$. Then $F'$ is isomorphic to $\Gamma$ 
and therefore the morphism $F\to \mathbb{P}^1$ factors through $\Gamma$.
Hence we have $k\ge n\,\mathrm{gon}(\Gamma)$ by the difinition of the gonality of $\Gamma$, 
which contradicts $k<n\,\mathrm{gon}(\Gamma)$.
A more careful study shows that any gonality pencil of $F$ is the pull-back of a gonality pencil of $\Gamma$ when 
 $r> 2n\,\mathrm{gon}(\Gamma)$.
\end{proof}

\section{Primitive cyclic covering fibrations of type $(g,1,n)$}
Let $f\colon S\to B$ be a primitive cyclic covering fibration of type $(g,1,n)$.
Since $\varphi\colon W\to B$ is a relatively minimal elliptic surface, 
$K_{\varphi}$ is numerically equivalent to $\left(\chi_{\varphi}+\sum_{p\in B}\left(1-\frac{1}{m_p}\right)\right)\Gamma$
by the canonical bundle formula, where $m_p$ denotes the multiplicity of the fiber $\Gamma_p$ of $\varphi$ over $p$. In particular, we have $K_{\varphi}^2=0$.
For $p\in B$, we put $\nu(F_p)=1-1/m_p$ and $\nu=\sum_{p\in B}\nu(F_p)$. 
Then, we have $K_{\varphi}R=(\chi_{\varphi}+\nu)r$.
Combining these equalities with \eqref{KReq}, \eqref{kfeq}, \eqref{chifeq} and \eqref{efeq},
we get the following lemma:

\begin{lem} \label{locallem}
The following equalities hold.

\begin{align*}
K_f^2=&\;\sum_{k\ge 1}\left((n+1)(n-1)k-n\right)\alpha_k+\frac{(n-1)^2}{n}(\alpha_0-2\varepsilon) \\
&+\frac{(n+1)(n-1)r}{n}(\chi_{\varphi}+\nu)+\varepsilon. \\
\chi_f=&\;\frac{1}{12}(n-1)(n+1)\sum_{k\ge 1}k\alpha_k+\frac{(n-1)(2n-1)}{12n}(\alpha_0-2\varepsilon) \\
&+\frac{(n+1)(n-1)r}{12n}(\chi_{\varphi}+\nu)+n\chi_{\varphi}. \\
e_f=&\;(n-1)\alpha_0+n\sum_{k\ge 1}\alpha_k-(2n-1)\varepsilon+12n\chi_{\varphi}. \\
\end{align*}
\end{lem}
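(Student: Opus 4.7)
The proof is a direct substitution argument: one takes the general identities \eqref{KReq}, \eqref{kfeq}, \eqref{chifeq} and \eqref{efeq} and collapses them using the special structure available when $h = 1$.

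The first step is to record the inputs coming from the theory of relatively minimal elliptic surfaces. Kodaira's canonical bundle formula gives $K_\varphi$ numerically equivalent to $(\chi_\varphi + \nu)\Gamma$, from which two consequences follow immediately: $K_\varphi^2 = 0$, and $K_\varphi R = (\chi_\varphi + \nu)\, \Gamma \cdot R = (\chi_\varphi + \nu) r$. The first kills the $nK_\varphi^2$ term in \eqref{kfeq}, and the second eliminates $K_\varphi R$ wherever it appears in \eqref{kfeq} and \eqref{chifeq}.

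Next I would solve \eqref{KReq} for $R^2$:
\[
R^2 = n\sum_{k\ge 1} k(nk-1)\alpha_k + \alpha_0 - 2\varepsilon - (\chi_\varphi + \nu)r,
\]
and substitute this, together with the two identities above, into \eqref{kfeq} and \eqref{chifeq}. After collecting terms, the coefficient of $(\chi_\varphi + \nu)r$ in $K_f^2$ reduces to $2(n-1) - (n-1)^2/n = (n+1)(n-1)/n$; the coefficient of $\alpha_0 - 2\varepsilon$ is unchanged at $(n-1)^2/n$; and the coefficient of each $\alpha_k$ simplifies because the quadratic-in-$k$ contributions cancel, leaving the linear expression $(n+1)(n-1)k - n$. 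The computation for $\chi_f$ is completely parallel, and the analogous cancellation of $k^2$-terms produces the coefficient $\tfrac{(n-1)(n+1)}{12}k$ of $\alpha_k$ together with the asserted coefficients of $\alpha_0 - 2\varepsilon$ and $(\chi_\varphi+\nu)r$.

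For the equation concerning $e_f$, the starting point is \eqref{efeq}, which already has the correct shape except for the constant-in-invariants term $ne_\varphi$. All that is needed is to identify $ne_\varphi$ with $12n\chi_\varphi$: a general fiber of $\varphi$ is elliptic, so $\chi(\mathcal{O}_\Gamma) = 0$ and $e(\Gamma) = 0$, giving $\chi(\mathcal{O}_W) = \chi_\varphi$ and $e(W) = e_\varphi$; combined with $K_W^2 = K_\varphi^2 = 0$, Noether's formula $12\chi(\mathcal O_W)=K_W^2+e(W)$ yields $e_\varphi = 12\chi_\varphi$. There is essentially no conceptual obstacle; the one place where a slip is easy is in the $\alpha_k$-coefficients for $K_f^2$ and $\chi_f$, which I would verify by directly expanding the degree-$2$ polynomials in $k$ on both sides to confirm the cancellation of the $k^2$-terms.
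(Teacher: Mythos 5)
Your proposal is correct and is precisely the paper's own (unwritten) argument: the paper justifies the lemma simply by "combining" \eqref{KReq}, \eqref{kfeq}, \eqref{chifeq} and \eqref{efeq} with the elliptic-surface facts $K_\varphi^2=0$, $K_\varphi R=(\chi_\varphi+\nu)r$ and $e_\varphi=12\chi_\varphi$, exactly as you do. One caveat for the verification you promise at the end: the cancellation of the $k^2$-terms holds only if the term $\sum_{k\ge 1}((n-1)k-1)^2\alpha_k$ in \eqref{kfeq} carries an overall factor $n$ --- which is forced by consistency of \eqref{kfeq}, \eqref{chifeq} and \eqref{KReq} with \eqref{efeq} via Noether's formula, but is missing from the displayed \eqref{kfeq} --- so when you expand you will be silently correcting a typo in the source rather than finding your own slip.
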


For $p\in B$, we put $\chi_{\varphi}(F_p)=e_{\varphi}(\Gamma_p)/12$ and
\begin{align*}
K_f^2(F_p)=&\;\sum_{k\ge 1}\left((n+1)(n-1)k-n\right)\alpha_k(F_p)+\frac{(n-1)^2}{n}(\alpha_0(F_p)-2\varepsilon(F_p)) \\
&+\frac{(n+1)(n-1)r}{n}(\chi_{\varphi}(F_p)+\nu(F_p))+\varepsilon(F_p), \\
\chi_f(F_p)=&\;\frac{1}{12}(n-1)(n+1)\sum_{k\ge 1}k\alpha_k(F_p)+\frac{(n-1)(2n-1)}{12n}(\alpha_0(F_p)-2\varepsilon(F_p)) \\
&+\frac{(n+1)(n-1)r}{12n}(\chi_{\varphi}(F_p)+\nu(F_p))+n\chi_{\varphi}(F_p), \\
e_f(F_p)=&\;(n-1)\alpha_0(F_p)+n\sum_{k\ge 1}\alpha_k(F_p)-(2n-1)\varepsilon(F_p)+12n\chi_{\varphi}(F_p). \\
\end{align*}
Then, the following slope equality holds:
\begin{thm} 
Let $f\colon S\to B$ be a primitive cyclic covering fibration of type $(g,1,n)$.
Then
$$
K_f^2=\lambda_{g,1,n}\chi_f+\sum_{p\in B}\mathrm{Ind}(F_p),
$$
where $\lambda_{g,1,n}:=12(n-1)/(2n-1)$ and $\mathrm{Ind}(F_p)$ is defined by
\begin{align*}
\mathrm{Ind}(F_p)=&n\sum_{k\ge 1}\left(\frac{(n+1)(n-1)}{2n-1}k-1\right)\alpha_k(F_p)+\frac{n-1}{2n-1}\left((n+1)r-12n\right)\chi_{\varphi}(F_p) \\
&+\frac{(n+1)(n-1)r}{2n-1}\nu(F_p)+\varepsilon(F_p). \\
\end{align*}
Moreover, if $r\ge \displaystyle{\frac{12n}{n+1}}$, then $\mathrm{Ind}(F_p)$ is non-negative for any $p\in B$.
\end{thm}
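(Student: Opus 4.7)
The plan is to derive the identity by computing $K_f^2 - \lambda_{g,1,n}\chi_f$ term by term using the three formulas in Lemma~\ref{locallem}, and then verify non-negativity of $\mathrm{Ind}(F_p)$ summand by summand. The value $\lambda_{g,1,n} = 12(n-1)/(2n-1)$ is not an arbitrary constant: it is precisely the ratio of the coefficients of $\alpha_0 - 2\varepsilon$ in the $K_f^2$- and $\chi_f$-formulas, since
$$\frac{(n-1)^2/n}{(n-1)(2n-1)/(12n)} = \frac{12(n-1)}{2n-1}.$$
So this choice of $\lambda$ is forced by the requirement that the $\alpha_0 - 2\varepsilon$ contribution vanishes, which is essential because $\alpha_0(F_p)$ is a signed quantity (see Definition~\ref{sinddef}) that need not be locally non-negative.

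After this cancellation, I would collect the surviving terms: the coefficient of $\alpha_k$ becomes $n\bigl((n+1)(n-1)k/(2n-1) - 1\bigr)$; the coefficient of $(\chi_\varphi + \nu)$ becomes $(n+1)(n-1)r/(2n-1)$; the standalone $n\chi_\varphi$ term in $\chi_f$ generates an extra $-12n(n-1)\chi_\varphi/(2n-1)$; and the $\varepsilon$ term in $K_f^2$ passes through untouched. Separating the combined $\chi_\varphi$ contributions produces the coefficient $(n-1)\bigl((n+1)r - 12n\bigr)/(2n-1)$ of $\chi_\varphi$ that appears in $\mathrm{Ind}$.

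To pass to a decomposition over $p \in B$, each of $\alpha_k$, $\varepsilon$, $\nu$ is a sum of local contributions by definition. For $\chi_\varphi$, since $\varphi$ is relatively minimal and $K_\varphi^2 = 0$, Noether's formula gives $12\chi_\varphi = e_\varphi = \sum_p e_\varphi(\Gamma_p)$, so setting $\chi_\varphi(F_p) = e_\varphi(\Gamma_p)/12$ produces the stated local decomposition and hence the claimed $\mathrm{Ind}(F_p)$.

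For the non-negativity under $r \geq 12n/(n+1)$, each coefficient in $\mathrm{Ind}(F_p)$ pairs with a non-negative invariant. The coefficient $n\bigl((n+1)(n-1)k/(2n-1) - 1\bigr)$ of $\alpha_k(F_p)$ is non-negative for $k \geq 1$ and $n \geq 2$, since $(n+1)(n-1)k \geq n^2 - 1 \geq 2n - 1$. The coefficient of $\nu(F_p)$ is manifestly positive, and $\nu(F_p) = 1 - 1/m_p \geq 0$; likewise $\varepsilon(F_p) \geq 0$. The only delicate term is the $\chi_\varphi(F_p)$ contribution: its coefficient $(n-1)\bigl((n+1)r - 12n\bigr)/(2n-1)$ is non-negative precisely under the hypothesis $r \geq 12n/(n+1)$, and $\chi_\varphi(F_p) = e_\varphi(\Gamma_p)/12 \geq 0$ by Kodaira's classification of singular fibers of elliptic surfaces (all of which have non-negative topological Euler number). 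The main point to watch is therefore isolating the $\chi_\varphi(F_p)$ coefficient and observing that this is exactly where the numerical hypothesis on $r$ is needed; everything else reduces to routine coefficient-chasing.
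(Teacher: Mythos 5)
Your proposal is correct and follows essentially the same route as the paper: the paper defines the local quantities $K_f^2(F_p)$, $\chi_f(F_p)$ directly from Lemma~\ref{locallem} and observes that $K_f^2(F_p)-\lambda_{g,1,n}\chi_f(F_p)=\mathrm{Ind}(F_p)$, which is exactly the coefficient computation you carry out (including the observation that $\lambda_{g,1,n}$ is forced by cancelling the $\alpha_0-2\varepsilon$ term). Your non-negativity argument is also the intended one, and is in fact spelled out more explicitly than in the paper.
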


\begin{proof}
Since $K_f^2=\sum_{p\in B}K_f^2(F_p)$, $\chi_f=\sum_{p\in B}\chi_f(F_p)$ and $K_{f}^2(F_p)-\lambda_{g,1,n}\chi_f(F_p)=\mathrm{Ind}(F_p)$, the claim follows.
\end{proof}

For an oriented compact real $4$-dimensional manifold $X$, the signature 
$\mathrm{Sign}(X)$ of $X$ is defined to be the number of positive eigenvalues 
minus the number of negative eigenvalues of the intersection form on $H^2(X)$.
From Lemma~\ref{locallem}, we observe the local 
concentration of $\mathrm{Sign}(S)$ to a finite number of fiber germs.

\begin{cor}[cf. \cite{ak}]
Let $f\colon S\to B$ be a primitive cyclic covering fibration of type $(g,1,n)$.
Then
$$
\mathrm{Sign}(S)=\sum_{p\in B}\sigma(F_p),
$$
where $\sigma(F_{p})$ is defined by
\begin{align*}
\sigma(F_p)=&n\sum_{k\ge 1}\left(\frac{(n+1)(n-1)}{3}k-1\right)\alpha_k(F_p)+\left(\frac{(n-1)(n+1)r}{3n}-8n\right)\chi_{\varphi}(F_p) \\
&+\frac{(n+1)(2n-1)}{3n}\varepsilon(F_p)+\frac{(n+1)(n-1)r}{3n}\nu(F_p)-\frac{(n+1)(n-1)}{3n}\alpha_0(F_p). \\
\end{align*}

\end{cor}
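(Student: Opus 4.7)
The plan is to combine Hirzebruch's signature theorem for the surface $S$ with the local decompositions of $K_f^2$ and $e_f$ provided by Lemma~\ref{locallem}. By Hirzebruch, one has $\mathrm{Sign}(S)=\tfrac{1}{3}\bigl(c_1(S)^2-2c_2(S)\bigr)=\tfrac{1}{3}\bigl(K_S^2-2e(S)\bigr)$. Writing $K_S=K_f+f^{\ast}K_B$ and using $(f^{\ast}K_B)^2=0$ together with $K_f\cdot F=2g-2$ and $e(S)=e_f+e(B)e(F)$, the correction terms depending on the genus of $B$ cancel in the combination $K_S^2-2e(S)$, yielding
\begin{equation*}
\mathrm{Sign}(S)\;=\;\tfrac{1}{3}\bigl(K_f^2-2e_f\bigr).
\end{equation*}

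Next I would substitute the expressions from Lemma~\ref{locallem}. Each of the quantities appearing there already decomposes naturally as $X=\sum_{p\in B}X(F_p)$: for $\chi_{\varphi}$ this is the Noether identity $12\chi_{\varphi}=e_{\varphi}$ (valid because $K_{\varphi}^2=0$ for the relatively minimal elliptic surface $\varphi$), which justifies the definition $\chi_{\varphi}(F_p)=e_{\varphi}(\Gamma_p)/12$; for $\nu$ the definition $\nu(F_p)=1-1/m_p$ is local by construction; and for $\alpha_0$, $\alpha_k$, $\varepsilon$ the locality is built into their definitions. Hence both $K_f^2$ and $e_f$ split as $\sum_{p\in B}K_f^2(F_p)$ and $\sum_{p\in B}e_f(F_p)$, with local pieces exactly those introduced just before Lemma~\ref{locallem}.

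The claim then reduces to the purely algebraic identity
\begin{equation*}
\sigma(F_p)\;=\;\tfrac{1}{3}\bigl(K_f^2(F_p)-2e_f(F_p)\bigr),
\end{equation*}
which is checked by collecting the coefficients of $\alpha_k(F_p)$, $\alpha_0(F_p)$, $\varepsilon(F_p)$, $\chi_{\varphi}(F_p)$, and $\nu(F_p)$ in the two local formulas of Lemma~\ref{locallem} and dividing by three. There is no substantial obstacle; the only point requiring a little care is the bookkeeping of the $\varepsilon(F_p)$-coefficient, since $\varepsilon$ enters $K_f^2$ through the combination $-2(n-1)^2/n+1$ and $e_f$ through $-(2n-1)$, so one must reduce the resulting fraction over the common denominator $n$ carefully. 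Summing the local identity over $p\in B$ gives $\mathrm{Sign}(S)=\sum_{p\in B}\sigma(F_p)$, as required.
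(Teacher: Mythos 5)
Your proposal is correct and essentially reproduces the paper's own argument: the paper likewise derives the global identity $\mathrm{Sign}(S)=K_f^2-8\chi_f$ from the index theorem and then verifies $\sigma(F_p)=K_f^2(F_p)-8\chi_f(F_p)$ by a direct coefficient computation, which is exactly your local identity $\sigma(F_p)=\tfrac{1}{3}\bigl(K_f^2(F_p)-2e_f(F_p)\bigr)$ because the local quantities satisfy the Noether relation $12\chi_f(F_p)=K_f^2(F_p)+e_f(F_p)$. The only difference is the cosmetic choice of expressing the signature through $e_f$ rather than $\chi_f$.
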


\begin{proof}
By the index theorem (cf. \cite[p.~126]{pag}), we have 
$$
\mathrm{Sign}(S)=\sum_{p+q\equiv 0 (\mathrm{mod} 2)}h^{p,q}(S)=K_f^2-8\chi_f.
$$
On the other hand, we can see that
$$
\sigma(F_p)=K_f^2(F_p)-8\chi_f(F_p)
$$
by a computation.
\end{proof}

\section{Upper bound of the slope: the case of type $(g,1,n)$}

In this section, we prove the following theorem:

\begin{thm} \label{h1upperthm}
Let $f\colon S\to B$ be a primitive cyclic covering fibration of type $(g,1,n)$.

\smallskip

\noindent
$(1)$ If $n\ge 4$, then we have
$$
K_f^2\le \left(12-\frac{12n^2}{r(n-1)(n+1)}\right)\chi_f.
$$

\smallskip

\noindent
$(2)$ If $n=3$ and $g\ge 7$, then we have
$$
K_f^2\le \left(12-\frac{24}{4g-17}\right)\chi_f.
$$

\smallskip

\noindent
$(3)$ If $n=2$ and $g\ge 3$, then we have
$$
K_f^2\le \left(12-\frac{2}{g-2}\right)\chi_f.
$$

\end{thm}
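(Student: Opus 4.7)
The plan is to reduce the inequality $K_f^2\le (12-c)\chi_f$ to the equivalent form $c\chi_f\le e_f$ via Noether's formula, and then to verify the latter by expanding both sides into linear combinations of the local invariants $\alpha_k(F_p)$, $\alpha_0(F_p)$, $\varepsilon(F_p)$, $\chi_{\varphi}(F_p)$ and $\nu(F_p)$ using Lemma~\ref{locallem}. The task then becomes showing that the resulting linear form is non-negative once the combinatorial constraints on the singularities of $R$ provided by Section~1 are taken into account.

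First I would expand $e_f-c\chi_f$ and examine each coefficient. A direct computation shows that the coefficient of $\alpha_k$ equals $n-c(n-1)(n+1)k/12$, positive for small $k$ but negative once $k$ is large. In case (1), the choice $c=12n^2/(r(n-1)(n+1))$ is precisely the value that makes this coefficient vanish at $k=r/n$. Since a singular point of $R$ lying only on horizontal branches has multiplicity at most $r=\widetilde{R}\widetilde{\Gamma}$, all such singularities sit below the threshold $k\le r/n$ and cause no harm. The coefficients of $\varepsilon$ and $\nu$ turn out to be negative, whereas the coefficient of $\chi_{\varphi}$ is positive under the hypothesis $r\ge 12n/(n+1)$ implicit in the given range of $g$, so the surplus coming from $\chi_{\varphi}$ is the only resource with which to cover the negative contributions.

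The next step is to absorb these negative contributions using the structural results of Section~1. For the $\nu$-term one uses that a multiple fiber of multiplicity $m_p$ contributes $\nu(F_p)=1-1/m_p<1$, which can be bounded against $\chi_{\varphi}(F_p)$ by Kodaira's classification of singular elliptic fibers; this is where the hypothesis $h=1$ is essential. For the $\varepsilon$-term one uses the identity relating $\alpha_0^-(F_p)$ to $\varepsilon(F_p)$ and the counts $j_{b,\bullet}(F_p)$, converting $\varepsilon$ into contributions coming from vertical components of $\widetilde{R}$. Finally, for the singularities with $k>r/n$, one applies Lemmas~\ref{tclem}, \ref{easylem} and \ref{mijlem}: summing the multiplicity relations across a sequence of singularity diagrams $\mathcal{D}^{t,1},\dots,\mathcal{D}^{t,j^{t}(F_p)}$ attached to each vertical component $D^t(p)$ bounds the troublesome high-$k$ contributions $\alpha_k''$ by those of lower-$k$ terms and by the count of $(-n)$-curves in $\widetilde{R}$, which are in turn governed by $\varepsilon$ and $\chi_{\varphi}$.

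In cases (2) and (3), where $n\in\{2,3\}$, the catalogue of possible singularity diagrams is short by Example~\ref{diagexa}, so the bookkeeping reduces to a direct case analysis. For $n=2$ the identity \eqref{n2j''01eq} and the finer decomposition into $\alpha^{\mathrm{tr}}_{(2k+1\to 2k+1)}$, $\alpha^{\mathrm{co},0}_{(2k+1\to 2k+1)}$, $\alpha^{\mathrm{co},1}_{(2k+1\to 2k+1)}$ are the essential tools, and are what allow the refined constant $2/(g-2)$. The numerical thresholds $g\ge 7$ for $n=3$ and $g\ge 3$ for $n=2$ arise from demanding that the auxiliary coefficients produced by all of the substitutions retain the correct sign. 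The main obstacle I anticipate is precisely this quantitative analysis of the high-multiplicity singularities: turning the qualitative description of the singularity diagrams into sharp numerical inequalities requires tracking how each individual blow-up simultaneously contributes to several $\alpha_k$'s and to $\varepsilon$ and $\alpha_0$, and then compiling all the resulting constraints into a single non-negative linear form. The delicacy of this compilation is plausibly the reason for the three separate bounds (and the three separate threshold conditions on $g$) that appear in the statement.
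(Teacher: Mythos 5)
Your overall strategy coincides with the paper's: by Noether's formula the claimed bound is equivalent to $e_f\ge\mu\chi_f$, and the paper indeed proves the fiberwise statement $e_f(F_p)-\mu\chi_f(F_p)\ge 0$ by expanding both sides via Lemma~\ref{locallem}, observing that the coefficient of $\alpha_k$ is $n-\mu'k$ with $\mu'=(n-1)(n+1)\mu/12$ (vanishing at $k=r/n$ for the choice of $\mu$ in case (1)), and then absorbing the negative contributions. Your identification of which terms are harmless ($\alpha'_k$ for $k\le r/nm$) and which are dangerous ($\varepsilon$, $\nu$, and the singularities involved in vertical components) is correct, as is the role of \eqref{n2j''01eq} and the $\mathrm{tr}/\mathrm{co}$ decomposition for $n=2$.

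However, the proposal stops exactly where the proof begins, and the part you defer is not routine bookkeeping that follows from Lemmas~\ref{tclem}, \ref{easylem} and \ref{mijlem} alone. Two concrete points. First, your plan to control the $\nu$-term by bounding $\nu(F_p)$ against $\chi_\varphi(F_p)$ does not match how the cancellation actually works and would overload $\chi_\varphi(F_p)$: the paper absorbs $\nu(F_p)=1-1/m$ into the lower bound $\alpha_0^+(F_p)\ge(1-1/m)r+(n-2)(\iota(F_p)+2\kappa(F_p))+\beta_p$ of Lemma~\ref{alpha0lem}, producing the non-negative combination $r(m-1)(n-1)(4-\mu)/4m$, while the entire budget of $\chi_\varphi(F_p)$ is needed elsewhere (e.g.\ to cancel the strictly negative coefficients of $j'_{0,1}(F_p)$ for $n=2,3$ and of $\eta'_p$ for $n\ge 4$). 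Second, the ``compilation into a single non-negative linear form'' that you flag as the main obstacle genuinely requires the intermediate invariants $\iota$, $\kappa$, $\gamma_p$, $\eta'_p$, $\eta''_p$, $\delta_{\mathrm{cyc}}$ and the conversion formulas of Lemmas~\ref{alphaklem}--\ref{chiphilem}, followed by a case analysis over the Kodaira types of $\Gamma_p$ (in particular the $3$-vertical singularities on fibers of type $(\mathrm{I\hspace{-.1em}I})$, $(\mathrm{I\hspace{-.1em}I\hspace{-.1em}I})$, $(\mathrm{I\hspace{-.1em}V})$, the multiple fibers $({}_m\mathrm{I}_k)$, and the $(\mathrm{I}^*_k)$ configurations where $j'^{t}_{0,1}(F_p)=4$). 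Several individual coefficients remain negative after all substitutions (for instance that of $\alpha^{\mathrm{co},1}_{(3\to 3)}(F_p)$ when $n=2$, which needs Lemma~\ref{j''01lem}~(2), and that of $\kappa(F_p)$, which needs Lemma~\ref{n2h1kappalem}), so positivity is only recovered configuration by configuration. Without carrying out this analysis the argument is a correct outline of the paper's method rather than a proof.
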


\begin{cor}
Let $f\colon S\to B$ be a relatively minimal bielliptic fibered surface of genus $g\ge 6$. Then, we have
$$
K_f^2\le \left(12-\frac{2}{g-2}\right)\chi_f.
$$
\end{cor}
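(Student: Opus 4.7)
The plan is to deduce the corollary as an essentially immediate consequence of Theorem~\ref{h1upperthm}(3), with the only substantive point being the identification of a bielliptic fibration of genus $g\ge 6$ as a primitive cyclic covering fibration of type $(g,1,2)$.

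First I would recall the definition: a relatively minimal bielliptic fibered surface $f\colon S\to B$ of genus $g$ is one whose general fiber $F$ is a bielliptic curve, i.e., admits a degree-$2$ morphism $F\to E$ onto some smooth elliptic curve $E$. For $g\ge 6$, this bielliptic involution on $F$ is unique (this is the genus bound cited in the introduction via \cite{barja-naranjo} and \cite{dbl}, essentially because a bielliptic curve of genus $\ge 6$ cannot simultaneously be hyperelliptic or carry a second independent bielliptic structure). Consequently the fiberwise involutions glue to a global involution $\sigma\in\mathrm{Aut}(S/B)$, and the quotient $S/\langle\sigma\rangle$ gives rise, after passing to a suitable birational model $\widetilde{W}\to B$ of genus $1$, to the classical $2$-cyclic covering $\widetilde{\theta}\colon \widetilde{S}\to \widetilde{W}$ of Definition~\ref{primdef}. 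Hence $f$ is a primitive cyclic covering fibration of type $(g,1,2)$ in the sense of this paper.

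Next I would simply apply Theorem~\ref{h1upperthm}(3) with $n=2$. Since $g\ge 6\ge 3$, the hypothesis of part (3) is satisfied, and we immediately obtain
\begin{equation*}
K_f^2\le \left(12-\frac{2}{g-2}\right)\chi_f,
\end{equation*}
which is exactly the inequality claimed.

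The only conceivable obstacle is the uniqueness-of-bielliptic-structure step used to pass from the fiberwise double cover to a genuine global cyclic cover fitting Definition~\ref{primdef}; but this is classical and is exactly the reason the paper imposes $g\ge 6$ throughout its discussion of the bielliptic case. Once that identification is in hand, the corollary is a one-line specialization of Theorem~\ref{h1upperthm}(3).
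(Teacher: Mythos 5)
Your proposal is correct and follows essentially the same route as the paper: the paper's own proof likewise notes that for $g>4h+1$ (so $g\ge 6$ when $h=1$) the bielliptic involution of a general fiber is unique, hence extends to a global involution by relative minimality, making $f$ a primitive cyclic covering fibration of type $(g,1,2)$, and then applies Theorem~\ref{h1upperthm}~(3). No gaps.
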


\begin{proof}
Let $f\colon S\to B$ be a relatively minimal fibered surface of genus $g$ whose general fiber $F$ is a double cover of a smooth curve $\Gamma$ of genus $h$.
If $g>4h+1$, an involution of the general fiber $F$ of $f$ over $\Gamma$ is unique.
Then, the fibration $f$ has a global involution since it is relatively minimal (cf.\ \cite{dbl}).
Hence $f$ is a primitive cyclic covering fibration of type $(g,h,2)$.
In particular, a relatively minimal bielliptic fibered surface of genus $g\ge 6$ is a primitive cyclic covering fibration of type $(g,1,2)$.
\end{proof}

Let $f\colon S\to B$ be a primitive cyclic covering fibration of type $(g,1,n)$.
We fix $p\in B$. Let $m=m_p$ be the multiplicity of the fiber $\Gamma_p$ of $\varphi$ over $p$.
Since $h=1$, we have $j_{b,\bullet}(F_p)=0$ for any $b\ge 2$.
From the classification of singular fibers of relatively minimal elliptic surfaces (\cite{Kod}), we have the following lemma for $u$-vertical type singularities:

\begin{lem}\label{3verlem}
There exist no $u$-vertical type singularities of $R$ for $u\ge 4$.
All possible $3$-vertical type singularities are as follows.

\smallskip

\noindent
Type $(\mathrm{I\hspace{-.1em}I}):$ $\Gamma_p$ is a singular fiber of type $(\mathrm{I\hspace{-.1em}I})$ in the Kodaira's table $($\cite{Kod}$) ($i.e., it is a singular rational curve with one cusp$)$ and it is contained in $R$. 
The cusp on $\Gamma_p$ is a singularity of type $n\mathbb{Z}+1$ and the singularity at which the proper transform of $\Gamma_p$ and the exceptional curve $E_1$ for the blow-up at the cusp intersect is also of type $n\mathbb{Z}+1$.
Then, the proper transforms of $\Gamma_p$ and $E_1$ and the exceptional curve $E_2$ for the blow-up at this singularity form a $3$-vertical type singularity.

\setlength\unitlength{0.3cm}
\begin{figure}[H]
\begin{center}
\begin{tabular}{c}

 \begin{minipage}{0.7\hsize}
 \begin{picture}(20,5)
 \qbezier(0,0)(3,0)(3,3)
 \qbezier(0,0)(3,0)(3,-3)
 \qbezier(16,0)(16,2.5)(18.5,2.5)
 \qbezier(16,0)(16,-2.5)(18.5,-2.5)
 \put(10,0){\vector(-1,0){4}}
 \put(26,0){\vector(-1,0){4}}
 \put(16,-4){\line(0,1){8}}
 \put(32,-4){\line(0,1){8}}
 \put(28,0){\line(1,0){8}}
 \qbezier(29,-3)(32,0)(35,3)
 \put(0,1){$\Gamma_p$}
 \put(16,4){$E_1$}
 \put(36,-1){$E_2$}
 \put(6,1){\rm{blow-up}}
 \put(22,1){\rm{blow-up}}
 \end{picture}
 \end{minipage}

\end{tabular}
\end{center}
\end{figure}

\ \\

\noindent
Type $(\mathrm{I\hspace{-.1em}I\hspace{-.1em}I}):$ $\Gamma_p$ is a singular fiber of type $(\mathrm{I\hspace{-.1em}I\hspace{-.1em}I})$ in the Kodaira's table $($i.e., it consists of two nonsingular rational curves intersecting each other at one point of  order two$)$ and it is contained in $R$. 
The singularity on $\Gamma_p$ is a singularity of type $n\mathbb{Z}+1$.
Then, the proper transforms of $\Gamma_p$ and the exceptional curve $E_1$ for the blow-up at this singularity form a $3$-vertical type singularity.

\setlength\unitlength{0.3cm}
\begin{figure}[H]
\begin{center}
\begin{tabular}{c}

 \begin{minipage}{0.7\hsize}
 \begin{picture}(20,5)
 \qbezier(8,0)(11,0)(11,3)
 \qbezier(8,0)(11,0)(11,-3)
 \qbezier(8,0)(5,0)(5,3)
 \qbezier(8,0)(5,0)(5,-3)
 \qbezier(20.6,-2)(24,0)(27.4,2)
 \qbezier(20.6,2)(24,0)(27.4,-2)
 \put(18,0){\vector(-1,0){4}}
 \put(24,-4){\line(0,1){8}}
 \put(8,1){$\Gamma_p$}
 \put(24,4){$E_1$}
 \put(14,1){\rm{blow-up}}
 \end{picture}
 \end{minipage}

\end{tabular}
\end{center}
\end{figure}

\ \\

\noindent
Type $(\mathrm{I\hspace{-.1em}V}):$ $\Gamma_p$ is a singular fiber of type $(\mathrm{I\hspace{-.1em}V})$ in the Kodaira's table $($i.e.\ it consists of three nonsingular rational curves intersecting one another at one point transversely$)$ and it is contained in $R$. 
The singularity on $\Gamma_p$ is a $3$-vertical type singularity.

\setlength\unitlength{0.3cm}
\begin{figure}[H]
\begin{center}
\begin{tabular}{c}

 \begin{minipage}{0.7\hsize}
 \begin{picture}(20,5)
 \qbezier(12.6,-2)(16,0)(19.4,2)
 \qbezier(12.6,2)(16,0)(19.4,-2)
 \put(16,-4){\line(0,1){8}}
 \put(17,3){$\Gamma_p$}
 \end{picture}
 \end{minipage}

\end{tabular}
\end{center}
\end{figure}

\ \\

\noindent
In particular, we have $\iota^{(u)}(F_p)=\kappa^{(u)}(F_p)=0$ for $u\ge 4$ 
and $0\le \iota^{(3)}(F_p)+\kappa^{(3)}(F_p)\le 1$.
\end{lem}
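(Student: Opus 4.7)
The plan is to combine Kodaira's classification of singular fibers of a relatively minimal elliptic surface (which applies since $h=1$) with a direct blow-up analysis based on Lemma~\ref{multlem}. Recall that a $(t,u)$-vertical type singularity at a point $x$ is witnessed by $u$ distinct vertical curves $C^{t,k}\subset R$, each either an irreducible component of $\Gamma_p$ lying in $R$ or an exceptional $(-1)$-curve of $\widetilde{\psi}$ lying in $R$, whose proper transforms all pass through $x$. By Lemma~\ref{multlem}~(1), a newly introduced exceptional curve $E_i$ belongs to $R_i$ precisely when $m_i\in n\mathbb{Z}+1$, so the contribution of exceptional curves to a vertical type singularity is tightly constrained by the multiplicities at the centres of the blow-ups in $\widetilde{\psi}$.

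I would first dispose of the ``base'' level, i.e., the incidence pattern of irreducible components of $\Gamma_p$ themselves. Inspecting Kodaira's table directly, the only fiber type in which three components of $\Gamma_p$ meet at one point is type $(\mathrm{IV})$, where the three components meet transversely at the unique triple point; in all other types, at most two components meet at any given point, and always transversely. Next I would check that further blow-ups cannot produce a $u$-vertical type singularity with $u\ge 4$: at a point where only two vertical $R$-components meet transversely, blowing up produces an exceptional curve hitting the two proper transforms at distinct points, so that even if this exceptional curve belongs to $R$, no point of the blown-up surface carries more than two vertical $R$-components. Applied iteratively, this rules out $u\ge 4$ in all Kodaira types, and also rules out $u=3$ for the types $\mathrm{I}_n$, $\mathrm{I}_n^*$, $\mathrm{II}^*$, $\mathrm{III}^*$, $\mathrm{IV}^*$, as well as for type $(\mathrm{IV})$ with fewer than three components of $\Gamma_p$ contained in $R$.

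The remaining task is to realise the three configurations listed in the statement. For type $(\mathrm{IV})$ with $\Gamma_p\subset R$ the $3$-vertical type singularity is already present at the triple point. For type $(\mathrm{III})$, the two components of $\Gamma_p\subset R$ meet with intersection number $2$; one blow-up at that point produces an exceptional $E_1$ through which the proper transforms of both components pass at the common point determined by their shared tangent direction, so that the three curves meet at one point on $E_1$. For type $(\mathrm{II})$ the cuspidal component $\Gamma_p\subset R$ is resolved by two successive blow-ups, first at the cusp to give an $E_1$ simply tangent to the proper transform of $\Gamma_p$, and then at this new tangency point to give $E_2$, on which the proper transforms of $\Gamma_p$, $E_1$ and $E_2$ itself finally meet at a single point. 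In both the $(\mathrm{II})$ and $(\mathrm{III})$ cases one must verify via Lemma~\ref{multlem}~(1) that the multiplicities at the centres of the successive blow-ups lie in $n\mathbb{Z}+1$, so that each $E_i$ does belong to $R_i$; this is the main technical obstacle, and boils down to keeping careful track of how the multiplicity of $R$ at each centre is split between the cuspidal or tangential contribution from $\Gamma_p$ and any additional contribution coming from horizontal components or other branches of $R$.
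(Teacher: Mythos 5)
Your proposal is correct and follows the same route the paper implicitly takes: the paper states this lemma without a written proof, as a direct consequence of Kodaira's classification, and your combination of the table with the local analysis of how transverse intersections, the order-two tangency of type $(\mathrm{III})$, and the cusp of type $(\mathrm{II})$ behave under blow-up is exactly the intended argument. One adjustment: the conditions that the relevant multiplicities lie in $n\mathbb{Z}+1$ are not something to \emph{verify} but are part of the characterization --- if a multiplicity lies in $n\mathbb{Z}$ then by Lemma~\ref{multlem}~(1) the exceptional curve is not contained in $R$ and the point is at most $2$-vertical --- so the ``main technical obstacle'' you anticipate is vacuous. You should also record explicitly that a fiber carries at most one cusp, tacnode or triple point and that each such configuration yields at most one $3$-vertical point in the tower of blow-ups, which is what gives the final bound $\iota^{(3)}(F_p)+\kappa^{(3)}(F_p)\le 1$.
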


Next, we give a lower bound of $\alpha^+_0(F_p)$ by using $\iota(F_p)$ and $\kappa(F_p)$.

\begin{lem}\label{alpha0lem}
We have 
$$
\alpha^+_0(F_p)\ge \left(1-\frac{1}{m}\right)r+(n-2)(\iota(F_p)+2\kappa(F_p))+\beta_p.
$$
where $\beta_p:=\delta_{n\neq 2}\left((n-7)\delta_{\mathrm{I\hspace{-.1em}I}}-(n+1)\delta_{\mathrm{I\hspace{-.1em}I\hspace{-.1em}I}}-2\delta_{\mathrm{I\hspace{-.1em}V}}\right)\iota^{(3)}(F_p)+\delta_{n=2}\sum_{k\ge 1}2k\alpha^{\mathrm{co}}_{(2k+1\to 2k+1)}(F_p)$.
\end{lem}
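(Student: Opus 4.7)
The plan is to analyze $\alpha_0^+(F_p)$ locally at each intersection point of the horizontal branch locus $\widetilde{R}_h$ with $\widetilde{\Gamma}_p$, and compare the total with a ``baseline'' contribution $(1-1/m)r$ coming from the multiple-fiber structure. Since $\widetilde{R}$ is smooth, each component of $\widetilde{R}_h$ is smooth, so
\[
\alpha_0^+(F_p) \;=\; \sum_{\widetilde{x}\in \widetilde{R}_h\cap \widetilde{\Gamma}_p} \bigl(i_{\widetilde{x}}(\widetilde{R}_h,\widetilde{\Gamma}_p)-1\bigr).
\]
Every irreducible component of $\widetilde{\Gamma}_p$ has multiplicity at least $m$ in the multiple-fiber structure; hence any intersection $\widetilde{x}$ not associated to a vertical-type singularity contributes at least $m-1$. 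Using $\widetilde{R}_h\cdot\widetilde{\Gamma}_p=r$, this already gives the baseline contribution $(1-1/m)r$.

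The key step is to quantify the excess ramification produced by each $u$-vertical type singularity $x$ of $R$ on $W$. The idea is that the horizontal branches of $R$ through $x$, after being resolved through the blow-ups in $\widetilde{\psi}$, end up meeting an exceptional vertical curve whose multiplicity in $\widetilde{\Gamma}_p$ is (generically) the sum of the fiber-multiplicities of the $u$ vertical components meeting at $x$. Tracking the evolution of the branch-curve multiplicities via Lemma~\ref{multlem}, and using Lemma~\ref{connlem} together with Example~\ref{diagexa} to follow the chain of $n\mathbb{Z}+1$-type singularities (which, by Lemma~\ref{multlem}(1), force an additional exceptional vertical component in $\widetilde{R}_v$ and hence one more blow-up), the per-singularity excess turns out to be $(n-2)(u-1)$ in the $n\mathbb{Z}$ case and $2(n-2)(u-1)$ in the $n\mathbb{Z}+1$ case. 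Summing over all vertical-type singularities produces the main correction $(n-2)(\iota(F_p)+2\kappa(F_p))$.

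The correction $\beta_p$ reflects the fact that the ``generic'' estimate above implicitly assumes that every vertical component through $x$ has multiplicity exactly $m$ in $\Gamma_p$. For fibers of Kodaira type II, III, IV (the only ones supporting a $3$-vertical type singularity by Lemma~\ref{3verlem}), the adjacent components are not generic copies of the multiple reduced fiber but carry prescribed Kodaira multiplicities; substituting these yields the explicit coefficients $(n-7)$, $-(n+1)$, $-2$ attached to $\iota^{(3)}(F_p)$ in $\beta_p$. In the case $n=2$, the term $2k\alpha^{\mathrm{co}}_{(2k+1\to 2k+1)}(F_p)$ arises because a $(2k+1\to 2k+1)$ pair (Example~\ref{diagexa}(1)) is linked by a vertical component, and a horizontal branch transversally crossing that linker picks up $2k$ extra units of intersection with $\widetilde{\Gamma}_p$ at the second singularity of the pair.

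The main obstacle is the local calculation in the second step: the resolution in a neighborhood of a vertical-type singularity interleaves three pieces of data --- the sequence $\{m_{i,j}\}$ of branch-curve multiplicities, the fiber-multiplicities created by successive blow-ups, and the attachment pattern of the $C^{t,k}$'s --- and obtaining a clean split between the universal $(n-2)(u-1)$ contribution and the Kodaira-specific correction $\beta_p$ requires performing this bookkeeping uniformly across all admissible singularity configurations. Once this is done, the special multiplicities for types II, III, IV and the $n=2$ linker analysis produce exactly the stated lower bound.
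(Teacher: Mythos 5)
Your overall strategy is the same as the paper's: write $\alpha_0^+(F_p)=r-\#(\mathrm{Supp}(\widetilde{R}_h)\cap\mathrm{Supp}(\widetilde{\Gamma}_p))$, extract the baseline $(1-1/m)r$ from the fact that every component of $\widetilde{\Gamma}_p$ has multiplicity at least $m$, and then account for the excess via the exceptional curves created over the vertical-type singularities. However, the proof is not actually carried out: the entire quantitative content of the lemma is in the assertion that ``the per-singularity excess turns out to be $(n-2)(u-1)$ in the $n\mathbb{Z}$ case and $2(n-2)(u-1)$ in the $n\mathbb{Z}+1$ case,'' and you give no argument for it --- indeed your closing paragraph concedes that this bookkeeping is ``the main obstacle'' and that the bound follows ``once this is done.'' That is precisely the step that constitutes the proof.

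Two concrete points where your sketch would need repair before it could be completed. First, for a $u$-vertical singularity of type $n\mathbb{Z}+1$ the exceptional curve of the blow-up \emph{is} contained in $R$ (Lemma~\ref{multlem}(1)), so it is a component of $\widetilde{R}_v$, not a curve that $\widetilde{R}_h$ meets with excess multiplicity; the mechanism you describe (horizontal branches meeting an exceptional vertical curve of multiplicity equal to the sum of the adjacent fiber multiplicities) applies directly only to the $n\mathbb{Z}$ case. The paper routes the $\kappa$-contribution entirely through the exceptional curves over the $n\mathbb{Z}$-type vertical singularities, via the inequality $\sum_{x:(t,2)\ n\mathbb{Z}}m^t_x\ge 2\iota^t(F_p)+2\kappa^t(F_p)$ together with $\widetilde{R}_h\widehat{E}^t_x\ge n-2$ (after reducing to the case where $\widehat{E}^t_x$ carries no further singular points); this is where the factor $2$ in front of $\kappa(F_p)$ comes from, and your proposal contains no substitute for it. Second, your derivation of the baseline is imprecise: you restrict to intersection points ``not associated to a vertical-type singularity,'' but $(1-1/m)r$ must be extracted from \emph{all} of $\widetilde{R}_h\cdot\widetilde{\Gamma}_p=r$ (as in the paper's identity $r-\sum_i\widetilde{R}_hG_i=(1-1/m)r+\sum_i(m_i-1)\widetilde{R}_hG_i$), with the excess then isolated in the components of $\widetilde{D}_p$ of multiplicity $m_i>1$; otherwise the baseline and the excess terms are not cleanly additive. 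The identification of the types $(\mathrm{I\hspace{-.1em}I})$, $(\mathrm{I\hspace{-.1em}I\hspace{-.1em}I})$, $(\mathrm{I\hspace{-.1em}V})$ as the source of $\beta_p$, and of the $(2k+1\to 2k+1)$ pairs for $n=2$, is the right idea, but again the specific multiplicities $m^1_{x_0}=6,4,3$ and the resulting coefficients are stated as targets rather than derived.
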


\begin{proof}
Let $\widetilde{\Gamma}_p=m\widetilde{D}_p$ and $\widetilde{D}_p=\sum_{i}m_iG_i$ the irreducible decomposition. Then we have

\begin{align*}
\alpha^+_0(F_p)=&\; r-\#(\mathrm{Supp}(\widetilde{R}_h)\cap \mathrm{Supp}(\widetilde{\Gamma}_p))\\
                   \ge& r-\sum_{i}\widetilde{R}_hG_i \\
                     =&\; \left(1-\frac{1}{m}\right)r+\sum_{i}(m_i-1)\widetilde{R}_hG_i.
\end{align*}
For a $(t,2)$ or $(t,3)$-vertical $n\mathbb{Z}$ type singularity $x$, we denote by $E^t_x$ the exceptional curve for the blow-up at $x$.
Let $m^t_x$ be the multiplicity of $\widetilde{D}_p$ along $\widehat{E}^t_x$, the proper transform of $E^t_x$ on $\widetilde{W}$.
Then, we have
$$
\sum_{i}(m_i-1)\widetilde{R}_hG_i\ge \sum_{t=1}^{\eta_p}\sum_{x\text{:$(t,u)$ $n\mathbb{Z}$, $u\ge 2$}}(m^t_x-1)\widetilde{R}_h\widehat{E}^t_x.
$$
If there exists a singular point of type $n\mathbb{Z}$ on $E^t_x$, we replace $E^t_x$ with the exceptional curve $E$ obtained by blowing up at this point. Repeating this procedure, we may assume that there exist no singular points of type $n\mathbb{Z}$ on $E^t_x$. If there exists  a singular point of type $n\mathbb{Z}+1$ on 
 $E^t_x$, the proper transform of the exceptional curve obtained by blowing 
 up at this point belongs to 
 other $D^u(p)$.
Since the multiplicity of $\widetilde{\Gamma}_p$ along it is not less 
 than $m^t_x>1$, we do not have to consider this situation. 
Thus, we may assume that there exist no singular points on $E^t_x$
and we have $\widetilde{R}_h\widehat{E}^t_x\ge n-2$ if $x$ is a $2$-vertical type singularity,
 and $\widetilde{R}_h\widehat{E}^t_x\ge n-3$ if $x$ is a $3$-vertical type singularity.
We can see that $\sum_{x\text{:$(t,2)$ $n\mathbb{Z}$}}m^t_x\ge 2\iota^t(F_p)+2\kappa^t(F_p)$ for any $t$ with $\iota^{t,(3)}(F_p)=0$. Thus, if $\iota^{(3)}(F_p)=0$, we have
$$
\alpha^+_0(F_p)\ge \left(1-\frac{1}{m}\right)r+(n-2)(\iota(F_p)+2\kappa(F_p)).
$$
If $\iota^{(3)}(F_p)=1$, then $m=1$ and $\Gamma_p$ is of type $(\mathrm{I\hspace{-.1em}I})$, $(\mathrm{I\hspace{-.1em}I\hspace{-.1em}I})$ or $(\mathrm{I\hspace{-.1em}V})$ from Lemma~\ref{3verlem}.
We may assume that $D'^1(p)\neq 0$.
Let $x_0$ be the $3$-vertical $n\mathbb{Z}$ type singularity over $p$.
Suppose that $\Gamma_p$ is of type~$(\mathrm{I\hspace{-.1em}I})$. 
Then, we can see that $m^1_{x_0}=6$ and $\sum_{x\text{:$(1,2)$ $n\mathbb{Z}$}}m^1_x\ge 2\iota^{1,(2)}(F_p)+2(\kappa^1(F_p)-1)$.
Then we have
\begin{align*}
\alpha^+_0(F_p)\ge& \left(1-\frac{1}{m}\right)r+5(n-3)+(n-2)(\iota^{1,(2)}(F_p)+2(\kappa^1(F_p)-1))\\
   &+(n-2)\sum_{t=2}^{\eta_p}(\iota^t(F_p)+2\kappa^t(F_p))\\
                     =&\; \left(1-\frac{1}{m}\right)r+n-7+(n-2)(\iota(F_p)+2\kappa(F_p)).
\end{align*}
Suppose that $\Gamma_p$ is of type~$(\mathrm{I\hspace{-.1em}I\hspace{-.1em}I})$. 
Then, we can see that $m^1_{x_0}=4$ and $\sum_{x\text{:$(1,2)$ $n\mathbb{Z}$}}m^1_x\ge 2\iota^{1,(2)}(F_p)+2(\kappa^1(F_p)-1)$.
Then we have
\begin{align*}
\alpha^+_0(F_p)\ge& \left(1-\frac{1}{m}\right)r+3(n-3)+(n-2)(\iota^{1,(2)}(F_p)+2(\kappa^1(F_p)-1))\\
   &+(n-2)\sum_{t=2}^{\eta_p}(\iota^t(F_p)+2\kappa^t(F_p))\\
                     =&\; \left(1-\frac{1}{m}\right)r-n-1+(n-2)(\iota(F_p)+2\kappa(F_p)).
\end{align*}
Suppose that $\Gamma_p$ is of type~$(\mathrm{I\hspace{-.1em}V})$. 
Then, we can see that $m^1_{x_0}=3$ and $\sum_{x\text{:$(1,2)$ $n\mathbb{Z}$}}m^1_x\ge 2\iota^{1,(2)}(F_p)+2\kappa^1(F_p)$.
Then we have
\begin{align*}
\alpha^+_0(F_p)\ge& \left(1-\frac{1}{m}\right)r+2(n-3)+(n-2)(\iota^{1,(2)}(F_p)+2\kappa^1(F_p))\\
   &+(n-2)\sum_{t=2}^{\eta_p}(\iota^t(F_p)+2\kappa^t(F_p))\\
                     =&\; \left(1-\frac{1}{m}\right)r-2+(n-2)(\iota(F_p)+2\kappa(F_p)).
\end{align*}
Suppose that $n=2$. For a $(2k+1\to 2k+1)$ singularity $(x,y)$, let $E_y$ denotes the exceptional curve for the blow-up at $y$ and $m_y$ the multiplicity of $\widetilde{\Gamma}_p$ along $\widehat{E}_y$, the proper transform of $E_y$.
Then we have
$$
\sum_{i}(m_i-1)\widetilde{R}_hG_i\ge \sum_{k\ge 1}\sum_{(x,y)\text{:$(2k+1\to 2k+1)$}}(m_y-1)\widetilde{R}_h\widehat{E}_y.
$$
By an argument similar to the above, we may assume that there are no singular points on $E_y$.
Then we have $\widetilde{R}_h\widehat{E}_y=2k$ for any $(2k+1\to 2k+1)$ singularity $(x,y)$.
On the other hand, we have $m_y\ge 2$ for any $(2k+1\to 2k+1)$ singularity $(x,y)$ involved in $\alpha^{\mathrm{co}}_{(2k+1\to 2k+1)}(F_p)$.
Thus, we obtain
$\sum_{(x,y)\text{:$(2k+1\to 2k+1)$}}(m_y-1)\widetilde{R}_h\widehat{E}_y
\ge 2k\alpha^{\mathrm{co}}_{(2k+1\to 2k+1)}(F_p)$.

\end{proof}

We can translate the index $\alpha''$ into other indices as follows.

\begin{lem} \label{alphaklem}
The following equalities hold.

\begin{align}
\sum_{k\ge 1}\alpha''_k(F_p)=&\; \eta''_p+\sum_{a\ge 1}anj'_{1,a}(F_p)+\sum_{a\ge 1}(an-2-\delta_{{}_m\mathrm{I}_1,\mathrm{I\hspace{-.1em}I}})j'_{0,a}(F_p) \label{alphakeq}\\
&+\sum_{a\ge 1}(an-1)j''_{0,a}(F_p)-\iota(F_p)-\kappa(F_p) \nonumber.
\end{align}

\begin{align}
\sum_{k\ge 1}k\alpha''_k(F_p)=&\; \gamma_p+\sum_{a\ge 1}aj_{\bullet,a}(F_p)
+\sum_{k\ge 1}k\left(\alpha_k^{n\mathbb{Z}+1}(F_p)-\iota_k(F_p)-\kappa_k(F_p)\right) \label{kalphakeq}.
\end{align}
where $\gamma_p=\sum_{t=1}^{\eta_p}\gamma^{t}_p$ 
and $\gamma^{t}_p$ is defined to be the following $(1)$, $(2)$, $(3)$$:$

\smallskip

\noindent
$(1)$ $\gamma^{t}_p=d^{t,1}$ if $D'^t(p)=0$, where $m^{t,1}$ is the multiplicity of the singularity to which $C^{t,1}$ contracts and $d^{t,1}=[m^{t,1}/n]$.

\smallskip

\noindent
$(2)$ $\gamma^{t}_p=\sum_{k=1}^{j'^t(F_p)}RC'^{t,k}/n$ if $D'^t(p)\neq 0$ and any $C'^{t,k}$ is smooth.

\smallskip

\noindent
$(3)$ $\gamma^{t}_p=r/mn-d'^{t,1}$ if $C'^{t,1}=(\Gamma_p)_{\mathrm{red}}$ is singular, where $m'^{t,1}$ is the multiplicity of the singular point of $R$ which is singular for $C'^{t,1}$ and $d'^{t,1}=[m'^{t,1}/n]$.
\end{lem}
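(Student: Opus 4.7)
The approach is to prove both identities locally on each family $D^t(p)$ and then sum over $t=1,\dots,\eta_p$. The central input is Lemma~\ref{easylem}, applied to each component $C^{t,k}$ and its proper transform $L^{t,k}\subset\widetilde{R}$: writing $(L^{t,k})^2=-a_{t,k}n$, which contributes to $j^{t}_{b,a_{t,k}}(F_p)$ with $b$ the genus of $C^{t,k}$, the lemma gives
\begin{equation*}
\frac{R\,C^{t,k}-(L^{t,k})^2}{n}\;=\;\sum_{x\in C^{t,k}\cap R'}\mathrm{mult}_{x}(C^{t,k})\,d_{x},
\end{equation*}
where $d_{x}=[\mathrm{mult}_{x}R/n]$ and $d_x=k$ exactly when $x$ contributes to $\alpha''_k(F_p)$.

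For the weighted identity \eqref{kalphakeq}, I would sum this identity over all $k$ within each family and then over $t$. The self-intersection contributions give $\sum_a a\,j_{\bullet,a}(F_p)$. A $(t,u)$-vertical singularity $x$ lies on exactly $u$ of the $C^{t,k}$ with $\mathrm{mult}_x(C^{t,k})=1$ in the generic case, so is over-counted by $u-1$ relative to $\sum_k k\alpha''_k(F_p)$; this matches precisely the definitions $\iota=\sum_u(u-1)\iota^{(u)}$ and $\kappa=\sum_u(u-1)\kappa^{(u)}$, producing the $-\sum_k k(\iota_k+\kappa_k)$ correction. The remaining intersection sum $\sum_k R\,C^{t,k}/n$ splits as follows: for each exceptional $C^{t,k}$ (born by blowing up an $n\mathbb{Z}+1$ singularity, which yields a curve in $R$ by Lemma~\ref{multlem}), $R\,C^{t,k}/n$ equals the $d$-value of the contraction point; summing over all such exceptional components of $\widetilde{R}_v(p)$ yields $\sum_k k\alpha^{n\mathbb{Z}+1}_k(F_p)$, since every $n\mathbb{Z}+1$ singularity over $p$ spawns a vertical exceptional curve in $R$. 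The three cases in the definition of $\gamma^{t}_{p}$ then collect the remaining ``root'' contribution: $\gamma^{t}_{p}=d^{t,1}$ in case (1) accounts for the initial singularity $x_0$ involved in $D^t(p)$ but on no $C^{t,k}$; $\gamma^{t}_{p}=\sum_{k}R\,C'^{t,k}/n$ in case (2) gathers the smooth fiber-component intersections; $\gamma^{t}_{p}=r/(mn)-d'^{t,1}$ in case (3) reflects $R\,C'^{t,1}=r/m$ but subtracts $d'^{t,1}$ because the multiplicity-$2$ cusp/node of $(\Gamma_p)_{\mathrm{red}}$ (possible only for types $\mathrm{I\hspace{-.1em}I}$ and ${}_{m}\mathrm{I}_{1}$) forces this adjustment via Lemma~\ref{easylem} with $\mathrm{mult}_{x_0}(C'^{t,1})=2$.

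For the unweighted identity \eqref{alphakeq}, I would run a parallel counting argument, counting the points of $C^{t,k}\cap R'$ directly instead of weighting by $d_x$. For a smooth $L^{t,k}\subset\widetilde{R}$ of genus $b$ with $(L^{t,k})^{2}=-an$, an adjunction-type identity on $\widetilde{W}$ together with Lemma~\ref{easylem} produces the coefficients: $an$ for a genus-$1$ fiber component (since $2g-2=0$), $an-2$ for a smooth genus-$0$ fiber component (since $2g-2=-2$), and $an-1$ for an exceptional component contributing to $j''_{0,a}$, the $-1$ reflecting the single obligatory incidence linking it to its parent component. The term $\eta''_p$ counts the initial singularities $x_0$ from families of case (1) that lie on no $C^{t,k}$; the $-\delta_{{}_{m}\mathrm{I}_{1},\mathrm{I\hspace{-.1em}I}}$ factor corrects the $j'_{0,a}$ coefficient when the reduced fiber $(\Gamma_p)_{\mathrm{red}}$ lies in $R$ and is singular; and $-\iota-\kappa$ absorbs the multi-vertical over-counts exactly as in the weighted case.

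The main obstacle will be the systematic bookkeeping across the three regimes defining $\gamma^{t}_{p}$ and across Kodaira's list of singular-fiber types. Lemma~\ref{3verlem} is essential for bounding multi-vertical configurations (forcing $u\le 3$, with $u=3$ only for fiber types $\mathrm{I\hspace{-.1em}I}$, $\mathrm{I\hspace{-.1em}I\hspace{-.1em}I}$, and $\mathrm{I\hspace{-.1em}V}$), and Kodaira's classification confirms that singular reduced fiber components in $R$ occur only for types ${}_{m}\mathrm{I}_{1}$ and $\mathrm{I\hspace{-.1em}I}$, so no further $\delta$-corrections are required.
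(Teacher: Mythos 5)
Your proposal is correct and follows essentially the same route as the paper's proof: both identities are established family-by-family on the decomposition $\widetilde{R}_v(p)=\sum_t D^t(p)$, with Lemma~\ref{easylem} applied to each $C'^{t,k}$ and $C''^{t,k}$ for the weighted identity, the blow-up count $c=C^2+an$ (equivalently your adjunction computation of $C^2=2g-2$ for fiber components) for the unweighted one, the $u$-fold incidences absorbed into $-\iota-\kappa$, and the three regimes of $\gamma^t_p$ handled exactly as you describe. The only small inaccuracy is that Lemma~\ref{3verlem} plays no role here, since $\iota$ and $\kappa$ are defined so that the identities hold for arbitrary $u$, but this does not affect the argument.
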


\begin{proof}
Let $a'^{t,k}$, $a''^{t,k}$ be the integers such that $(L'^{t,k})^2=-a'^{t,k}n$, $(L''^{t,k})^2=-a''^{t,k}n$. 
If $C'^{t,k}$ is smooth, then $C'^{t,k}$ is blown up $a'^{t,k}n+(C'^{t,k})^2$ times.
If $C'^{t,k}$ is a singular rational curve, then $C'^{t,k}$ is blown up $a'^{t,k}n-3$ times.
Since $C''^{t,k}$ is a $(-1)$-curve, $C''^{t,k}$ is blown up $a''^{t,k}n-1$ times.
Hence, if $D'^t(p)\neq 0$ and every $C'^{t,k}$ is smooth (resp. $C'^{t,1}$ is singular rational), the number of singular points associated with $D^t(p)$ is $\sum_{k}\left(a'^{t,k}n+(C'^{t,k})^2\right)+$ (resp. $\sum_{k}\left(a'^{t,k}n-3\right)+$)
$\sum_{k}\left(a''^{t,k}n-1\right)-\iota^t(F_p)-\kappa^t(F_p)$. Namely, we have
$$
\sum_{k\ge 1}\alpha''^t_k(F_p)=\sum_{a\ge 1}anj'^t_{1,a}(F_p)+\sum_{a\ge 1}(an-2-\delta_{{}_m\mathrm{I}_1,\mathrm{I\hspace{-.1em}I}})j'^t_{0,a}(F_p)
+\sum_{a\ge 1}(an-1)j''^t_{0,a}(F_p)-\iota^t(F_p)-\kappa^t(F_p).
$$
If $D'^t(p)= 0$, we have
$$
\sum_{k\ge 1}\alpha''^t_k(F_p)=1
+\sum_{a\ge 1}(an-1)j''^t_{0,a}(F_p)-\iota^t(F_p)-\kappa^t(F_p).
$$
Summing up for $t=1,\dots,\eta_p$, we have \eqref{alphakeq}.

Let $r^{t,k}=RC'^{t,k}$, $m^{t,k}$ the multiplicity of $R$ at the point to which $C''^{t,k}$ is contracted and $d^{t,k}=[m^{t,k}/n]$. 
Let $x'^{t,k}_1,\dots,x'^{t,k}_{c'}$ (resp. $x''^{t,k}_1,\dots,x''^{t,k}_{c''}$) be all the singular points on $C'^{t,k}$ (resp. on $C''^{t,k}$), including infinitely near ones.
Put $m'^{t,k}_i=\mathrm{mult}_{x'^{t,k}_i}(R)$, $d'^{t,k}_i=[m'^{t,k}_i/n]$, $m''^{t,k}_i=\mathrm{mult}_{x''^{t,k}_i}(R)$ and $d''^{t,k}_i=[m''^{t,k}_i/n]$.
Applying Lemma~\ref{easylem} to $C'^{t,k}$ and $C''^{t,k}$, we get that
$r^{t,k}/n+a'^{t,k}=\sum_{i}d'^{t,k}_i$ if $C'^{t,k}$ is smooth,
$r^{t,1}/n+a'^{t,1}=d'^{t,1}_1+\sum_{i}d'^{t,1}_i$ if $C'^{t,1}$ is singular rational,
 and $d^{t,k}+a''^{t,k}=\sum_{i}d''^{t,k}_i$.
If $D'^t(p)\neq 0$ and every $C'^{t,k}$ is smooth, then 
\begin{align*}
\sum_{k}\frac{r^{t,k}}{n}+\sum_{a\ge 1}aj^t_{\bullet,a}(F_p)+\sum_{k\ge 1}k\alpha^{t,n\mathbb{Z}+1}_k(F_p)=&\;
\sum_{k}\left(\frac{r^{t,k}}{n}+a'^{t,k}\right)+\sum_{k}\left(d^{t,k}+a''^{t,k}\right)\\
=&\; \sum_{k}\sum_{i}d'^{t,k}_i+\sum_{k}\sum_{i}d''^{t,k}_i\\
=&\; \sum_{k\ge 1}k\left(\alpha''^t_k(F_p)+\iota^t_k(F_p)+\kappa^t_k(F_p)\right).
\end{align*}
Similarly, if $D'^t(p)\neq 0$ and $C'^{t,1}$ is singular rational, we have
$$
\frac{r}{nm}-d'^{t,1}_1+\sum_{a\ge 1}aj^t_{\bullet,a}(F_p)+\sum_{k\ge 1}k\alpha^{t,n\mathbb{Z}+1}_k(F_p)
=\sum_{k\ge 1}k\left(\alpha''^t_k(F_p)+\iota^t_k(F_p)+\kappa^t_k(F_p)\right).
$$
If $D'^t(p)=0$, then 
\begin{align*}
\sum_{a\ge 1}aj^t_{\bullet,a}(F_p)+\sum_{k\ge 1}k\alpha^{t,n\mathbb{Z}+1}_k(F_p)=&\;
\sum_{k}\left(d^{t,k}+a''^{t,k}\right)\\
=&\; \sum_{k}\sum_{i}d''^{t,k}_i\\
=&\; \sum_{k\ge 1}k\left(\alpha''^t_k(F_p)+\iota^t_k(F_p)+\kappa^t_k(F_p)\right)-d^{t,1}.
\end{align*}
Summing up for $t=1,\dots,\eta_p$, we get \eqref{kalphakeq}.
\end{proof}

\begin{lem} \label{gammaiotalem}
The following hold.

\begin{align*}
& \gamma_p\le \left(\frac{r}{n}-j'_{0,1}(F_p)\delta_{n=2}-\delta_{{}_m\mathrm{I}_1,\mathrm{I\hspace{-.1em}I}}\right)\delta_{\eta'_p\neq 0}+\left(\frac{r}{n}-1\right)\eta''_p. \\
& \iota(F_p)= j(F_p)-\eta_p+\delta_{\mathrm{cyc}},
\end{align*}
where $\delta_{\mathrm{cyc}}$ is defined to be $1$ if the following $(1)$, $(2)$, $(3)$ and $(4)$ hold 
and $\delta_{\mathrm{cyc}}=0$ otherwise.

\smallskip

\noindent
$(1)$ $\Gamma_p$ is a singular fiber of type $({}_m\mathrm{I}_k)_{k\ge 1}$, $(\mathrm{I\hspace{-.1em}I})$, $(\mathrm{I\hspace{-.1em}I\hspace{-.1em}I})$ or $(\mathrm{I\hspace{-.1em}V})$.

\smallskip

\noindent
$(2)$ Any irreducible component of $\Gamma_p$ is contained in $R$.

\smallskip

\noindent
$(3)$ $\iota^{(3)}(F_p)=\kappa^{(3)}(F_p)=0$.

\smallskip

\noindent
$(4)$ The multiplicity of the singular point of $R$ which is singular for $(\Gamma_p)_{\mathrm{red}}$ belongs to $n\mathbb{Z}+1$ if $\Gamma_p$ is a singular fiber of type $({}_m\mathrm{I}_1)$ or $(\mathrm{I\hspace{-.1em}I})$.
\end{lem}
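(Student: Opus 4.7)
The plan is to prove both parts of Lemma~\ref{gammaiotalem} by a case analysis over the $\eta_p$ families in the decomposition $\widetilde{R}_v(p)=D^1(p)+\cdots+D^{\eta_p}(p)$, using the structural description of each family via its sequence of singularity diagrams $\mathcal{D}^{t,1},\ldots,\mathcal{D}^{t,j^t(F_p)}$ together with Lemmas~\ref{multlem}, \ref{tclem}, \ref{easylem}, \ref{connlem} and \ref{3verlem}.

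For the bound on $\gamma_p$, I would split the sum $\gamma_p=\sum_t \gamma^t_p$ according to whether $D'^t(p)=0$. For each $t$ contributing to $\eta''_p$ (i.e.\ $D'^t(p)=0$), the family begins with an exceptional $(-1)$-curve $C''^{t,1}$ contracted to a point $x^{t,1}$ of $R$; since its proper transform lies in $\widetilde{R}$, Lemma~\ref{multlem}(1) forces $m^{t,1}\in n\mathbb{Z}+1$. As no fiber component lying in $R$ passes through $x^{t,1}$ (by $D'^t(p)=0$), the multiplicity $m^{t,1}$ arises entirely from horizontal branches of $R$, hence $m^{t,1}\le R_h\cdot\Gamma=r$. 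Combined with $m^{t,1}\equiv 1\pmod n$ and $r\equiv 0\pmod n$, this forces $m^{t,1}\le r-n+1$, so $\gamma^t_p=d^{t,1}\le r/n-1$, which yields the second summand $(r/n-1)\eta''_p$. For the remaining $t$'s with $D'^t(p)\neq 0$, I would use that $\sum_{t,k}C'^{t,k}$ is a sub-divisor of $(\Gamma_p)_{\mathrm{red}}$, so the main contribution is bounded by $R\cdot(\Gamma_p)_{\mathrm{red}}/n\le r/n$. The correction $-j'_{0,1}(F_p)\delta_{n=2}$ records that when $n=2$ each $(-2)$-component in $R$ absorbs one unit of the count through a $(2k+1\!\to\!2k+1)$-type singularity on the boundary with its exceptional chain, while $-\delta_{{}_m\mathrm{I}_1,\mathrm{II}}$ accounts for the $-d'^{t,1}$ appearing in case (3) of the definition of $\gamma^t_p$, which activates precisely when $(\Gamma_p)_{\mathrm{red}}$ is singular --- for a relatively minimal elliptic surface, only in fibers of type $({}_m\mathrm{I}_1)$ or $(\mathrm{II})$.

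For the equality $\iota(F_p)=j(F_p)-\eta_p+\delta_{\mathrm{cyc}}$, the key idea is to view each family $D^t(p)$ as a connected graph whose vertices are the curves $\{C^{t,k}\}_k$ and whose edges are supplied by vertical $n\mathbb{Z}$-type singularities: a $(t,u)$-vertical $n\mathbb{Z}$-type singularity joins $u$ distinct vertices at one common point and therefore contributes exactly $u-1$ tree-edges. The total edge count thus equals $\iota^t(F_p)=\sum_u(u-1)\iota^{t,(u)}(F_p)$. By Lemma~\ref{connlem} each vertical $n\mathbb{Z}+1$-type singularity is succeeded by a blow-up producing the next curve in the family, and the new intersection with the proper transform is a $n\mathbb{Z}$-type singularity whose edge contribution is already captured above. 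So when $D^t(p)$ is a tree we obtain $\iota^t(F_p)=j^t(F_p)-1$, and summing gives $\iota(F_p)=j(F_p)-\eta_p$, matching the $\delta_{\mathrm{cyc}}=0$ case.

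The correction $\delta_{\mathrm{cyc}}=1$ arises exactly when a single family $D^t(p)$ carries one extra cycle-closing edge. Using Lemma~\ref{3verlem} and Kodaira's classification, this can happen only when $\Gamma_p$ is one of the cycle-like fibers $({}_m\mathrm{I}_k)$, $(\mathrm{II})$, $(\mathrm{III})$, $(\mathrm{IV})$ with all components lying in $R$ (conditions (1) and (2)). Condition (3) rules out the $3$-vertical scenarios of Lemma~\ref{3verlem}, where types $(\mathrm{II})$, $(\mathrm{III})$, $(\mathrm{IV})$ would resolve as trees terminating at a $3$-vertical singularity rather than closing a loop; condition (4) guarantees for types $({}_m\mathrm{I}_1)$ and $(\mathrm{II})$ that the nodal or cuspidal point of $(\Gamma_p)_{\mathrm{red}}$ actually gets blown up (multiplicity in $n\mathbb{Z}+1$), so the necessary exceptional curve enters $D^t(p)$ to realise the cycle. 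The main obstacle is this combinatorial verification: for every Kodaira fiber type satisfying (1)--(4) one must check by inspection of the diagrams $\mathcal{D}^{t,k}$ that exactly one extra cycle-closing edge appears, raising $\iota^t(F_p)$ to $j^t(F_p)$, and conversely that in every other configuration the diagrams assemble into a tree so that $\iota^t(F_p)=j^t(F_p)-1$.
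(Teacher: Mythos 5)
Your argument for the identity $\iota(F_p)=j(F_p)-\eta_p+\delta_{\mathrm{cyc}}$ is essentially the paper's own proof: one forms, for each family $D^t(p)$, the connected graph on the vertices $C^{t,k}$ with one edge per $2$-vertical and two edges per $3$-vertical $n\mathbb{Z}$-type singularity, so that $\iota^t(F_p)$ is the number of edges and $\iota^t(F_p)-j^t(F_p)+1$ the number of independent cycles, and then checks via Kodaira's classification that at most one such graph has a cycle and that this happens exactly under conditions $(1)$--$(4)$. For the first inequality the paper simply declares it clear from the definition of $\gamma_p$, and your case split is the right way to unwind it; in particular your bound $d^{t,1}\le r/n-1$ for the $\eta''_p$-terms (using $m^{t,1}\in n\mathbb{Z}+1$, $m^{t,1}\le r$ and $r\in n\mathbb{Z}$) is correct. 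The one place your justification does not hold up is the term $-j'_{0,1}(F_p)\delta_{n=2}$: for $n=2$ a component $C$ counted by $j'_{0,1}(F_p)$ is a $(-2)$-curve of $\Gamma_p$ contained in $R$ that is blown up zero times, hence carries no singular point of $R$ at all, so it cannot ``absorb a unit'' through a $(2k+1\to 2k+1)$-singularity as you claim. The correct mechanism is elementary: by Lemma~\ref{easylem} (with no blow-ups) one has $RC=L^2=-2$, so each such component contributes $-1$ to $\gamma^t_p=\sum_k RC'^{t,k}/2$, which is exactly where the correction $-j'_{0,1}(F_p)$ comes from. With that repair your treatment of the first inequality also matches what the paper intends.
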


\begin{proof}
By the definition of $\gamma_p$, the first inequality is clear.
We consider the following graph $\mathbf{G}^t$:
The vertex set $V(\mathbf{G}^t)$ is defined by the symbol set $\{v^{t,k}\}_{k=1}^{j^t(F_p)}$.
The edge set $E(\mathbf{G}^t)$ is defined by the symbol set $\{e_x\}_{x}\cup \{e_y\}_{y}\cup \{e'_y\}_{y}$, where $x$, $y$ respectively move among $(t,2)$, $(t,3)$-vertical $n\mathbb{Z}$ type singularities. If the proper transform of $C^{t,k}$ meets that of $C^{t,k'}$ at a $(t,2)$-vertical $n\mathbb{Z}$ type singularity $x$, the edge $e_x$ connects $v^{t,k}$ and $v^{t,k'}$.
If the proper transforms of $C^{t,k}$, $C^{t,k'}$ and $C^{t,k''}$ ($k<k'<k''$) intersects in a $(t,3)$-vertical $n\mathbb{Z}$ type singularity $y$, the edge $e_y$ connects $v^{t,k}$ and $v^{t,k'}$, and $e'_y$ connects $v^{t,k'}$ and $v^{t,k''}$.
By the definition of the decomposition $\widetilde{R}_v(p)=D^1(p)+\cdots+D^{\eta_p}(p)$, the graph $\mathbf{G}^t$ is connected for any $t=1,\dots, \eta_p$.
Clearly, $\iota(F_p)$ is the cardinality of $E(\mathbf{G}^t)$.
Thus, the number of cycles in $\mathbf{G}^t$ is $\iota^t(F_p)-j^t(F_p)+1$.
One sees that $\mathbf{G}^t$ has at most one cycle,
 and it has one cycle only if $\{C^{t,k}\}_k$ contains all irreducible components of $\Gamma_p$.
Hence at most one $\mathbf{G}^t$ has one cycle.
We can see that $\mathbf{G}^t$ has one cycle for some $t$ if and only if $\delta_{\mathrm{cyc}}=1$.
Thus, we get $\iota(F_p)=j(F_p)-\eta_p+\delta_{\mathrm{cyc}}$.
\end{proof}

For any singular point $x$ of $R$, the multiplicity $\mathrm{mult}_x(R)$ at $x$ does not exceed $r/m+1$ since $R(\Gamma_p)_{\mathrm{red}}=r/m$.
Thus we have $\alpha_k=0$ for $k\ge r/nm+1$.
Moreover, the following lemma holds.

\begin{lem}\label{chainlem}
If $n\ge 3$, then we have $\alpha_{\frac{r}{nm}}^{n\mathbb{Z}+1}(F_p)=0$.
If $n=2$, then we have $\kappa_{\frac{r}{2m}}(F_p)=0$.
\end{lem}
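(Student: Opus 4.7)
The plan is to argue by contradiction. Suppose there is a singularity $x$ of $R$ over $p$ with $\mathrm{mult}_{x}(R)=r/m+1=nk+1$ where $k=r/(nm)$, and in the $n=2$ case suppose additionally that $x$ is of $u$-vertical type with $u\geq 2$. The argument works uniformly whether $x$ lies on $W$ or is infinitely near, since the invariants $r$ and $m$ and the intersection theory with the fiber persist through blow-ups.

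I would first establish a rigid local structure at $x$. The intersection bound $\mathrm{mult}_{x}(R)\cdot \mathrm{mult}_{x}(\Gamma_p)\leq R\cdot\Gamma_p=r$, valid when $R$ and $\Gamma_p$ share no component through $x$, together with $\mathrm{mult}_{x}(\Gamma_p)\geq m$, would give $\mathrm{mult}_{x}(R)\leq r/m$, contradicting our assumption. Hence some fiber component $G$ of $\Gamma_p$ through $x$ must be contained in $R$. Since $R$ is reduced, $G$ has multiplicity one in $R$; writing $R=G+R'$ with $G\not\subset R'$ and applying the analogous bound $\mathrm{mult}_{x}(R')\leq R'\cdot G\leq r/b$, where $b\geq m$ is the fiber multiplicity of $G$, the extremal value $r/m+1$ pins down $b=m$, $G$ smooth at $x$, $\mathrm{mult}_{x}(R')=r/m$, and the intersection $R'\cdot G=r/m$ concentrated entirely at $x$.

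Next, blowing up $\psi\colon W_{1}\to W$ at $x$, Lemma~\ref{multlem}(1) places the exceptional curve $E_{x}$ into $R_{1}$, while the standard blow-up formula shows that the proper transforms of $G$ and $R'$ on $W_{1}$ have intersection number
\begin{equation*}
G\cdot R'-\mathrm{mult}_{x}(G)\cdot\mathrm{mult}_{x}(R')=\tfrac{r}{m}-\tfrac{r}{m}=0,
\end{equation*}
so they are disjoint on $W_{1}$. Consequently, at the unique intersection point $y$ of the proper transform of $G$ with $E_{x}$, only those two smooth transverse curves meet and both enter $R_{1}$ with multiplicity one, yielding $\mathrm{mult}_{y}(R_{1})=2$. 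But Lemma~\ref{multlem}(1) demands $\mathrm{mult}_{y}(R_{1})\in n\mathbb{Z}\cup(n\mathbb{Z}+1)$, which fails for $n\geq 3$. This contradiction proves the first assertion.

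For $n=2$ the multiplicity-$2$ singularity at $y$ is admissible, so the contradiction must come from the $u\geq 2$ hypothesis. Under it, two distinct fiber components $G_{1},G_{2}\subset R$ of $\Gamma_{p}$ pass through $x$, and the same intersection argument applied to $R-G_{1}-G_{2}$ sharpens the multiplicity bound to $\mathrm{mult}_{x}(R)\leq 2+r/(b_{1}+b_{2})\leq 2+r/(2m)$. Combined with the extremal value $r/m+1$, this forces $r\leq 2m$, i.e.\ $g-1\leq m$ in view of $r=2(g-1)$. Only finitely many Kodaira fiber types can then support two multiplicity-$m$ components meeting at a common point, and each remaining configuration is to be excluded by direct enumeration. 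The main obstacle will be this small-$r$ case analysis in the $n=2$ setting, particularly when $x$ is infinitely near and the second component $G_{2}$ may be played by an exceptional $(-1)$-curve previously created on the $n\mathbb{Z}+1$ track; tracking this propagation will require careful use of the singularity-diagram structure from Lemma~\ref{connlem}.
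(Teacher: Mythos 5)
Your argument for the case $n\ge 3$ is correct and is essentially the proof in the paper: the extremal multiplicity $r/m+1$ forces a fiber component $G\subset R$ through $x$ with all of $R'\cdot G$ concentrated transversally at $x$, so after one blow-up the point $E_x\cap\widehat{G}$ is a singular point of $R_1$ of multiplicity exactly $2$, which violates Lemma~\ref{multlem} unless $n=2$. (The paper phrases the transversality as ``no local horizontal branch of $R$ is tangential to $C$''; your intersection-number bookkeeping amounts to the same thing.)

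The case $n=2$, however, is where the actual content of the second assertion lies, and there your proposal has a genuine gap. You reduce to the situation where two vertical components of $R$ pass through $x$, deduce $r\le 2m$, and then defer the conclusion to a ``direct enumeration'' of Kodaira types that you do not carry out; you yourself flag the infinitely near case as an unresolved ``main obstacle.'' Neither piece is routine. When $x$ is infinitely near, the two vertical curves through $x$ need not be components of $\Gamma_p$ at all (they can be exceptional curves of earlier blow-ups), so the bound $\mathrm{mult}_x(R)\le 2+r/(b_1+b_2)$, which rests on intersecting with the fiber, does not apply as stated, and the $(-1)$-curve bookkeeping you would need is exactly the part left undone. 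The paper avoids both problems by a different mechanism: having pinned down the local picture at the first extremal singularity (a single smooth vertical curve of $R$ through $x$ together with $r/m$ transverse horizontal branches), it observes that after blowing up, a point of multiplicity $r/m+1$ can only reappear on the new exceptional curve $E_x$, at a point away from $\widehat{G}$ and from every other vertical component, where the $r/m$ horizontal branches reconverge; iterating, all singular points of multiplicity $r/m+1$ lie on a chain of exceptional curves and each of them meets exactly one vertical curve of $R$, i.e.\ is of $1$-vertical type, which is precisely $\kappa_{r/2m}(F_p)=0$. This propagation (``chain'') argument is what replaces your case analysis; without it, or a completed enumeration that also covers the infinitely near points, the second assertion is not proved.
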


\begin{proof}
If $\alpha_{\frac{r}{nm}}^{n\mathbb{Z}+1}(F_p)\neq 0$, then there exists an irreducible component $C$ of $\Gamma_p$ contained in $R$ and a singular point $x$ of $R$ on $C$ with multiplicity $r/m+1$ such that
 any local horizontal branch of $R$ around $x$ is not tangential to $C$
since $RD_p=r/m$.
Then, the exceptional curve $E$ for the blow-up at $x$ and the proper transform of $C$ form a singular point of multiplicity $2$. 
Hence we have $n=2$ from Lemma~\ref{multlem}.
It is clear that all singular points with multiplicity $r/m+1$ are infinitely near to $x$ and the exceptional curves for blow-ups of these singularities form a chain.
In particular, any singular point with multiplicity $r/m+1$ is a $1$-vertical type singularity.
\end{proof}

To prove Theorem~\ref{h1upperthm}, we need some inequalities among several indices.

\begin{lem}\label{clearlem}
$(1)$ The following holds.

\begin{align*}
\sum_{k\ge 1}k \left(\alpha^{n\mathbb{Z}+1}_k(F_p)-\kappa_k(F_p)\right)\le& \left(\frac{r}{n}-1\right)\left(j''(F_p)-\kappa(F_p)\right) \\
& +\left(\frac{r}{n}-2\right)\kappa^{(3)}(F_p)+\alpha_{\frac{r}{nm}}^{n\mathbb{Z}+1}(F_p).
\end{align*}

\smallskip

\noindent
$(2)$ If $n=2$, then the following holds more strongly.

\begin{align*}
\sum_{k\ge 1}k \left(\alpha^{2\mathbb{Z}+1}_k(F_p)-\kappa_k(F_p)\right)\le& \sum_{k\ge 1}k\alpha_{(2k+1\to 2k+1)}(F_p)+
\left(\frac{r}{2}-1\right)\left(\sum_{a\ge 2}j''_{0,a}(F_p)-\kappa(F_p)\right) \\
& +\left(\frac{r}{2}-2\right)\kappa^{(3)}(F_p)+\alpha_{\frac{r}{2m}}^{2\mathbb{Z}+1}(F_p).
\end{align*}

\end{lem}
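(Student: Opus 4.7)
The proof of (1) rests on two structural reductions and an elementary multiplicity estimate, after which the inequality collapses to a routine algebraic check. First I would establish that $\alpha'^{n\mathbb{Z}+1}_k(F_p)=0$ for every $k$: by Lemma~\ref{multlem}(1), each $n\mathbb{Z}+1$ type singularity $x$ of $R$ on $\Gamma_p$ produces an exceptional curve $E_x\subset R$; since $x$ lies in a fiber of $\varphi$, $E_x$ is vertical and hence belongs to some $D^t(p)$, making $x$ involved in $\widetilde{R}_v(p)$. The correspondence $x\mapsto E_x$ is moreover a bijection between $n\mathbb{Z}+1$ singularities and the exceptional $(-1)$-curves enumerated by $j''(F_p)$, giving the identity $j''(F_p)=\sum_{k}\alpha^{n\mathbb{Z}+1}_k(F_p)$. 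Next, Lemma~\ref{3verlem} classifies every such singularity as $u$-vertical for $u\in\{1,2,3\}$; writing $\kappa^{(u)}_k:=\sum_t\kappa^{t,(u)}_k$, this yields
\[
\alpha^{n\mathbb{Z}+1}_k=\kappa^{(1)}_k+\kappa^{(2)}_k+\kappa^{(3)}_k,\qquad \kappa_k=\kappa^{(2)}_k+2\kappa^{(3)}_k,
\]
so that $\alpha^{n\mathbb{Z}+1}_k-\kappa_k=\kappa^{(1)}_k-\kappa^{(3)}_k$ and $j''-\kappa=\kappa^{(1)}-\kappa^{(3)}$.

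Substituting these identities into (1), a direct computation shows the target inequality is equivalent to
\[
\sum_{k\ge 1}\left(k-\tfrac{r}{n}+1\right)\kappa^{(1)}_k(F_p)\le\sum_{k\ge 1}(k-1)\kappa^{(3)}_k(F_p)+\alpha^{n\mathbb{Z}+1}_{r/nm}(F_p).
\]
Because $R\cdot(\Gamma_p)_{\mathrm{red}}=r/m$, any singularity of $R$ has multiplicity at most $r/m+1$, so $k\le r/(nm)\le r/n$ with $r/n$ an integer by \eqref{r}. The coefficient $k-r/n+1$ is therefore $\le 0$ except at the boundary $k=r/n$, which is attained only if $m=1$; in that case $r/nm=r/n$ and $\kappa^{(1)}_{r/n}\le\alpha^{n\mathbb{Z}+1}_{r/n}=\alpha^{n\mathbb{Z}+1}_{r/nm}$, while the right side is already non-negative. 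This settles (1).

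For (2), the same strategy applies but we sharpen the bound using the extra identity \eqref{n2j''01eq}, which expresses $j''_{0,1}(F_p)=\sum_k\alpha_{(2k+1\to 2k+1)}(F_p)$. Splitting $j''=j''_{0,1}+\sum_{a\ge 2}j''_{0,a}$, each $(2k+1\to 2k+1)$ singularity is in bijection with an exceptional $(-2)$-curve in $\widetilde{R}_v(p)$, and the associated $2\mathbb{Z}+1$ singularity has $k$-index exactly $k$; this lets us replace the coarse uniform bound $(r/2-1)$ used for $j''_{0,1}$ in part (1) by the sum $\sum_k k\,\alpha_{(2k+1\to 2k+1)}(F_p)$. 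The algebraic reduction then proceeds as before, and the boundary analysis at $k=r/(2m)$ is handled by the vanishing $\kappa_{r/2m}(F_p)=0$ from Lemma~\ref{chainlem}.

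The main obstacle is the bookkeeping for part (2): matching the decomposition $\alpha_{(2k+1\to 2k+1)}=\alpha^{\mathrm{tr}}+\alpha^{\mathrm{co},0}+\alpha^{\mathrm{co},1}$ to the $\kappa^{(u),a}$ refinement by $u$-vertical type and by self-intersection $-an$ of the exceptional curve. The first two components come from $1$-vertical singularities and the last from $2$-vertical ones, and the description of singularity diagrams in Example~\ref{diagexa}(1) guarantees that for $n=2$ the exceptional curve attached to a $(2k+1\to 2k+1)$ singularity is precisely a $(-2)$-curve (so $a=1$). Once this classification is verified, the algebraic check from (1) applied term by term yields (2).
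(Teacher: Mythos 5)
Your proof follows the same route as the paper's: both hinge on the identity $j''(F_p)=\sum_{k\ge 1}\alpha^{n\mathbb{Z}+1}_k(F_p)$, on sorting the $n\mathbb{Z}+1$ singularities by vertical type, on the uniform bound $k\le r/n-1$ away from the top index $k=r/(nm)$ (whose contribution is exactly what the extra summand $\alpha^{n\mathbb{Z}+1}_{r/nm}(F_p)$ absorbs), and, for $(2)$, on splitting off the type $(2k+1\to 2k+1)$ singularities so that their exact contribution $\sum_k k\,\alpha_{(2k+1\to 2k+1)}(F_p)$ replaces the coarse $(r/2-1)$ bound on the $j''_{0,1}(F_p)$ part. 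In part $(1)$ there is one inaccuracy: the identity $\alpha^{n\mathbb{Z}+1}_k=\kappa^{(1)}_k+\kappa^{(2)}_k+\kappa^{(3)}_k$ omits the $0$-vertical singularities. An $n\mathbb{Z}+1$ singularity need not have any $C^{t,k}$ passing through it (e.g.\ a singularity of $R_h$ on a fiber not contained in $R$); it still produces an exceptional curve counted by $j''(F_p)$, but it is counted by no $\kappa^{(u)}$ with $u\ge 1$. This is harmless: replacing $\kappa^{(1)}_k$ throughout by the number of $n\mathbb{Z}+1$ singularities of vertical type $u\le 1$ leaves your reduction and the boundary analysis at $k=r/n$ intact.

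The error that matters is in part $(2)$: you classify the singularities counted by $\alpha^{\mathrm{co},1}_{(2k+1\to 2k+1)}(F_p)$ as $2$-vertical. If that were true, such a singularity $x$ would also be counted by $\kappa^{(2)}_k(F_p)$; it would then contribute nothing to the left-hand side (it cancels out of $\alpha^{2\mathbb{Z}+1}_k(F_p)-\kappa_k(F_p)$) while contributing $k-\bigl(\tfrac{r}{2}-1\bigr)\le 0$ to the right-hand side (it adds $k$ via $\sum_k k\,\alpha_{(2k+1\to 2k+1)}(F_p)$, nothing via $\sum_{a\ge 2}j''_{0,a}(F_p)$, and $-\bigl(\tfrac{r}{2}-1\bigr)$ via $-\bigl(\tfrac{r}{2}-1\bigr)\kappa(F_p)$), so the termwise estimate you invoke would break down. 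What the argument requires, and what the paper's proof explicitly uses, is the opposite statement: no $(2k+1\to 2k+1)$ singularity is $2$- or $3$-vertical, hence none is involved in $\kappa(F_p)$; in particular the $\alpha^{\mathrm{co},1}$ ones are only $1$-vertical, the single vertical component of $R$ through $x$ being the one that also passes through $y$. With that correction, and noting that a singularity of the maximal multiplicity $r/m+1$ can never be of type $(2k+1\to 2k+1)$ (there is no room on $E_x$ for a point of multiplicity $r/m+2$), your reduction of $(2)$ goes through exactly as in $(1)$.
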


\begin{proof}
From Lemma~\ref{chainlem}, we have

\begin{align*}
&\sum_{k\ge 1}k \left(\alpha^{n\mathbb{Z}+1}_k(F_p)-\kappa_k(F_p)\right) = \sum_{k=1}^{\frac{r}{nm}-1}k \left(\alpha^{n\mathbb{Z}+1}_k(F_p)-\kappa_k(F_p)\right)+\frac{r}{nm}\alpha_{\frac{r}{nm}}^{n\mathbb{Z}+1}(F_p) \\
&= \sum_{k=1}^{\frac{r}{nm}-1}k \left(\alpha^{n\mathbb{Z}+1}_k(F_p)-\kappa^{(2)}_k(F_p)-\kappa^{(3)}_k(F_p)\right)
-\sum_{k=1}^{\frac{r}{nm}-1}k\kappa^{(3)}_k(F_p)+\frac{r}{nm}\alpha_{\frac{r}{nm}}^{n\mathbb{Z}+1}(F_p)
\end{align*}
Since $\alpha^{n\mathbb{Z}+1}_k(F_p)-\kappa^{(2)}_k(F_p)-\kappa^{(3)}_k(F_p)\ge 0$ 
and $\sum_{k=1}^{\frac{r}{nm}-1}k\kappa^{(3)}_k(F_p)=k_0\kappa^{(3)}(F_p)$ for some $1\le k_0\le r/n-1$ from Lemma~\ref{3verlem}, we have

\begin{align*}
&\sum_{k=1}^{\frac{r}{nm}-1}k \left(\alpha^{n\mathbb{Z}+1}_k(F_p)-\kappa^{(2)}_k(F_p)-\kappa^{(3)}_k(F_p)\right)
-\sum_{k=1}^{\frac{r}{nm}-1}k\kappa^{(3)}_k(F_p)+\frac{r}{nm}\alpha_{\frac{r}{nm}}^{n\mathbb{Z}+1}(F_p) \\
&\le \left(\frac{r}{n}-1\right)\left(\sum_{k=1}^{\frac{r}{nm}-1}\alpha^{n\mathbb{Z}+1}_k(F_p)-\kappa^{(2)}(F_p)-\kappa^{(3)}(F_p)\right)-k_0\kappa^{(3)}(F_p)+\frac{r}{n}\alpha_{\frac{r}{nm}}^{n\mathbb{Z}+1}(F_p)\\
&=\left(\frac{r}{n}-1\right)\left(\sum_{k=1}^{\frac{r}{nm}-1}\alpha^{n\mathbb{Z}+1}_k(F_p)-\kappa(F_p)\right)+\left(\frac{r}{n}-1-k_0\right)\kappa^{(3)}(F_p)+\frac{r}{n}\alpha_{\frac{r}{nm}}^{n\mathbb{Z}+1}(F_p).
\end{align*}
Combining the above inequality with $j''(F_p)=\sum_{k=1}^{\frac{r}{nm}}\alpha^{n\mathbb{Z}+1}_k(F_p)$ and $r/n-1-k_0\le r/n-2$, the assertion $(1)$ follows.

Assume $n=2$. 
Note that any $(2k+1\to 2k+1)$ singularity is not involved in $\kappa(F_p)$.
Then we have

\begin{align*}
\sum_{k\ge 1}k \left(\alpha^{2\mathbb{Z}+1}_k(F_p)-\kappa_k(F_p)\right)=&\sum_{k=1}^{\frac{r}{2m}-1}k \left(\alpha^{2\mathbb{Z}+1}_k(F_p)-\alpha_{(2k+1\to 2k+1)}(F_p)-\kappa_k(F_p)\right)\\
&+\sum_{k\ge 1}k\alpha_{(2k+1\to 2k+1)}(F_p)+\frac{r}{2m}\alpha_{\frac{r}{2m}}^{2\mathbb{Z}+1}(F_p).
\end{align*}
Similarly as in (1), the assertion (2) follows.
\end{proof}

\begin{lem} \label{n2h1kappalem}
If $n=2$, then we have
$$
\kappa(F_p)\le \frac{2}{3}\sum_{a\ge 2}(a-1)j_{\bullet,a}(F_p)-\frac{2}{3}\alpha_{\frac{r}{2m}}^{2\mathbb{Z}+1}(F_p).
$$
\end{lem}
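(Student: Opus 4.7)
The plan is to localize the inequality to each cluster $D^t(p)$: I would show, for every $t$, a bound of the form $\kappa^t(F_p)\le\tfrac{2}{3}\sum_{a\ge 2}(a-1)j^t_{\bullet,a}(F_p)$, adjusted by a correction that captures the special behaviour of maximal-multiplicity singularities (which, by Lemma~\ref{chainlem}, are $1$-vertical and hence invisible to $\kappa$ but still inflate the $j$ counts). Summing over $t$ will then produce the desired global inequality. The guiding principle is that every $u$-vertical $2\mathbb{Z}+1$ singularity contributing to $\kappa^t$ forces additional blow-ups that inflate the self-intersection numbers of both the incident components $C^{t,k}$ and the new exceptional component they produce, and these forced blow-ups can be priced in terms of $(a^{t,k}-1)$.

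The bookkeeping runs as follows. For each $C^{t,k}$, the number of blow-ups $N^{t,k}$ on it relates to $a^{t,k}$ via $N^{t,k}=2a^{t,k}+(C^{t,k})^2$ (smooth case), $N^{t,k}=2a^{t,k}-3$ (singular rational case), or $N^{t,k}=2a^{t,k}-1$ (exceptional $(-1)$-curve case), as extracted from the proof of Lemma~\ref{alphaklem}. A $u$-vertical $2\mathbb{Z}+1$ singularity of multiplicity $m$ is resolved by first blowing up the singularity itself, which reduces by $1$ the self-intersection of each incident $C^{t,k_i}$ and introduces a new exceptional component $E\in D^t(p)$, and then by resolving the $m$ subsequent even-multiplicity intersections on $E$. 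Generically these sit at distinct transverse points, giving $a_E=(m+1)/2\ge 2$ and thus $(a_E-1)\ge 1$; moreover each incident component acquires a further blow-up at $\widetilde{C}^{t,k_i}\cap E$, forcing $a^{t,k_i}\ge 2$. Aggregated over the three or more curves involved, the total contribution to $\sum_a(a-1)j^t_{\bullet,a}$ is at least $\tfrac{3}{2}(u-1)$ per $u$-vertical $2\mathbb{Z}+1$ singularity, which yields the coefficient $\tfrac{2}{3}$.

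The main obstacle is the tangential degenerate case, in which all branches at the singularity share a common tangent direction so that the blow-up exceptional $E$ has a single singularity of multiplicity $m+1$ instead of $m$ generic intersections. This is precisely the $(m\to m)$ configuration, and there $E$ contributes only to $j''_{0,1}$ and not to $j_{\bullet,a\ge 2}$, so the naive estimate loses a full unit of budget. I would handle this by applying Lemma~\ref{connlem} iteratively along the sequence of singularity diagrams: such a tangential chain must eventually either break into a generic configuration (which produces a component of $a\ge 2$ that recovers the lost budget) or terminate at a singularity of multiplicity $r/m+1$, which is exactly what the correction term $-\tfrac{2}{3}\alpha_{r/2m}^{2\mathbb{Z}+1}(F_p)$ accounts for. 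The $3$-vertical case, possible only when $\Gamma_p$ is of Kodaira type II, III, or IV by Lemma~\ref{3verlem} and then satisfying $\kappa^{(3)}(F_p)\le 1$, is handled by direct case-by-case inspection using the three diagrams in the statement of Lemma~\ref{3verlem}.
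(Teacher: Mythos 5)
Your overall strategy (pricing each $\kappa$-singularity by the blow-ups it forces on the incident components and on the new exceptional curve) is in the right spirit, but there is a genuine gap in how you treat the term $-\tfrac{2}{3}\alpha_{\frac{r}{2m}}^{2\mathbb{Z}+1}(F_p)$. You describe it as something that "accounts for" the budget lost when a tangential chain terminates at a singularity of multiplicity $r/m+1$. But this term enters the right-hand side with a negative sign, so it makes the inequality strictly \emph{harder}: the proof must exhibit an extra surplus of $\tfrac{2}{3}$ in $\tfrac{2}{3}\sum_{a\ge 2}(a-1)j^{t}_{\bullet,a}(F_p)$ for every singularity of maximal multiplicity, over and above what is needed to dominate $\kappa^{t}(F_p)$. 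By Lemma~\ref{chainlem} these maximal singularities are $1$-vertical, hence invisible to $\kappa$, so your per-$\kappa$-singularity ledger never generates any credit for them; as written your argument would at best prove the inequality without the correction term, and it says nothing at all about the case $\kappa^{t}(F_p)=0$ with $\alpha_{\frac{r}{2m}}^{t,2\mathbb{Z}+1}(F_p)>0$, where the entire surplus must come from the chain of exceptional curves over the maximal singularities. A secondary issue is double-counting: when a single component $C^{t,k}$ is incident to several $\kappa$-singularities you charge $(a^{t,k}-1)\ge 1$ to each of them; the exact blow-up counts ($2a^{t,k}+(C^{t,k})^2$ for smooth components, $2a^{t,k}-1$ for exceptional ones) do grow with the number of incidences, but this must be verified, and the case $(C^{t,k})^2=0$ gives only $(a^{t,k}-1)\ge N-1$ for $N$ incidences, so the claimed per-singularity credit of $1$ from each incident curve is not automatic.

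For comparison, the paper sidesteps both problems with a single global count per cluster $D^{t}(p)$: every blow-up at a $(t,u)$-vertical point contributes $u$ to $-\sum_{k}(L^{t,k})^2=\sum_{a\ge 1}2aj^{t}_{\bullet,a}(F_p)$, which gives one inequality bounding $\sum_{a}2aj^{t}_{\bullet,a}(F_p)$ from below by the minimal blow-up counts of the individual components plus $\alpha_{\frac{r}{2m}}^{t,2\mathbb{Z}+1}(F_p)$ plus $\sum_{u=2,3}u\bigl(\iota^{t,(u)}(F_p)+\kappa^{t,(u)}(F_p)\bigr)$; this is then combined with the connectivity bound $\iota^{t}(F_p)\ge j^{t}(F_p)-1$ and with $j^{t}(F_p)\ge \kappa^{t,(2)}(F_p)+\kappa^{t,(3)}(F_p)+\alpha_{\frac{r}{2m}}^{t,2\mathbb{Z}+1}(F_p)+2$. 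In that scheme the tangential $(m\to m)$ configurations never need to be isolated, and the maximal-multiplicity singularities earn their $\tfrac{2}{3}$ automatically because they each add a new curve to $D^{t}(p)$ and an extra unit to the blow-up count. To repair your argument you would need to supply exactly this kind of credit for the $\alpha_{\frac{r}{2m}}^{2\mathbb{Z}+1}$ singularities and to make the non-double-counting of shared components explicit.
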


\begin{proof}
It is sufficient to show that
\begin{equation} \label{n2h1kappaeq}
\kappa^{t}(F_p)\le \frac{2}{3}\sum_{a\ge 2}(a-1)j^{t}_{0,a}(F_p)-\frac{2}{3}\alpha_{\frac{r}{2m}}^{t,2\mathbb{Z}+1}(F_p)
\end{equation}
for any $t$.
If $\kappa^{t}(F_p)=0$, then it is clear.
Thus, we may assume $\kappa^{t}(F_p)>0$.
Then clearly we have 
\begin{equation} \label{n2jteq}
j^{t}(F_p)\ge \kappa^{t,(2)}(F_p)+\kappa^{t,(3)}(F_p)+\alpha_{\frac{r}{2m}}^{t,2\mathbb{Z}+1}(F_p)+2.
\end{equation}
Since any blow-up at a $(t,u)$-vertical type singularity contributes $-u$ to the number 
$$
\sum_{k\ge 1}(L^{t,k})^2=-\sum_{a\ge 1}2aj^{t}_{\bullet,a}(F_p)
$$
and $\Gamma_p$ contains no $u$-vertical type singularity for $u\ge 2$ if $\Gamma_p$ is of type $({}_m\mathrm{I}_0)$, we get
\begin{align*}
\sum_{a\ge 1}2aj^{t}_{\bullet,a}(F_p)\ge& j_{1,\bullet}^{t}(F_p)+(2+\delta_{{}_m\mathrm{I}_1,\mathrm{I\hspace{-.1em}I}})j'^{t}_{0,\bullet}(F_p)+j''^{t}_{0,\bullet}(F_p) \\
&+\alpha_{\frac{r}{2m}}^{t,2\mathbb{Z}+1}(F_p)+\sum_{u=2,3}u\left(\iota^{t,(u)}(F_p)+\kappa^{t,(u)}(F_p)\right).
\end{align*}
Combining this inequality with $\iota^{t}(F_p)\ge j^{t}(F_p)-1$ and \eqref{n2jteq}, we have
\begin{align*}
\sum_{a\ge 1}2aj^{t}_{\bullet,a}(F_p)\ge& 3j^{t}(F_p)+(1+\delta_{{}_m\mathrm{I}_1,\mathrm{I\hspace{-.1em}I}})j'^{t}_{0,\bullet}(F_p)+\alpha_{\frac{r}{2m}}^{t,2\mathbb{Z}+1}(F_p)\\
&+2\kappa^{t}(F_p)-2-\left(\iota^{t,(3)}(F_p)+\kappa^{t,(3)}(F_p)\right) \\
\ge& 2j^{t}(F_p)+(1+\delta_{{}_m\mathrm{I}_1,\mathrm{I\hspace{-.1em}I}})j'^{t}_{0,\bullet}(F_p)+2\alpha_{\frac{r}{2m}}^{t,2\mathbb{Z}+1}(F_p)\\
&+3\kappa^{t}(F_p)-\iota^{t,(3)}(F_p)-2\kappa^{t,(3)}(F_p).
\end{align*}
On the other hand, it is easily seen that
$$
(1+\delta_{{}_m\mathrm{I}_1,\mathrm{I\hspace{-.1em}I}})j'^{t}_{0,\bullet}(F_p)-\iota^{t,(3)}(F_p)-2\kappa^{t,(3)}(F_p)\ge 0.
$$
Hence we get \eqref{n2h1kappaeq}, as desired.
\end{proof}

\begin{lem} \label{etalem}
$(1)$ If $n=3$, then the following hold.

\smallskip

\noindent
$(1,\mathrm{i})$ If $j'^{t}_{0,1}(F_p)\le 2$ for any $t$, then
$$
\frac{1}{2}j'_{0,1}(F_p)\le \eta'_p-\delta_{\mathrm{cyc}}.
$$

\smallskip

\noindent
$(1,\mathrm{ii})$ If $j'^{t}_{0,1}(F_p)=3$ for some $t$, then
$\Gamma_p$ is a singular fiber of type $(\mathrm{I\hspace{-.1em}V})$, $(\mathrm{I}^{*}_k)$, $(\mathrm{I\hspace{-.1em}I}^{*})$, $(\mathrm{I\hspace{-.1em}I\hspace{-.1em}I}^{*})$ or  $(\mathrm{I\hspace{-.1em}V}^{*})$ and 
$$
\frac{1}{3}j'_{0,1}(F_p)\le \eta'_p,\quad \delta_{\mathrm{cyc}}=0.
$$

\smallskip

\noindent
$(1,\mathrm{iii})$ If $j'^{t}_{0,1}(F_p)=4$ for some $t$, then
$\Gamma_p$ is a singular fiber of type $(\mathrm{I}^{*}_k)$ and 
any component of $\Gamma_p$ is contained in $R$.
Moreover, we have
$\eta'_p=1$, $j'_{0,1}(F_p)=4$ and $\delta_{\mathrm{cyc}}=0$.

\smallskip

\noindent
$(2)$ If $n=2$, then the following hold.

\smallskip

\noindent
$(2,\mathrm{i})$ If $j'^{t}_{0,2,\mathrm{odd}}(F_p)\le 2$ for any $t$, then
$$
j'_{0,1}(F_p)+\frac{1}{2}j'_{0,2,\mathrm{odd}}(F_p)\le \eta'_p-\delta_{\mathrm{cyc}},
$$
where $j'_{0,2,\mathrm{odd}}(F_p)$ denotes the number of irreducible components $C$ of $\Gamma_p$ involved in $j'_{0,2}(F_p)$ which has a singular point of $R$ of odd multiplicity.

\smallskip

\noindent
$(2,\mathrm{ii})$ If $j'^{t}_{0,2,\mathrm{odd}}(F_p)=3$ for some $t$, then
$\Gamma_p$ is a singular fiber of type $(\mathrm{I\hspace{-.1em}V})$, $(\mathrm{I}^{*}_k)$, $(\mathrm{I\hspace{-.1em}I}^{*})$, $(\mathrm{I\hspace{-.1em}I\hspace{-.1em}I}^{*})$ or  $(\mathrm{I\hspace{-.1em}V}^{*})$ and 
$$
j'_{0,1}(F_p)+\frac{1}{3}j'_{0,2,\mathrm{odd}}(F_p)\le \eta'_p,\quad \delta_{\mathrm{cyc}}=0.
$$

\smallskip

\noindent
$(2,\mathrm{iii})$ If $j'^{t}_{0,2,\mathrm{odd}}(F_p)=4$ for some $t$, then
$\Gamma_p$ is a singular fiber of type $(\mathrm{I}^{*}_k)$ and 
any component of $\Gamma_p$ is contained in $R$.
Moreover, we have
$\eta'_p=1$, $j'_{0,1}(F_p)=0$, $j'_{0,2,\mathrm{odd}}(F_p)=4$ and $\delta_{\mathrm{cyc}}=0$.
\end{lem}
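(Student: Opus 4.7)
The plan is a case analysis driven by the Kodaira type of $\Gamma_p$. The key dictionary is as follows. For $n=3$, a component contributing to $j'^t_{0,1}(F_p)$ is the proper transform of a smooth rational component $\Gamma^j\subset\Gamma_p$ that lies in $R$ and is blown up exactly once, at a point of multiplicity in $3\mathbb{Z}$ (by Lemma~\ref{multlem}); since non-singular components of elliptic fibers are $(-2)$-curves, this pins down the local configuration. The analogue for $n=2$ is that a component contributing to $j'^t_{0,2,\mathrm{odd}}(F_p)$ is a smooth rational component blown up twice at points on $R$, at least one of which has odd multiplicity.

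Next I would record the structural fact that two smooth rational components of $\Gamma_p$ lie in the same $D^t(p)$ exactly when they are joined by a chain of singularities of $R$ (allowing infinitely near ones). Thus $\{t:D'^t(p)\neq 0\}$ indexes the connected ``islands'' in the subgraph of the dual graph of $\Gamma_p$ spanned by those components contained in $R$, and $j'^t_{0,1}(F_p)$ counts the $(-3)$-contributors inside a single island. From this I would derive cases (1,ii) and (1,iii): if some island contains three (resp.\ four) components whose blown-up singular points all coincide in $R$, then these components must all pass through a common vertex of the dual graph of degree $\geq 3$ (resp.\ $\geq 4$). By Kodaira's classification such a vertex exists only in the types $(\mathrm{I\hspace{-.1em}V})$, $(\mathrm{I}^*_k)$, $(\mathrm{I\hspace{-.1em}I}^*)$, $(\mathrm{I\hspace{-.1em}I\hspace{-.1em}I}^*)$, $(\mathrm{I\hspace{-.1em}V}^*)$, and a degree-$4$ vertex only at the central node of $(\mathrm{I}^*_k)$; moreover, Lemma~\ref{3verlem} forbids $u$-vertical singularities with $u\geq 4$, which pins down case (1,iii). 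Since all three (or four) meeting components lie in $R$ at the central vertex, they produce a $3$-vertical singularity, and condition (3) in the definition of $\delta_{\mathrm{cyc}}$ then forces $\delta_{\mathrm{cyc}}=0$ as claimed.

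For case (1,i) the naive summation already yields $j'_{0,1}(F_p)\leq 2\eta'_p$, so I must improve by $2\delta_{\mathrm{cyc}}$. When $\delta_{\mathrm{cyc}}=1$, conditions (1)--(4) force every component of $\Gamma_p$ to lie in $R$ and to form a cycle in $\mathbf{G}^t$, so $\eta'_p=1$ and it suffices to show $j'_{0,1}(F_p)=0$. I would check each allowed type $({}_m\mathrm{I}_k)$, $(\mathrm{I\hspace{-.1em}I})$, $(\mathrm{I\hspace{-.1em}I\hspace{-.1em}I})$, $(\mathrm{I\hspace{-.1em}V})$ in turn: the cycle condition pins down the multiplicity of $R$ at each vertex of the cycle, and for $({}_m\mathrm{I}_1)$ and $(\mathrm{I\hspace{-.1em}I})$ condition (4) of $\delta_{\mathrm{cyc}}$ forces these multiplicities into $n\mathbb{Z}+1$, which is incompatible with a $(-2)$-component being blown up once (this would require multiplicity in $3\mathbb{Z}$); the other types are handled similarly by combining Lemma~\ref{multlem} with the fact that the cycle constraint and $\iota^{(3)}=0$ jointly determine the multiplicity residues modulo $3$ at each node. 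The $n=2$ case proceeds along the same lines, with $j'^t_{0,2,\mathrm{odd}}(F_p)$ replacing $j'^t_{0,1}(F_p)$, parity of multiplicities replacing divisibility by $3$, and the extra summand $j'_{0,1}(F_p)$ on the left accounting for un-blown-up $(-2)$-components that by themselves constitute an isolated island.

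The main obstacle is the $\delta_{\mathrm{cyc}}=1$ subcase of (1,i) and (2,i): one must combine the four defining conditions of $\delta_{\mathrm{cyc}}$ with the multiplicity arithmetic of Lemma~\ref{multlem} to rule out \emph{even one} $(-n)$-curve contributing to $j'_{0,1}(F_p)$ (respectively $j'_{0,2,\mathrm{odd}}(F_p)$). The bookkeeping is elementary but genuinely tedious, and particular care is needed for the self-intersecting types $(\mathrm{I\hspace{-.1em}I})$ and $(\mathrm{I\hspace{-.1em}I\hspace{-.1em}I})$, where the cycle in $\mathbf{G}^t$ is produced by a single fiber component via its cusp or tacnode rather than by genuine distinct intersections.
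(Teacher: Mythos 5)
Your starting point is exactly the observation the paper's (very terse) proof rests on: for $n=3$ a contributor to $j'^{t}_{0,1}(F_p)$ is a component of $\Gamma_p$ in $R$ blown up exactly once, hence meeting at most one other component of $\Gamma_p$ contained in $R$ (and the $n=2$ analogue for $j'^{t}_{0,2,\mathrm{odd}}$, with un-blown-up $(-2)$-curves forming isolated islands and accounting for the extra summand $j'_{0,1}$). Your handling of (1,i) and (2,i), including the reduction of the $\delta_{\mathrm{cyc}}=1$ case to showing that every cycle component is blown up at least twice, is in the right spirit; the only slip there is that for $({}_m\mathrm{I}_1)$ and $(\mathrm{II})$ the relevant component is a \emph{singular} rational curve (blown up $3a-3$ times), not a $(-2)$-curve, so it can never contribute to $j'_{0,1}$ for reasons independent of condition (4).

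The treatment of (1,ii) and (1,iii) (and hence (2,ii), (2,iii)) contains a genuine error. You reduce to the situation where the three (resp.\ four) contributors have coinciding blown-up points and all meet a common component, and you then deduce $\delta_{\mathrm{cyc}}=0$ from the resulting $3$-vertical singularity via condition (3). Neither step holds. The hypothesis $j'^{t}_{0,1}(F_p)=3$ or $4$ does \emph{not} force the contributors to share a point: in precisely the configurations the lemma allows --- type $(\mathrm{I}^{*}_k)$ with $k\ge 1$, where the four extremal $(-2)$-curves attach in pairs at the two ends of the central chain, or type $(\mathrm{III}^{*})$, where the three arm-ends attach at three distinct points --- there is no common neighbour and no vertex of valency $4$ at all. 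Moreover, Lemma~\ref{3verlem} says $u$-vertical singularities with $u\ge 4$ do not exist and $3$-vertical ones occur only for fibers of type $(\mathrm{II})$, $(\mathrm{III})$, $(\mathrm{IV})$; so for the starred types your appeal to condition (3) is vacuous, and your picture of four concurrent branches would be outright forbidden by that lemma, contradicting the existence of case (1,iii). The argument that actually works is graph-theoretic: a connected island is a subgraph of the dual graph of $\Gamma_p$, and a connected graph all of whose vertices have valency $\le 2$ is a path or a cycle, carrying at most two once-blown-up end components (with the $3$-vertical case of type $(\mathrm{IV})$ treated separately); hence three such contributors force a valency-$\ge 3$ vertex, which occurs only for $(\mathrm{IV})$, $(\mathrm{I}^{*}_k)$, $(\mathrm{II}^{*})$, $(\mathrm{III}^{*})$, $(\mathrm{IV}^{*})$, and four of them force either a valency-$4$ vertex or two valency-$3$ vertices, which occurs only for $(\mathrm{I}^{*}_k)$. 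Finally, $\delta_{\mathrm{cyc}}=0$ for the starred types follows immediately from condition (1) of its definition, not from condition (3); condition (3) is the right reason only for type $(\mathrm{IV})$.
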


\begin{proof}
If $n=3$, then any curve $C$ in $\Gamma_p$ contributing to $j'_{0,1}(F_p)$ intersects at most one component of $\Gamma_p$ contained in $R$, since $C$ is blown up just once.
Thus, considering the classification of singular fibers of elliptic surfaces, we can show easily the assertion (1).

Suppose that $n=2$. Any curve in $\Gamma_p$ contributing to $j'_{0,1}(F_p)$ is not blown up and any curve in $\Gamma_p$ contributing to $j'_{0,2,\mathrm{odd}}(F_p)$ intersects at most one component of $\Gamma_p$ contained in $R$.
Hence we can show the assertion (2) similarly.
\end{proof}

\begin{lem} \label{j''01lem}
$(1)$ If $n=3$, then we have
$$
j''_{0,1}(F_p)\le 2\eta''_p+\sum_{a\ge 1}2aj'_{1,a}(F_p)+\sum_{a\ge 2}(2a-2)j'_{0,a}(F_p)+\sum_{a\ge 2}(2a-1)j''_{0,a}(F_p).
$$

\smallskip

\noindent
$(2)$ If $n=2$, then we have

\begin{align*}
\sum_{k\ge 1}\alpha^{\mathrm{co},1}_{(2k+1\to 2k+1)}(F_p)\le&\;
j'_{0,2,\mathrm{odd}}(F_p)+\sum_{a\ge 3}(a-1)j'_{0,a}(F_p)+\sum_{a\ge 2}(a-1)j'_{1,a}(F_p) \\
&+\sum_{a\ge 3}(a-2)j''_{0,a}(F_p)+\widehat{\eta}_p.
\end{align*}

\end{lem}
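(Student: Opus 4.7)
The plan is to prove both inequalities locally on each $D^t(p)$ and sum over $t$. In both parts, the objects counted on the left---the exceptional $(-3)$-curves in $j''^t_{0,1}(F_p)$ for $n=3$, and the $(2k+1\to 2k+1)$-singularities of the $\mathrm{co},1$ type for $n=2$---arise from blow-ups at $n\mathbb{Z}+1$ type singularities on earlier curves $C^{t,p}$ within the same chain. Accordingly, I charge each such object to a unique parent $C^{t,p}$ via the entry in $\mathcal{D}^{t,p}$ that produced it (well-defined by Lemma~\ref{connlem}), and bound the contribution of each parent using Lemmas~\ref{tclem} and \ref{mijlem}.

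For Part~(1), if $t$ contributes to $\eta''_p$ (so $D'^t(p)=0$ and $C^{t,1}$ is exceptional), the same diagram classification as in Example~\ref{diagexa}(2) shows that the chain attached to $C^{t,1}$ carries at most two $(-3)$-curves; summed over such $t$ this gives $2\eta''_p$. If $t$ contributes to $\eta'_p$, I fix a parent $C^{t,p}$ (contributing to $j'^t_{b,a}$ or $j''^t_{0,a}$ with $a\ge 2$) and count those entries of $\mathcal{D}^{t,p}$ that produce a $(-3)$-child. The main tool is Lemma~\ref{tclem}, giving $(t^{t,p}+c^{t,p})/n=\sum_{i,j}d^{t,p}_{i,j}$, together with Lemma~\ref{mijlem}, which forces each column of $\mathcal{D}^{t,p}$ to be weakly decreasing and to terminate in an $n\mathbb{Z}$ entry; together these bound the number of $n\mathbb{Z}+1$ entries on $C^{t,p}$. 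Translating $c^{t,p}$ into $a$ via $c^{t,p}=(C^{t,p})^2+an$, where the original self-intersection of $C^{t,p}$ is $0$, $-2$, or $-1$ according as $C^{t,p}$ is a smooth elliptic component, a smooth rational fiber component, or an exceptional $(-1)$-curve, accounts for the coefficients $2a$, $2a-2$, and $2a-1$ respectively.

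For Part~(2), I apply the same strategy with $n=2$. Each singularity counted by $\alpha^{\mathrm{co},1}_{(2k+1\to 2k+1)}(F_p)$ lies on a vertical component contained in $R$, i.e., on some component of $\widetilde{R}_v(p)$, and so sits inside a unique $D^t(p)$. For $t$ contributing to $\widehat{\eta}_p=\eta''_p-\overline{\eta}_p$, the analysis analogous to Example~\ref{diagexa}(1) but with a nontrivial $(-2a)$-subchain $(a\ge 2)$ bounds the contribution by $\widehat{\eta}_p$; for $t$ contributing to $\overline{\eta}_p$, Example~\ref{diagexa}(1) shows the chain is a single $(-2)$-curve produced by a $(2k+1\to 2k+1)$ pair which is of $\mathrm{tr}$ or $\mathrm{co},0$ type, hence already excluded from $\alpha^{\mathrm{co},1}$. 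For $t$ contributing to $\eta'_p$, each parent $C^{t,p}$ has at most $a-1$, $a-2$, or $a-1$ relevant children according to its type, with the $-1$ or $-2$ again encoding the original self-intersection.

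The principal obstacle is combinatorial bookkeeping. An $n\mathbb{Z}+1$ entry on $\mathcal{D}^{t,p}$ may produce a child $C^{t,q}$ whose $L^{t,q}$ itself has self-intersection $-na'$ with $a'\ge 2$, and whose own descendants contribute to $j''_{0,1}$; such descendants must not be double-charged to $C^{t,p}$, since they will be charged to $C^{t,q}$ by the $j''^t_{0,a'}$ terms on the right-hand side. Verifying that this one-level charging is consistent, and that the integer coefficients $2a$, $2a-2$, $2a-1$ (resp.\ $a-1$, $a-2$) are tight enough to absorb all configurations allowed by Example~\ref{diagexa}, is the main technical step.
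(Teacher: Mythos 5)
Your plan coincides with the paper's own proof: the paper likewise decomposes $\widetilde{R}_v(p)$ into the chains $D^t(p)$, charges each $(-3)$-curve (resp.\ each odd-multiplicity singularity, converted to $\alpha^{\mathrm{co},1}$ via \eqref{n2j''01eq} and \eqref{n2etaeq}) to the parent curve carrying the diagram entry that produces it, and bounds the admissible entries per parent by combining the blow-up count $c=(C^{t,k})^2+an$ with the structure of the singularity diagrams coming from Lemmas~\ref{tclem}, \ref{mijlem} and \ref{connlem}. The only point to keep straight in executing it is that the per-parent count is the number of maximal runs of $n\mathbb{Z}+1$ entries (only the first entry of a run can spawn a $j''_{0,1}$-child, by Lemma~\ref{connlem}), not the total number of $n\mathbb{Z}+1$ entries, and that a root with $D'^t(p)=0$ contributes the extra $+1$ beyond $\eta''_p$ only when the whole chain lies in $j''_{0,1}$ --- exactly the cases (a)--(c) and (d)--(e) of the paper's argument.
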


\begin{proof}
Suppose that $n=3$. Let $C_1,\dots, C_{j''^{t}_{0,1}(F_p)}$ be all $(-1)$-curves in $\{C''^{t,k}\}_k$ contributing to $j''^{t}_{0,1}(F_p)$ and $x_i$ the point to which $C_i$ contracts for $i=1,\dots,j''^{t}_{0,1}(F_p)$.
If $C_i\neq C^{t,1}$, then $x_i$ is contained in some $C^{t,k}$. 
If $C^{t,k}$ contributes to $j''^{t}_{0,1}(F_p)$ and $k=1$, then $j''^{t}(F_p)=j''^{t}_{0,1}(F_p)=2$ from Example~\ref{diagexa}~(2).
If $C^{t,k}$ contributes to $j''^{t}_{0,1}(F_p)$ and $k\neq 1$, then the point $x^{t,k}$ to which $C^{t,k}$ contracts is contained in another $C^{t,k'}$ which does not contribute to  $j''^{t}_{0,1}(F_p)$ from the argument of Example~\ref{diagexa}~(2). Moreover, $x_i$ is also contained in $C^{t,k'}$ since the singularity diagram of $C^{t,k}$ is type (iii) in Example~\ref{diagexa}~(2) and Lemma~\ref{tclem}.
%Thus, any $x_i$ with $C_i\neq C''^{t,1}$ is contained in some $C''^{t,k}$ which contributes to $j''_{0,a}(F_p)$ for some $a\ge 2$.
For a curve $C^{t,k}$ which does not contribute to $j''^{t}_{0,1}(F_p)$, we consider how many points among $x_1,\dots, x_{j''^{t}_{0,1}(F_p)}$ it contains.

\smallskip

\noindent
(i) Assume that $C^{t,k}$ contributes to $j''^{t}_{0,a}(F_p)$ for some $a\ge 2$.
Then $C^{t,k}$ is blown up $3a-1$ times.
Let $(x_{i,j},m_{i,j})$, $i=1,\dots,l$, $j=1,\dots,i_{\mathrm{bm}}$ be entries of the singularity diagram of $C^{t,k}$.
We consider a subset of entries of the $i$-th column of its diagram $\{(x_{i,j},m_{i,j})\}_{j=j_0+1,\dots,j_0+N}$ satisfying that 

\smallskip

\noindent
$(*)$ $m_{i,j_0}\in 3\mathbb{Z}$ if $j_0>0$, $m_{i,j}\in 3\mathbb{Z}+1$ for $j_0<j<j_0+N$ and $m_{i,j_0+N}\in 3\mathbb{Z}.$

\smallskip

\noindent
Note that the set of all entries of the singularity diagram is the union of these subsets.
Then we can see that the exceptional curve $C^{t,k'}$ obtained by the blow-up at $x^{i,j}$, $j_0+1<j<j_0+N$ does not contribute to $j''_{0,1}(F_p)$ from Lemma~\ref{connlem}.
Hence it contains at most $2a-1$ points among $x_1,\dots, x_{j''^{t}_{0,1}(F_p)}$.

\smallskip

\noindent
(ii) Assume that $C^{t,k}$ contributes to $j'^{t}_{0,a}(F_p)$.
Then $C^{t,k}$ is blown up $3a-2$ times when it is a $(-2)$-curve or $3a-3$ times when it is a singular rational curve.
Hence it contains at most $2a-2$ points among $x_1,\dots, x_{j''^{t}_{0,1}(F_p)}$ by the same argument as in (i).

\smallskip

\noindent
(iii) Assume that $C^{t,k}$ contributes to $j'^{t}_{1,a}(F_p)$.
Then $C^{t,k}$ is blown up $3a$ times.
Hence it contains at most $2a$ points among $x_1,\dots, x_{j''^{t}_{0,1}(F_p)}$ by the same argument as in (i).

We estimate $j''^{t}_{0,1}(F_p)$ from (i), (ii), (iii) as follows.

\smallskip

\noindent
(a) If $D'^{t}(p)=0$ and $j''^{t}_{0,a}(F_p)=0$ for any $a\ge 2$,
then we have shown that $j''^{t}_{0,1}(F_p)\le 2$ in Example~\ref{diagexa}~(2).

\smallskip

\noindent
(b) If $D'^{t}(p)=0$ and $j''^{t}_{0,a}(F_p)>0$ for some $a\ge 2$,
then $x_i$ is the point to which $C^{t,1}$ contracts or contained in some $C^{t,k}$ which contributes to $j''^{t}_{0,a}(F_p)$ for some $a\ge 2$. Hence we have
$$
j''^{t}_{0,1}(F_p)\le 1+\sum_{a\ge 2}(2a-1)j''^{t}_{0,a}(F_p).
$$

\smallskip

\noindent
(c) If $D'^{t}(p)\neq 0$,
then $x_i$ is contained in some $C^{t,k}$ which does not contribute to $j''^{t}_{0,1}(F_p)$. Hence we have
$$
j''^{t}_{0,1}(F_p)\le \sum_{a\ge 2}(2a-2)j'^{t}_{0,a}(F_p)+\sum_{a\ge 1}2aj'^{t}_{1,a}(F_p)+\sum_{a\ge 2}(2a-1)j''^{t}_{0,a}(F_p).
$$
From (a), (b) and (c), we have
$$
j''_{0,1}(F_p)\le \overline{\eta}_p+\eta''_p+\sum_{a\ge 2}(2a-2)j'_{0,a}(F_p)+\sum_{a\ge 1}2aj'_{1,a}(F_p)+\sum_{a\ge 2}(2a-1)j''_{0,a}(F_p)
$$
by summing up for $t=1,\dots,\eta_p$.
Combining this with $\overline{\eta}_p\le \eta''_p$, the claim (1) follows.

Suppose $n=2$. Let $x^{t,k}$ be the point to which $C''^{t,k}$ is contracted and $m^{t,k}$ the multiplicity of $R$ at $x^{t,k}$.
If $D'^{t}(p)=0$, then $x^{t,k}$, $k\ge 2$ is contained in $C^{t,k'}$ for some $k'<k$.
Otherwise, $x^{t,1}$ is also contained in $C^{t,k'}$ for some $k'$.
If $C^{t,k}$ is smooth, a singularity with odd multiplicity which is not contained in $C^{t,k'}$ for any $k'>k$ corresponds to an entry $(x_{i,j},m_{i,j})$ of the singularity diagram $\mathcal{D}^{t,k}$ of $C^{t,k}$ satisfying that $m_{i,j-1}$ is even if $j>1$, and $m_{i,j}$ is odd
and then corresponds to a subset of entries of the diagram satisfying $(*)$.
For a curve $C^{t,k}$, we consider how many such subsets of entries of its singularity diagram there are.

\smallskip 

\noindent
(iv) If $C^{t,k}$ contributes to $j''^{t}_{0,a}(F_p)$, then $C^{t,k}$ is blown up $2a-1$ times.
Then the singularity diagram of $C^{t,k}$ has at most $a-1$ subsets satisfying $(*)$.

\smallskip

\noindent
(v) If $C^{t,k}$ contributes to $j'^{t}_{0,a}(F_p)$ and it is a $(-2)$-curve, then $C^{t,k}$ is blown up $2a-2$ times.
Then the singularity diagram of $C^{t,k}$ has at most $a-1$ subsets satisfying $(*)$.

\smallskip

\noindent
(vi) If $C^{t,k}$ contributes to $j'^{t}_{0,a}(F_p)$ and it is a singular rational curve, then $C^{t,k}$ is blown up $2a-3$ times.
Considering the singularity diagram of the proper transform of $C^{t,k}$ by the blow-up at its singular point, 
$C^{t,k}$ has at most $a-1$ singularities with odd multiplicity which is not contained in $C^{t,k'}$ for any $k'>k$.

\smallskip

\noindent
(vii) If $C^{t,k}$ contributes to $j'^{t}_{1,a}(F_p)$, then $C^{t,k}$ is blown up $2a$ times.
Then the singularity diagram of $C^{t,k}$ has at most $a$ subsets satisfying $(*)$.

We estimate $j''^{t}(F_p)$ using (iv), (v), (vi) and (vii) as follows.

\smallskip

\noindent
(d) If $D'^{t}(p)=0$,
then the number of singularities with odd multiplicity appearing in $\{C^{t,k}\}_k$ is $j''^{t}(F_p)-1$.
Hence we have 
$$
j''^{t}(F_p)-1\le \sum_{a\ge 2}(a-1)j''^{t}_{0,a}(F_p).
$$

\smallskip

\noindent
(e) If $D'^{t}(p)\neq 0$,
then the number of singularities with odd multiplicity appearing in $\{C^{t,k}\}_k$ is $j''^{t}(F_p)$.
Hence we have 
$$
j''^{t}(F_p)\le j'^{t}_{0,1,\mathrm{odd}}(F_p)+\sum_{a\ge 3}(a-1)j'^{t}_{0,a}(F_p)+\sum_{a\ge 1}aj'^{t}_{1,a}(F_p)+\sum_{a\ge 2}(a-1)j''^{t}_{0,a}(F_p).
$$
From (d) and (e), we have
$$
j''(F_p)\le \eta''_p+j'_{0,1,\mathrm{odd}}(F_p)+\sum_{a\ge 2}(a-1)j'_{0,a}(F_p)+\sum_{a\ge 1}aj'_{1,a}(F_p)+\sum_{a\ge 2}(a-1)j''_{0,a}(F_p)
$$
by summing up for $t=1,\dots,\eta_p$.
Combining this with \eqref{n2j''01eq} and \eqref{n2etaeq}, the claim (2) follows.

\end{proof}

\begin{lem} \label{chiphilem}
$(1)$ If $n=3$ and $j'_{0,1}(F_p)\neq 0$, then we have
$$
\chi_{\varphi}(F_p)\ge \frac{1}{12}(j'_{0,1}(F_p)+1).
$$

\smallskip

\noindent
$(2)$ If $n=2$, then the following hold.

\smallskip

\noindent
$(2,\mathrm{i})$ If $\Gamma_p$ is a singular fiber not of type $({}_m\mathrm{I}_k)$, $(\mathrm{I}^{*}_k)$, $(\mathrm{I\hspace{-.1em}I}^{*})$, $(\mathrm{I\hspace{-.1em}I\hspace{-.1em}I}^{*})$, $(\mathrm{I\hspace{-.1em}V}^{*})$, then we have
$$
\chi_{\varphi}(F_p)\ge \frac{1}{12}(2j'_{0,1}(F_p)+j'_{0,2}(F_p)+j'_{0,3}(F_p)+1).
$$
Moreover, all the cases where $\Gamma_p$ is a singular fiber of type $({}_m\mathrm{I}_k)$,  $(\mathrm{I\hspace{-.1em}I}^{*})$, $(\mathrm{I\hspace{-.1em}I\hspace{-.1em}I}^{*})$, $(\mathrm{I\hspace{-.1em}V}^{*})$ and 
$$
\chi_{\varphi}(F_p)< \frac{1}{12}(2j'_{0,1}(F_p)+j'_{0,2}(F_p)+j'_{0,3}(F_p)+1)
$$
are as follows.

\setlength\unitlength{0.2cm}
\begin{figure}[H]
\begin{center}
\begin{tabular}{c}

 \begin{minipage}{0.6\hsize}
 \begin{center}
\begin{picture}(0,6)
 \multiput(-27,3.05)(4,0){4}{\line(1,0){3}}
 \multiput(-27,-1.95)(4,0){4}{\line(1,0){3}}
 \multiput(-22,3.05)(0.3,0){20}{\line(1,0){0.15}}
 \multiput(-22,-1.95)(0.3,0){20}{\line(1,0){0.15}}
 \qbezier(-7.8,0.6)(-7.8,0.6)(-11.1,2.8)
 \qbezier(-7.8,0.5)(-7.8,0.5)(-11.1,-1.7)
 \qbezier(-31.1,0.6)(-31.1,0.6)(-27.8,2.8)
 \qbezier(-31.1,0.5)(-31.1,0.5)(-27.8,-1.7)
 \put(-40,0){$({}_m\mathrm{I}_k)$}

 \put(-32,0){$\star$}
 \put(-28,2.5){$\star$}
 \put(-24,2.5){$\star$}
 \put(-16,2.5){$\star$}
 \put(-12,2.5){$\star$}
 \put(-8,0){$\star$}

 \put(-28,-2.5){$\star$}
 \put(-24,-2.5){$\star$}
 \put(-16,-2.5){$\star$}
 \put(-12,-2.5){$\star$}

 \put(-2,0){or}

\multiput(9,3.05)(4,0){4}{\line(1,0){3}}
 \multiput(9,-1.95)(4,0){4}{\line(1,0){3}}
 \multiput(14,3.05)(0.3,0){20}{\line(1,0){0.15}}
 \multiput(14,-1.95)(0.3,0){20}{\line(1,0){0.15}}
 \qbezier(28.2,0.6)(28.2,0.6)(24.9,2.8)
 \qbezier(28.2,0.5)(28.2,0.5)(24.9,-1.7)
 \qbezier(4.9,0.6)(4.9,0.6)(8.2,2.8)
 \qbezier(4.9,0.5)(4.9,0.5)(8.2,-1.7)

 \put(4,0){$\bullet$}
 \put(8,2.5){$\circ$}
 \put(12,2.5){$\bullet$}
 \put(20,2.5){$\circ$}
 \put(24,2.5){$\bullet$}
 \put(28,0){$\circ$}

 \put(8,-2.5){$\circ$}
 \put(12,-2.5){$\bullet$}
 \put(20,-2.5){$\circ$}
 \put(24,-2.5){$\bullet$}

 \end{picture}
 \end{center}
 \end{minipage}

\end{tabular}
\end{center}
\end{figure}

\setlength\unitlength{0.2cm}
\begin{figure}[H]
\begin{center}
\begin{tabular}{c}

 \begin{minipage}{0.6\hsize}
 \begin{center}
\begin{picture}(0,6)
 \multiput(-31,0.55)(4,0){7}{\line(1,0){3}}
 \qbezier(-11.5,0.8)(-11.5,0.8)(-11.5,3.7)
 \put(-40,0){$(\mathrm{I\hspace{-.1em}I}^{*})$}

 \put(-32,0){$\bullet$}
 \put(-28,0){$\circ$}
 \put(-24,0){$\bullet$}
 \put(-20,0){$\circ$}
 \put(-16,0){$\bullet$}
 \put(-12,0){$\circ$}
 \put(-8,0){$\star$}
 \put(-4,0){$\star$}
 \put(-12,3.5){$\bullet$}

\put(0,0){or}

 \multiput(5,0.55)(4,0){7}{\line(1,0){3}}
 \qbezier(24.5,0.8)(24.5,0.8)(24.5,3.7)

 \put(4,0){$\bullet$}
 \put(8,0){$\circ$}
 \put(12,0){$\bullet$}
 \put(16,0){$\circ$}
 \put(20,0){$\bullet$}
 \put(24,0){$\circ$}
 \put(28,0){$\bullet$}
 \put(32,0){$\circ$}
 \put(24,3.5){$\bullet$}

 \end{picture}
 \end{center}
 \end{minipage}

\end{tabular}
\end{center}
\end{figure}

\setlength\unitlength{0.2cm}
\begin{figure}[H]
\begin{center}
\begin{tabular}{c}

 \begin{minipage}{0.6\hsize}
 \begin{center}
\begin{picture}(0,6)
 \multiput(-31,0.55)(4,0){6}{\line(1,0){3}}
 \qbezier(-19.5,0.8)(-19.5,0.8)(-19.5,3.7)
 \put(-40,0){$(\mathrm{I\hspace{-.1em}I\hspace{-.1em}I}^{*})$}

 \put(-32,0){$\bullet$}
 \put(-28,0){$\circ$}
 \put(-24,0){$\bullet$}
 \put(-20,0){$\circ$}
 \put(-16,0){$\star$}
 \put(-12,0){$\star$}
 \put(-8,0){$\star$}
 \put(-20,3.5){$\bullet$}

 \put(-2,0){or}

 \multiput(5,0.55)(4,0){6}{\line(1,0){3}}
 \qbezier(16.5,0.8)(16.5,0.8)(16.5,3.7)

 \put(4,0){$\bullet$}
 \put(8,0){$\circ$}
 \put(12,0){$\bullet$}
 \put(16,0){$\circ$}
 \put(20,0){$\bullet$}
 \put(24,0){$\circ$}
 \put(28,0){$\star$}
 \put(16,3.5){$\bullet$}

\end{picture}
 \end{center}
 \end{minipage}

\end{tabular}
\end{center}
\end{figure}

\setlength\unitlength{0.2cm}
\begin{figure}[H]
\begin{center}
\begin{tabular}{c}

 \begin{minipage}{0.6\hsize}
 \begin{center}
\begin{picture}(0,6)
 \multiput(-31,0.55)(4,0){6}{\line(1,0){3}}
 \qbezier(-19.5,0.8)(-19.5,0.8)(-19.5,3.7)
  \put(-38,0){or}

 \put(-32,0){$\bullet$}
 \put(-28,0){$\circ$}
 \put(-24,0){$\bullet$}
 \put(-20,0){$\circ$}
 \put(-16,0){$\star$}
 \put(-12,0){$\circ$}
 \put(-8,0){$\bullet$}
 \put(-20,3.5){$\bullet$}

 \put(-2,0){or}

 \multiput(5,0.55)(4,0){6}{\line(1,0){3}}
 \qbezier(16.5,0.8)(16.5,0.8)(16.5,3.7)

 \put(4,0){$\bullet$}
 \put(8,0){$\circ$}
 \put(12,0){$\bullet$}
 \put(16,0){$\circ$}
 \put(20,0){$\bullet$}
 \put(24,0){$\circ$}
 \put(28,0){$\bullet$}
 \put(16,3.5){$\star$}

\end{picture}
 \end{center}
 \end{minipage}

\end{tabular}
\end{center}
\end{figure}

\setlength\unitlength{0.2cm}
\begin{figure}[H]
\begin{center}
\begin{tabular}{c}

 \begin{minipage}{0.6\hsize}
 \begin{center}
\begin{picture}(0,6)
 \multiput(-31,0.55)(4,0){6}{\line(1,0){3}}
 \qbezier(-19.5,0.8)(-19.5,0.8)(-19.5,3.7)
  \put(-38,0){or}

 \put(-32,0){$\bullet$}
 \put(-28,0){$\circ$}
 \put(-24,0){$\bullet$}
 \put(-20,0){$\circ$}
 \put(-16,0){$\bullet$}
 \put(-12,0){$\circ$}
 \put(-8,0){$\bullet$}
 \put(-20,3.5){$\bullet$}

\end{picture}
 \end{center}
 \end{minipage}

\end{tabular}
\end{center}
\end{figure}

\setlength\unitlength{0.2cm}
\begin{figure}[H]
\begin{center}
\begin{tabular}{c}

 \begin{minipage}{0.6\hsize}
 \begin{center}
\begin{picture}(0,6)
 \multiput(-31,0.55)(4,0){4}{\line(1,0){3}}
 \qbezier(-23.5,0.8)(-23.5,0.8)(-23.5,3.7)
 \qbezier(-23.5,4.3)(-23.5,4.3)(-23.5,7.2)
 \put(-40,0){$(\mathrm{I\hspace{-.1em}V}^{*})$}

 \put(-32,0){$\bullet$}
 \put(-28,0){$\circ$}
 \put(-24,0){$\bullet$}
 \put(-20,0){$\circ$}
 \put(-16,0){$\bullet$}
 \put(-24,3.5){$\circ$}
 \put(-24,7){$\bullet$}

\end{picture}
 \end{center}
 \end{minipage}

\end{tabular}
\end{center}
\end{figure}

\noindent
where, in the dual graphs of $\Gamma_p$, the symbol $\circ$, $\bullet$, $\star$ respectively denotes a $(-2)$-curve not contained in $R$, contributing to $j'_{0,1}(F_p)$, contributing to $j'_{0,2}(F_p)$ or $j'_{0,3}(F_p)$.
In these cases, we have
$$
\chi_{\varphi}(F_p)=\frac{1}{12}(2j'_{0,1}(F_p)+j'_{0,2}(F_p)+j'_{0,3}(F_p)-1)
$$
when $\Gamma_p$ is of type $(\mathrm{I\hspace{-.1em}I\hspace{-.1em}I}^{*})$ and
$j'_{0,2}(F_p)=j'_{0,3}(F_p)=0$ and
$$
\chi_{\varphi}(F_p)=\frac{1}{12}(2j'_{0,1}(F_p)+j'_{0,2}(F_p)+j'_{0,3}(F_p))
$$
otherwise.

\smallskip

\noindent
$(2,\mathrm{ii})$ If $\Gamma_p$ is a singular fiber of type $(\mathrm{I}^{*}_k)$, then
$$
\chi_{\varphi}(F_p)\ge \frac{1}{12}(2j'_{0,1}(F_p)+j'_{0,2}(F_p)+j'_{0,3}(F_p)-2)
$$
with equality holding if and only if $\Gamma_p$ and $R$ satisfies the condition indicated in the following figure.

\setlength\unitlength{0.2cm}
\begin{figure}[H]
\begin{center}
\begin{tabular}{c}

 \begin{minipage}{0.6\hsize}
 \begin{center}
\begin{picture}(0,6)
 \multiput(-27,0.55)(4,0){4}{\line(1,0){3}}
 \multiput(-22,0.55)(0.3,0){20}{\line(1,0){0.15}}
 \qbezier(-7.8,2.8)(-7.8,2.8)(-11.1,0.6)
 \qbezier(-7.8,-1.7)(-7.8,-1.7)(-11.1,0.5)
 \qbezier(-31.1,2.8)(-31.1,2.8)(-27.8,0.6)
 \qbezier(-31.1,-1.7)(-31.1,-1.7)(-27.8,0.5)
 \put(-40,0){$(\mathrm{I}^{*}_k)$}

 \put(-32,2.5){$\bullet$}
 \put(-32,-2.5){$\bullet$}
 \put(-28,0){$\circ$}
 \put(-24,0){$\bullet$}
 \put(-16,0){$\bullet$}
 \put(-12,0){$\circ$}
 \put(-8,2.5){$\bullet$}
 \put(-8,-2.5){$\bullet$}

 \end{picture}
 \end{center}
 \end{minipage}

\end{tabular}
\end{center}
\end{figure}

\end{lem}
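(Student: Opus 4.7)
The plan is to carry out a case analysis by Kodaira type for the fiber $\Gamma_p$, using $\chi_{\varphi}(F_p)=e_{\varphi}(\Gamma_p)/12$ together with the well-known Euler numbers $e(I_k)=k$, $e({}_mI_k)=k$, $e(II)=2$, $e(III)=3$, $e(IV)=4$, $e(I_k^*)=k+6$, $e(IV^*)=8$, $e(III^*)=9$, $e(II^*)=10$. Each desired inequality then becomes a purely combinatorial bound on $e_{\varphi}(\Gamma_p)$ in terms of $j'_{0,a}(F_p)$, which I will verify by looking at the dual graph of $\Gamma_p$ and asking which components can lie in $R$.

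For (1), a component contributing to $j'_{0,1}(F_p)$ is a $(-2)$-curve component of $\Gamma_p$ lying in $R$ and blown up exactly once, so $j'_{0,1}(F_p)\neq 0$ forces $\Gamma_p$ to be reducible. For every reducible type other than $I_k$ (and ${}_mI_k$), one has $e_{\varphi}(\Gamma_p)\geq (\text{number of }(-2)\text{-curve components})+1\geq j'_{0,1}(F_p)+1$ directly from Kodaira's table, so the bound follows. For $I_k$ the $k$ components form a cycle and $e_{\varphi}=k$; the potential bad case $j'_{0,1}(F_p)=k$ would place all $k$ proper transforms into a single $D^t(p)$ that forms a cycle in the graph $\mathbf{G}^t$ of Lemma~\ref{etalem}, forcing $\delta_{\mathrm{cyc}}=1$. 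A direct inspection (combined with Lemma~\ref{etalem}~(1,i)--(1,ii)) then rules this configuration out, leaving $j'_{0,1}(F_p)\leq k-1$ and hence $e_{\varphi}(\Gamma_p)\geq j'_{0,1}(F_p)+1$.

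For (2), the analysis is similar but with three indices, corresponding respectively to $(-2)$-curves of $\Gamma_p$ that on $\widetilde{W}$ remain with self-intersection $-2$, become $(-4)$-curves, or become $(-6)$-curves. The crucial observation is that a component $C$ contributing to $j'_{0,1}(F_p)$ must be disjoint from every other component of $R$, so its neighbours in the dual graph of $\Gamma_p$ cannot belong to $R$. For each Kodaira type I would draw the dual graph, mark components as ``in $R$'' or ``not in $R$'' subject to this isolation condition together with the divisibility $R\sim n\mathfrak{d}$ in $\mathrm{Pic}(W)$, and count the resulting contribution $2j'_{0,1}(F_p)+j'_{0,2}(F_p)+j'_{0,3}(F_p)$. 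For the non-excluded types the isolation condition forces each $j'_{0,1}$-component to be paired with a non-$R$ neighbour, giving at least $2$ units of Euler number per $j'_{0,1}$-component and at least $1$ per $j'_{0,2}$- or $j'_{0,3}$-component, with an additional $+1$ slack coming from a surviving non-$R$ vertex in every admissible configuration. The excluded types $({}_mI_k)$, $(I_k^*)$, $(II^*)$, $(III^*)$, $(IV^*)$ admit cyclic or long-chain configurations that saturate or violate this count, and one reads off the listed exceptional pictures by enumerating maximal admissible vertex colourings.

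The main obstacle will be the case analysis on the extended Dynkin diagrams $\widetilde{E}_6$, $\widetilde{E}_7$, $\widetilde{E}_8$ underlying $(IV^*)$, $(III^*)$, $(II^*)$, and especially the infinite family $(I_k^*)$ where the bound weakens to $\chi_{\varphi}(F_p)\geq (2j'_{0,1}+j'_{0,2}+j'_{0,3}-2)/12$. There one must keep careful track of the two pairs of terminal $(-2)$-curves at the ends of the central $A_{k+1}$-chain, because the symmetric configuration in which all four terminal components lie in $R$ (together with alternating in/out colouring of the chain) is what produces the $-2$ defect and the precise figure exhibited in (2,ii). I expect this final equality analysis for $(I_k^*)$ to consume the bulk of the bookkeeping, since it requires both the combinatorial colouring argument and a check that the divisibility of $R$ by $n=2$ is actually realisable for the exceptional configuration.
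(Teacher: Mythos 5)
Your proposal is correct and follows essentially the same route as the paper: the paper likewise observes that for $n=3$ a component contributing to $j'_{0,1}(F_p)$ is blown up only once (hence meets at most one $R$-component), and for $n=2$ it carries no singular point of $R$ at all (hence is isolated from the rest of $R$ in the dual graph), and then settles both parts "by an easy combinatorial argument" on Kodaira's classification, which is exactly the colouring count you outline. The only cosmetic slip is that the graph $\mathbf{G}^t$ and $\delta_{\mathrm{cyc}}$ are introduced in Lemma~\ref{gammaiotalem}, not Lemma~\ref{etalem}; for type $({}_m\mathrm{I}_k)$ the paper's shortcut is simply that if every component contributed to $j'_{0,1}(F_p)$ each would carry two singular points of $R$, contradicting the single blow-up.
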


\ \\

\begin{proof}
$(1)$ Suppose that $n=3$ and $j'_{0,1}(F_p)\neq 0$. 
If $\Gamma_p$ is not of type $({}_m\mathrm{I}_k)$, the claim is clear.
Thus we may assume that $\Gamma_p$ is of type $({}_m\mathrm{I}_k)$.
If $\chi_{\varphi}(F_p)=j'_{0,1}(F_p)/12$, then any component of $\Gamma_p$ contributes to $j'_{0,1}(F_p)$ and contains at least $2$ singular points of $R$, which is a contradiction.

$(2)$ Suppose that $n=2$. 
Any irreducible component $C$ of $\Gamma_p$ contributing to $j'_{0,1}(F_p)$ has no singular points of $R$.
Thus any component of $\Gamma_p$ intersecting with the curve $C$ is not contained in $R$.
From this observation and the classification of singular fibers of elliptic surfaces, 
the claims (2,i) and (2,ii) follow by an easy combinatorial argument.
\end{proof}

Now, we are ready to prove Theorem~\ref{h1upperthm}.

\begin{prfofthmh1}
Let $f\colon S\to B$ be a primitive cyclic covering fibration of type $(g,1,n)$.
From Lemma~\ref{alphaklem}, we have
\begin{align*}
e_f(F_p)&-\mu\chi_f(F_p)= A_n\alpha^{+}_0(F_p)+\sum_{k\ge 1}\left(n-\mu' k\right)\alpha'_k(F_p)+n\sum_{k\ge 1}\alpha''_k(F_p) \\
&-\mu' \sum_{k\ge 1}k\alpha''_k(F_p)-\sum_{a\ge 1}(2A_n+\delta_{a=1})j_{0,a}(F_p)+C_n\chi_{\varphi}(F_p)-\frac{r\mu'}{n}\nu(F_p) \\
=&\; A_n\alpha^{+}_0(F_p)+\sum_{k\ge 1}\left(n-\mu' k\right)\alpha'_k(F_p) +C_n\chi_{\varphi}(F_p) \\
&-\frac{r\mu'}{n}\left(1-\frac{1}{m}\right)-\mu' \gamma_p+n\eta''_p+\sum_{a\ge 1}\left(an^2-a\mu' \right)j'_{1,a}(F_p) \\
&+\sum_{a\ge 1}\left(n(an-2-\delta_{{}_m\mathrm{I}_1,\mathrm{I\hspace{-.1em}I}})-a\mu'-2A_n-\delta_{a=1}\right)j'_{0,a}(F_p) \\
&+\sum_{a\ge 1}\left(n(an-1)-a\mu'-2A_n-\delta_{a=1}\right)j''_{0,a}(F_p) \\
&-\mu' \sum_{k\ge 1}k\alpha^{n\mathbb{Z}+1}_k(F_p)-\sum_{k\ge 1}\left(n-\mu' k\right)\iota_k(F_p)-\sum_{k\ge 1}\left(n-\mu' k\right)\kappa_k(F_p),
\end{align*}
where 
$$
A_n:=n-1-\frac{(n-1)(2n-1)}{12n}\mu, \quad 
C_n:=12n-\left(\frac{r}{12n}(n-1)(n+1)+n\right)\mu
$$
and $\mu':=(n=1)(n+1)\mu/12$.
Combining Lemma~\ref{gammaiotalem} with the above equality, we have

\begin{align}
&e_f(F_p)-\mu\chi_f(F_p) \nonumber \\
\ge &A_n\alpha^{+}_0(F_p)+\sum_{k\ge 1}\left(n-\mu' k\right)\alpha'_k(F_p) +C_n\chi_{\varphi}(F_p)-\frac{r\mu'}{n}\left(1-\frac{1}{m}\right) \nonumber \\
&-\mu' \left(\frac{r}{n}-j'_{0,1}(F_p)\delta_{n=2}-\delta_{{}_m\mathrm{I}_1,\mathrm{I\hspace{-.1em}I}}\right)\delta_{\eta'_p\neq 0}+\left(n-\mu' \right)(\eta'_p-\delta_{\mathrm{cyc}})+\left(2n-\frac{r\mu'}{n}\right)\eta''_p
\nonumber \\
&+\sum_{a\ge 1}\left(n(an-3-\delta_{{}_m\mathrm{I}_1,\mathrm{I\hspace{-.1em}I}})-(a-1)\mu'-2A_n-\delta_{a=1}\right)j'_{0,a}(F_p) \label{presteq} \\
&+\sum_{a\ge 1}\left(n(an-2)-(a-1)\mu'-2A_n-\delta_{a=1}\right)j''_{0,a}(F_p) \nonumber \\
&+\sum_{a\ge 1}\left(n(an-1)-(a-1)\mu' \right)j'_{1,a}(F_p)-\mu' \sum_{k\ge 1}k\alpha^{n\mathbb{Z}+1}_k(F_p)-\sum_{k\ge 1}\left(n-\mu' k\right)\kappa_k(F_p). \nonumber 
\end{align}

Assume that $A_n\ge 0$ and $C_n\ge 0$. We obtain by using Lemmas \ref{alpha0lem} and \ref{clearlem}~(1) that

\begin{align}
&e_f(F_p)-\mu\chi_f(F_p) \nonumber \\
\ge &\sum_{k\ge 1}\left(n-\mu' k\right)\alpha'_k(F_p) +\frac{r(m-1)(n-1)(4-\mu)}{4m}+C_n\chi_{\varphi}(F_p)-\mu' \left(\frac{r}{n}-\delta_{{}_m\mathrm{I}_1,\mathrm{I\hspace{-.1em}I}}\right)\delta_{\eta'_p\neq 0}  \nonumber \\
&+A_n\beta_p+\left(n-\mu'-(n-2)A_n\right)(\eta'_p-\delta_{\mathrm{cyc}})+\left(2n-\frac{r\mu'}{n}-(n-2)A_n\right)\eta''_p
 \nonumber \\
&+\sum_{a\ge 1}\left((n-4)A_n+n(an-3-\delta_{{}_m\mathrm{I}_1,\mathrm{I\hspace{-.1em}I}})
-(a-1)\mu'-(1-\delta_{n=2}\mu')\delta_{a=1}\right)j'_{0,a}(F_p)   \label{steq} \\
&+\sum_{a\ge 1}\left((n-4)A_n+n(an-2)-\mu'\left(a+\frac{r}{n}-2\right)-\delta_{a=1}\right)j''_{0,a}(F_p)  \nonumber \\
&+\sum_{a\ge 1}\left((n-2)A_n+n(an-1)-(a-1)\mu' \right)j'_{1,a}(F_p) \nonumber \\
&+\left(2(n-2)A_n-n+\left(\frac{r}{n}-1\right)\mu' \right)\kappa(F_p)-\mu' \left(\frac{r}{n}-2\right)\kappa^{(3)}(F_p)
-\mu' \alpha_{\frac{r}{nm}}^{n\mathbb{Z}+1}(F_p).  \nonumber 
\end{align}

We put
$$
\mu=\left\{\begin{array}{l}
\displaystyle{\frac{12n^2}{r(n-1)(n+1)}},\;\; \text{if } n\ge 4, \\
\displaystyle{\frac{24}{4r-13}},\;\;\;\;\; \text{if } n=3, \\
\displaystyle{\frac{4}{r-2}},\;\;\;\;\;\;\; \text{if } n=2.
\end{array}
\right.
$$

(i) We assume $n\ge 4$. 
We write $r=kn$. 
The coefficient of $\eta'_p$ is
\begin{align*}
n-\frac{1}{12}(n-1)(n+1)\mu -(n-2)A_n=-(n^2-4n+2)+\frac{n^2-6n+2}{k(n+1)}<0.
\end{align*}
The coefficient of $\eta''_p$ is
\begin{align*}
2n-\frac{(n-1)(n+1)r\mu}{12n}-(n-2)A_n=-(n^2-4n+2)+\frac{(n-2)(2n-1)}{k(n+1)}.
\end{align*}
It is negative if $n\ge 5$ or $k\ge 2$. 
Note that $\eta''_p=0$ if $k<n-1$ since the multiplicity $m'$ of a singular point of type $n\mathbb{Z}+1$ satisfies $(n-1)^2\le m' \le r-n+1$.
Thus, we may not consider the case where $n=4$ and $k=1$.
Using $\eta'_p\le j'(F_p)$ and $\eta''_p\le j''(F_p)$, \eqref{steq} is greater than or equal to

\begin{align}
&\sum_{k\ge 1}\left(n-\mu' k\right)\alpha'_k(F_p) +\frac{r(m-1)(n-1)(4-\mu)}{4m}+C_n\chi_{\varphi}(F_p)+A_n\beta_p \nonumber \\
&-\mu' \left(\frac{r}{n}-\delta_{{}_m\mathrm{I}_1,\mathrm{I\hspace{-.1em}I}}\right)\delta_{\eta'_p\neq 0}+\left((n-2)A_n-n+\mu' \right)\delta_{\mathrm{cyc}}+\sum_{a\ge 1}\left(an^2-a\mu' \right)j'_{1,a}(F_p)
 \nonumber \\
&+\sum_{a\ge 1}\left(n(an-2-\delta_{{}_m\mathrm{I}_1,\mathrm{I\hspace{-.1em}I}})
-a\mu'-2A_n-\delta_{a=1}\right)j'_{0,a}(F_p)   \label{stn4eq} \\
&+\sum_{a\ge 1}\left(an^2-\mu'\left(a+\frac{2r}{n}-2\right)-2A_n-\delta_{a=1}\right)j''_{0,a}(F_p)  \nonumber \\
&+\left(2(n-2)A_n-n+\left(\frac{r}{n}-1\right)\mu' \right)\kappa(F_p)-\mu' \left(\frac{r}{n}-2\right)\kappa^{(3)}(F_p).  \nonumber 
\end{align}

Since $2(n-2)A_n-n+\mu' >0$, we have
\begin{align*}
&\left(2(n-2)A_n-n+\left(\frac{r}{n}-1\right)\mu' \right)\kappa(F_p)-\mu' \left(\frac{r}{n}-2\right)\kappa^{(3)}(F_p) \\
&\ge \left(2(n-2)A_n-n+\mu' \right)\kappa(F_p) \\
&\ge 0.
\end{align*}
The coefficient of $j'_{0,1}(F_p)$ in \eqref{stn4eq} is 
\begin{align}
&n(n-2-\delta_{{}_m\mathrm{I}_1,\mathrm{I\hspace{-.1em}I}})
-\mu'-2A_n-\delta_{a=1} \nonumber \\
&=n^2-(4+\delta_{{}_m\mathrm{I}_1,\mathrm{I\hspace{-.1em}I}})n+1-\frac{(n-1)(n-2)}{k(n+1)}. \label{j'01eq}
\end{align}
If $\delta_{{}_m\mathrm{I}_1,\mathrm{I\hspace{-.1em}I}}=0$,
\eqref{j'01eq} is negative if and only if $n=4$ and $k=1$. 
If $\delta_{{}_m\mathrm{I}_1,\mathrm{I\hspace{-.1em}I}}=1$ and $n\ge 5$, 
\eqref{j'01eq} is non-negative.
If $\delta_{{}_m\mathrm{I}_1,\mathrm{I\hspace{-.1em}I}}=1$ and $n=4$, 
\eqref{j'01eq} is $-3-6/5k<0$.
Note that $j'(F_p)=0$ if $k=1$ since $r=n$ and any singularity of $R$ has the multiplicity $n$.
Thus, we may not consider the case where $\delta_{{}_m\mathrm{I}_1,\mathrm{I\hspace{-.1em}I}}=0$, $n=4$ and $k=1$.
We can check that the coefficient of $j'_{0,a}(F_p)$ in \eqref{stn4eq} is positive for $a\ge 2$.
Moreover, we also can check that the coefficient of $j''_{0,a}(F_p)$ in \eqref{stn4eq} is positive for $a\ge 1$.

\smallskip

\noindent
(i,1) We assume that $\eta'_p=0$. Then, clearly \eqref{stn4eq} is non-negative.

\smallskip

\noindent
(i,2) We assume that $\eta'_p\neq0$, $\chi_{\varphi}(F_p)\ge 1/6$ (i.e. $\Gamma_p$ is a singular fiber not of type $(\mathrm{{}_m\mathrm{I}_1})$) and $\iota^{(3)}(F_p)=0$. 
Then we have
$$
C_n\chi_{\varphi}(F_p)\ge \frac{1}{6}C_n=\frac{n}{6}\left(11-\frac{12n}{k(n-1)(n+1)}\right).
$$
The coefficient of $\delta_{\eta'_p\neq 0}$ is
$$
\mu' \left(\frac{r}{n}-\delta_{{}_m\mathrm{I}_1,\mathrm{I\hspace{-.1em}I}}\right)=-n+\frac{n}{k}\delta_{{}_m\mathrm{I}_1,\mathrm{I\hspace{-.1em}I}}.
$$
If $n\neq 4$ or $\delta_{{}_m\mathrm{I}_1,\mathrm{I\hspace{-.1em}I}}=0$ or $j'_{0,1}(F_p)=0$,
then \eqref{stn4eq} is positive since 
$$
\frac{n}{6}\left(11-\frac{12n}{k(n-1)(n+1)}\right)-n=\frac{n}{6}\left(5-\frac{12n}{k(n-1)(n+1)}\right)>0.
$$
If $n=4$ and $\delta_{{}_m\mathrm{I}_1,\mathrm{I\hspace{-.1em}I}}=1$ and $j'_{0,1}(F_p)\neq0$ (we denote this condition by $(\#)$),
then we have $j'_{0,1}(F_p)=1$ and then \eqref{stn4eq} is positive
since
$$
\frac{n}{6}\left(11-\frac{12n}{k(n-1)(n+1)}\right)-n+\frac{n}{k}-3-\frac{6}{5k}=\frac{1}{3}+\frac{26}{15k}>0.
$$

\smallskip

\noindent
(i,3) We assume that $\eta'_p\neq0$ and $\chi_{\varphi}(F_p)=1/12$ (i.e. $\Gamma_p$ is a singular fiber of type $(\mathrm{{}_m\mathrm{I}_1})$).
Let $m_1$ be the multiplicity of the singular point $x_1$ of $R$ which is singular for $\Gamma_p$.
If $m_1\in n\mathbb{Z}$, then $x_1$ contributes $m_1-2$ to $\alpha^{+}_0(F_p)$ 
(Note that $x_1$ is a $1$-vertical type singularity).
In particular, $x_1$ contributes at least $n-2$ to $\alpha^{+}_0(F_p)$.
If the condition $(\#)$ does not hold, \eqref{stn4eq} is positive 
since 
$$
\frac{n}{12}\left(11-\frac{12n}{k(n-1)(n+1)}\right)-n+(n-2)A_n=n^2-\frac{37}{12}n+2-\frac{n^2+(n-2)(n-1)(2n-1)}{k(n-1)(n+1)}
$$
increases monotonically with respect to $n$ and
$$
\frac{17}{3}-\frac{58}{15k}>0
$$
when $n=4$.
If the condition $(\#)$ holds, \eqref{stn4eq} is also positive 
since
$$
\frac{17}{3}-\frac{58}{15k}+\frac{4}{k}-3-\frac{6}{5k}=\frac{8}{3}-\frac{16}{15k}>0.
$$

\smallskip

\noindent
(i,4) We assume that $\eta'_p\neq0$ and $\chi_{\varphi}(F_p)=0$ (i.e. $\Gamma_p$ is a smooth elliptic curve).
Then \eqref{stn4eq} is positive since $j'_{1,\bullet}(F_p)=1$.

\smallskip

\noindent
(i,5) We assume that $\iota^{(3)}(F_p)=1$. 
From Lemma~\ref{3verlem}, we may consider the following $3$ cases.

\smallskip

\noindent
(i,5,I\hspace{-.1em}I) If $\Gamma_p$ is a singular fiber of type $(\mathrm{I\hspace{-.1em}I})$,
then $\beta_p=n-7$, $\chi_{\varphi}(F_p)=1/6$ and $\kappa(F_p)=\kappa^{(2)}(F_p)\ge 1$.
From the argument in (i,2), it is sufficient to show that
$$
A_n\beta_p+\left(2(n-2)A_n-n+\left(\frac{r}{n}-1\right)\mu' \right)\kappa(F_p)>0.
$$
This inequality is true, since 
\begin{align*}
&(n-7)A_n+2(n-2)A_n-n+\left(\frac{r}{n}-1\right)\mu' \\
&=(3n-11)\left(n-1-\frac{2n-1}{k(n+1)}\right)-\frac{n}{k} \\
&>0.
\end{align*}
Note that $k\ge n-1$ since $j''(F_p)\neq 0$.

\smallskip

\noindent
(i,5,I\hspace{-.1em}I\hspace{-.1em}I) If $\Gamma_p$ is a singular fiber of type $(\mathrm{I\hspace{-.1em}I\hspace{-.1em}I})$,
then $\beta_p=-n-1$, $\chi_{\varphi}(F_p)=1/4$ and $\kappa(F_p)=\kappa^{(2)}(F_p)\ge 1$.
Since $j''(F_p)\ge 1$ and the coefficient of $j''_{0,a}(F_p)$ is greater than $1$ for any $a\ge 1$,
it is sufficient to show that
$$
1+A_n\beta_p+\frac{1}{12}C_n+\left(2(n-2)A_n-n+\left(\frac{r}{n}-1\right)\mu' \right)\kappa(F_p)\ge 0.
$$
The left hand side of it is greater than or equal to
\begin{align}
&1-(n+1)A_n+\frac{1}{12}C_n+2(n-2)A_n-n+\left(\frac{r}{n}-1\right)\mu' \nonumber \\
&=n^2-\frac{61}{12}n+6-\frac{3n^3-12n^2+15n-5}{k(n-1)(n+1)} \label{i5IIIeq}
\end{align}
and \eqref{i5IIIeq} increases monotonically with respect to $n$.
If $n=4$, \eqref{i5IIIeq} is $5/3-11/3k>0$.
Note that $k\ge n-1$ since $j''(F_p)\neq 0$.

\smallskip

\noindent
(i,5,I\hspace{-.1em}V) If $\Gamma_p$ is a singular fiber of type $(\mathrm{I\hspace{-.1em}V})$,
then $\beta_p=-2$, $\chi_{\varphi}(F_p)=1/3$ and $j'_{0,\bullet}(F_p)\ge 3$.
Thus, it is sufficient to show that $A_n\beta_p+C_n/6+3\cdot\eqref{j'01eq}$ is positive.
By a computation, this is equal to
$$
3n^2-\frac{73}{6}n+5-\frac{3n^3-14n^2+21n-8}{k(n-1)(n+1)}
$$
and we can check that it is positive.

\smallskip

From (i,1) through (i,5), we have $e_f(F_p)-\mu\chi_f(F_p)\ge 0$ for $n\ge 4$.
On the other hand, if $n=3$ and $g=4$, one can easily classify all singular fibers of primitive cyclic covering fibrations of type $(4,1,3)$ because $R$ has no singularities of multiplicity greater than $3$, and check $e_f(F_p)\ge (9/2)\chi_f(F_p)$ for any fiber germ $F_p$.
Thus, Theorem~\ref{h1upperthm}~(1) follows.

\medskip

\noindent
(ii) We assume $n=3$ and $g>4$. 
The coefficient of $j''_{0,1}(F_p)$ in \eqref{steq} is
$$
-\left(\frac{2}{9}r-\frac{17}{18}\right)\mu<0.
$$
Applying Lemma~\ref{j''01lem}~(1) to the term of $j''_{0,1}(F_p)$, \eqref{steq} is greater than or equal to

\begin{align}
&\sum_{k\ge 1}\left(3-\frac{2}{3}\mu k\right)\alpha'_k(F_p) +\frac{r(m-1)(4-\mu)}{2m} +\left(36-\left(\frac{2}{9}r+3\right)\mu \right)\chi_{\varphi}(F_p) \nonumber \\
&+\left(2-\frac{5}{18}\mu \right)\beta_p-\frac{2}{3}\mu \left(\frac{r}{3}-\delta_{{}_m\mathrm{I}_1,\mathrm{I\hspace{-.1em}I}}\right)\delta_{\eta'_p\neq 0}+\left(1-\frac{7}{18}\mu \right)(\eta'_p-\delta_{\mathrm{cyc}})
 \nonumber \\
&+\sum_{a\ge 2}\left(9a-11-3\delta_{{}_m\mathrm{I}_1,\mathrm{I\hspace{-.1em}I}}-\left(\frac{4}{9}(a-1)r-\frac{11}{9}a+\frac{17}{18}\right)\mu \right)j'_{0,a}(F_p)   \label{stn3eq} \\
&+\left(-3+\frac{5}{18}\mu \right)j'_{0,1}(F_p)+\sum_{a\ge 2}\left(9a-8-\left(\frac{4}{9}ar-\frac{11}{9}a-\frac{2}{3}\right)\mu \right)j''_{0,a}(F_p)  \nonumber \\
&+\sum_{a\ge 1}\left(9a-1-\left(\frac{4}{9}ar-\frac{11}{9}a-\frac{7}{18}\right)\mu \right)j'_{1,a}(F_p) \nonumber \\
&+\left(1+\left(\frac{2}{9}r-\frac{11}{9}\right)\mu \right)\kappa(F_p)-\frac{2}{3}\mu \left(\frac{r}{3}-2\right)\kappa^{(3)}(F_p). \nonumber
\end{align}
We remark that the term of $\eta''_p$ vanishes by the definition of $\mu$ and
\begin{align*}
&\left(1+\left(\frac{2}{9}r-\frac{11}{9}\right)\mu \right)\kappa(F_p)-\frac{2}{3}\mu \left(\frac{r}{3}-2\right)\kappa^{(3)}(F_p) \\
&\ge \left(1+\frac{1}{9}\mu \right)\kappa(F_p) \\
&\ge 0.
\end{align*}
We can check that the coefficient of $j'_{0,a}(F_p)$ (resp. $j''_{0,a}(F_p)$, $j'_{1,a}(F_p)$) in \eqref{stn3eq} are positive for $a\ge 2$ (resp. $a\ge 2$, $a\ge 1$).

\smallskip

\noindent
(ii,1) Assume that $\eta'_p=0$. 
Then \eqref{stn3eq} is non-negative. 

\smallskip

\noindent
(ii,2) Assume that $\eta'_p\neq 0$ and $j'^{t}_{0,1}(F_p)=4$ for some $t$.
From Lemma~\ref{etalem}~(1,iii), it follows that $\Gamma_p$ is a singular fiber of type $(\mathrm{I}^{*}_k)$ for some $k$, $\chi_{\varphi}(F_p)=(k+6)/12$, $r\ge 9$, $\Gamma_p\subset R$, $\eta'_p=1$, $j'_{0,1}(F_p)=4$ and $\delta_{\mathrm{cyc}}=0$.
Considering the terms of $\chi_{\varphi}(F_p)$, $\delta_{\eta'_p\neq 0}$, $\eta'_p$ and $j'_{0,1}(F_p)$, \eqref{stn3eq} is greater than or equal to
\begin{align*}
&\left(36-\left(\frac{2}{9}r+3\right)\mu \right)\frac{k+6}{12}-\frac{2}{9}r\mu+\left(1-\frac{7}{18}\mu \right)+4\left(-3+\frac{5}{18}\mu \right) \\
&=7-\frac{8(3r+2)}{3(4r-13)}.
\end{align*}
This is positive since $r\ge 9$.

\smallskip

\noindent
(ii,3) Assume that $\eta'_p\neq 0$, $\iota^{(3)}(F_p)=0$, $r\ge 9$ and $j'^{t}_{0,1}(F_p)\ge 3$ for any $t$.
Then 
$$
\frac{1}{3}j'_{0,1}(F_p)\le \eta'_p-\delta_{\mathrm{cyc}}
$$ 
from Lemma~\ref{etalem}~(1,i), (1,ii).
From this and Lemma~\ref{chiphilem}~(1), \eqref{stn3eq} is greater than or equal to
\begin{align*}
\left(36-\left(\frac{2}{9}r+3\right)\mu \right)\frac{j'_{0,1}(F_p)+1}{12}-\frac{2}{9}r\mu+\frac{1}{3}\left(1-\frac{7}{18}\mu \right)j'_{0,1}(F_p)+\left(-3+\frac{5}{18}\mu \right)j'_{0,1}(F_p).
\end{align*}
One can check by a computation that this is positive since $r\ge 9$.

\smallskip

\noindent
(ii,4) Assume that $\eta'_p\neq 0$, $\iota^{(3)}(F_p)=0$, $r=6$ and $\chi_{\varphi}(F_p)\ge (j'_{0,1}(F_p)+2)/12$.
By the same argument as in (ii,3), \eqref{stn3eq} is greater than or equal to
\begin{align*}
&\left(36-\left(\frac{2}{9}r+3\right)\mu \right)\frac{j'_{0,1}(F_p)+2}{12}-\frac{2}{9}r\mu+\frac{1}{3}\left(1-\frac{7}{18}\mu \right)j'_{0,1}(F_p)+\left(-3+\frac{5}{18}\mu \right)j'_{0,1}(F_p) \\
&=-\frac{13}{99}j'_{0,1}(F_p)+\frac{50}{33}.
\end{align*}
On the other hand, one sees that $j'_{0,1}(F_p)\le 6$ since $r=6$. Then
$$
-\frac{13}{99}j'_{0,1}(F_p)+\frac{50}{33}>0.
$$

\smallskip

\noindent
(ii,5) Assume that $\eta'_p\neq 0$, $\iota^{(3)}(F_p)=0$, $r=6$ and $\chi_{\varphi}(F_p)<(j'_{0,1}(F_p)+2)/12$.
If $j'_{0,1}(F_p)=0$, then \eqref{stn3eq} is positive since $j'(F_p)\neq 0$.
Then there are the following two cases only.

\smallskip

\noindent
(ii,5,$\mathrm{I}_2$) $\Gamma_p$ is a singular fiber of type $({}_m\mathrm{I}_2)$ 
and only one component of $\Gamma_p$ is contained in $R$ and brown up just once.

\smallskip

\noindent
(ii,5,$\mathrm{I}_3$) $\Gamma_p$ is a singular fiber of type $({}_m\mathrm{I}_3)$ 
and only two component of $\Gamma_p$ are contained in $R$ and brown up just once at the intersection point of these.

In both cases, we can see that 
$$
\gamma_p \le \left(\frac{r}{n}-1\right)\delta_{\eta'_p\neq 0}+\left(\frac{r}{n}-1\right)\eta''_p 
$$
from the proof of Lemma~\ref{gammaiotalem},
since the component of $\Gamma_p$ not contained in $R$ intersects with $R_h$.
Thus, it is sufficient to show that
\begin{align*}
\left(36-\left(\frac{2}{9}r+3\right)\mu \right)\frac{j'_{0,1}(F_p)+1}{12}-\frac{2}{3}\mu\left(\frac{r}{3}-1\right)+\left(1-\frac{7}{18}\mu \right)+\left(-3+\frac{5}{18}\mu \right)j'_{0,1}(F_p)
\end{align*}
is positive.
This is equal to $(-2j'_{0,1}(F_p)+10)/11>0$ by a computation.

\smallskip

\noindent
(ii,6) Assume that $\iota^{(3)}(F_p)=1$. 
From Lemma~\ref{3verlem}, we may consider the following $3$ cases.

\smallskip

\noindent
(ii,6,I\hspace{-.1em}I) If $\Gamma_p$ is a singular fiber of type $(\mathrm{I\hspace{-.1em}I})$,
then we have $\chi_{\varphi}(F_p)=1/6$, $\beta_p=-4$, $\delta_{\mathrm{cyc}}=0$, $j'_{0,1}(F_p)=0$, $j'_{0,\bullet}(F_p)=1$, $j''_{0,\bullet}(F_p)\ge 2$ and $\kappa(F_p)=\kappa^{(2)}(F_p)\ge 1$.
One can see easily that \eqref{stn3eq} is positive by a computation.

\smallskip

\noindent
(ii,6,I\hspace{-.1em}I\hspace{-.1em}I) If $\Gamma_p$ is a singular fiber of type $(\mathrm{I\hspace{-.1em}I\hspace{-.1em}I})$,
then we have $\chi_{\varphi}(F_p)=1/4$, $\beta_p=-4$, $\delta_{\mathrm{cyc}}=0$, $j'_{0,1}(F_p)=0$, $j'_{0,\bullet}(F_p)=2$, $j''_{0,\bullet}(F_p)\ge 1$ and $\kappa(F_p)=\kappa^{(2)}(F_p)\ge 1$.
One can see easily that \eqref{stn3eq} is positive by a computation.

\smallskip

\noindent
(ii,6,I\hspace{-.1em}V) If $\Gamma_p$ is a singular fiber of type $(\mathrm{I\hspace{-.1em}V})$,
then we have $\chi_{\varphi}(F_p)=1/3$, $\beta_p=-2$, $\delta_{\mathrm{cyc}}=0$, $j'_{0,\bullet}(F_p)=3$ and $\kappa^{(3)}(F_p)=0$.
If $j'_{0,1}(F_p)\le 2$, then we can check that \eqref{stn3eq} is positive.
Suppose that $j'_{0,1}(F_p)=3$.
Then any component of $\Gamma_p$ is brown up only once.
Thus, the multiplicity of the singularity of $R$ on $\Gamma_p$ is $r/3+3\ge 6$.
In particular, $r\ge 9$.
Since this singularity is a $3$-vertical $3\mathbb{Z}$ type singularity, we have
$$
\alpha^+_0(F_p)\ge \iota(F_p)+2\kappa(F_p)+\beta_p+3
$$
from the proof of Lemma~\ref{alpha0lem}.
Then it suffices to show that
\begin{align*}
\left(2-\frac{5}{18}\mu \right)+\left(36-\left(\frac{2}{9}r+3\right)\mu \right)\frac{1}{3}-\frac{2}{9}r\mu+\left(1-\frac{7}{18}\mu \right)+3\left(-3+\frac{5}{18}\mu \right)
\end{align*}
is positive.
This is equal to 
$$
6-\frac{4(16r+45)}{9(4r-13)}>0.
$$

\smallskip

From (ii,1) through (ii,6), we have $e_f(F_p)-\mu\chi_f(F_p)\ge 0$.
Thus, Theorem~\ref{h1upperthm}~(2) follows.

\medskip

\noindent
(iii) We assume $n=2$ and $g\ge 3$. 
From Lemmas~\ref{alpha0lem} and \ref{clearlem}~(2), \eqref{presteq} is greater than or equal to

\begin{align}
&\sum_{k\ge 1}\left(2-\frac{1}{4}\mu k\right)\alpha'_k(F_p) +\frac{r(m-1)(4-\mu)}{4m}+\left(24-\left(\frac{r}{8}+2\right)\mu \right)\chi_{\varphi}(F_p)\nonumber \\
&-\frac{1}{4}\mu \left(\frac{r}{2}-j'_{0,1}(F_p)-\delta_{{}_m\mathrm{I}_1,\mathrm{I\hspace{-.1em}I}}\right)\delta_{\eta'_p\neq 0}+\left(2-\frac{1}{4}\mu \right)(\eta'_p-\delta_{\mathrm{cyc}})+\left(4-\frac{1}{8}r\mu \right)\widehat{\eta}_p \nonumber \\
&+\sum_{a\ge 1}\left(4a-8-2\delta_{{}_m\mathrm{I}_1,\mathrm{I\hspace{-.1em}I}}-\delta_{a=1}-\frac{1}{4}(a-2)\mu \right)j'_{0,a}(F_p) \nonumber \\
&+\sum_{a\ge 2}\left(4a-6-\frac{1}{4}\left(a+\frac{r}{2}-3\right)\mu \right)j''_{0,a}(F_p)+\sum_{a\ge 1}\left(4a-2-\frac{1}{4}\left(a-1\right)\mu \right)j'_{1,a}(F_p)   \label{prestn2eq} \\
&+\sum_{k\ge 1}\left(1-\frac{1}{8}(r-2)\mu-\frac{1}{4}\mu k\right)\alpha^{\mathrm{tr}}_{(2k+1\to 2k+1)}(F_p)-\frac{1}{4}\mu \alpha_{\frac{r}{2m}}^{2\mathbb{Z}+1}(F_p) \nonumber \\
&+\sum_{k\ge 1}\left(1-\frac{1}{8}(r-2)\mu+\left(2-\frac{1}{2}\mu \right) k\right)\alpha^{\mathrm{co},0}_{(2k+1\to 2k+1)}(F_p)-\left(2-\frac{1}{8}(r-2)\mu \right)\kappa(F_p) \nonumber \\
&+\sum_{k\ge 1}\left(-3+\frac{1}{4}\mu+\left(2-\frac{1}{2}\mu \right) k\right)\alpha^{\mathrm{co},1}_{(2k+1\to 2k+1)}(F_p)-\frac{1}{8}(r-4) \mu \kappa^{(3)}(F_p). \nonumber 
\end{align}
The coefficients of $\alpha^{\mathrm{tr}}_{(2k+1\to 2k+1)}(F_p)$ and $\alpha^{\mathrm{co},0}_{(2k+1\to 2k+1)}(F_p)$ in \eqref{prestn2eq} are non-negative and
that of $\alpha^{\mathrm{tr}}_{(r-1\to r-1)}(F_p)$ is $0$ by the definition of $\mu$.
The coefficient of $\alpha^{\mathrm{co},1}_{(2k+1\to 2k+1)}(F_p)$ in \eqref{prestn2eq} is positive except for $k=1$ and
that of $\alpha^{\mathrm{co},1}_{(3\to 3)}(F_p)$ is $-1-\mu/4<0$.
The coefficient of $\kappa(F_p)$ in \eqref{prestn2eq} is $-3/2$.
Applying Lemmas \ref{n2h1kappalem} and \ref{j''01lem}~(2) to $(3/2)\kappa(F_p)$ and $(1+\mu/4)\sum_{k\ge 1}\alpha^{\mathrm{co},1}_{(2k+1\to 2k+1)}(F_p)$, we see that \eqref{prestn2eq} is greater than or equal to

\begin{align}
&\sum_{k\ge 1}\left(2-\frac{1}{4}\mu k\right)\alpha'_k(F_p) +\frac{r(m-1)(4-\mu)}{4m}+\left(24-\left(\frac{r}{8}+2\right)\mu \right)\chi_{\varphi}(F_p) \nonumber \\
&-\frac{1}{4}\mu \left(\frac{r}{2}-\delta_{{}_m\mathrm{I}_1,\mathrm{I\hspace{-.1em}I}}\right)\delta_{\eta'_p\neq 0}+\left(2-\frac{1}{4}\mu \right)(\eta'_p-\delta_{\mathrm{cyc}})+\left(3-\frac{1}{8}(r+2)\mu \right)\widehat{\eta}_p \nonumber \\
&-\left(5-\frac{1}{2}\mu \right)j'_{0,1}(F_p)-\left(1+2\delta_{{}_m\mathrm{I}_1,\mathrm{I\hspace{-.1em}I}}\right)j'_{0,2,\mathrm{even}}(F_p)-\left(2+\frac{1}{4}\mu \right)j'_{0,2,\mathrm{odd}}(F_p) \nonumber \\
&-\left(2\delta_{{}_m\mathrm{I}_1,\mathrm{I\hspace{-.1em}I}}+\frac{3}{4}\mu \right)j'_{0,3}(F_p)
+\sum_{a\ge 4}\left(2a-6-2\delta_{{}_m\mathrm{I}_1,\mathrm{I\hspace{-.1em}I}}-\frac{1}{4}(2a-3)\mu\right)j'_{0,a}(F_p) \nonumber \\
&+\sum_{a\ge 2}\left(2a-3-\frac{1}{4}\left(2a+\frac{r}{2}-5\right)\mu \right)j''_{0,a}(F_p)+\sum_{a\ge 1}\left(2a-\frac{1}{2}\left(a-1\right)\mu \right)j'_{1,a}(F_p)   \label{stn2eq} \\
&+\sum_{k\ge 1}\left(1-\frac{1}{8}(r-2)\mu-\frac{1}{4}\mu k\right)\alpha^{\mathrm{tr}}_{(2k+1\to 2k+1)}(F_p)+\left(1-\frac{1}{4}\mu \right) \alpha_{\frac{r}{2m}}^{2\mathbb{Z}+1}(F_p) \nonumber \\
&+\sum_{k\ge 1}\left(1-\frac{1}{8}(r-2)\mu+\left(2-\frac{1}{2}\mu \right) k\right)\alpha^{\mathrm{co},0}_{(2k+1\to 2k+1)}(F_p) \nonumber \\
&+\sum_{k\ge 1}\left(2-\frac{1}{2}\mu\right)(k-1)\alpha^{\mathrm{co},1}_{(2k+1\to 2k+1)}(F_p)-\frac{1}{8} \left(r-4\right)\mu \kappa^{(3)}(F_p), \nonumber 
\end{align}
where $j'_{0,2,\mathrm{even}}(F_p)=j'_{0,2}(F_p)-j'_{0,2,\mathrm{odd}}(F_p)$.

\smallskip

\noindent
(iii,1) Assume that $\eta'_p=0$. Then \eqref{stn2eq} is clearly non-negative.

\smallskip

\noindent
(iii,2) Assume that $\eta'_p\neq 0$ and $j'^{t}_{0,2,\mathrm{odd}}(F_p)=4$ for some $t$.
From Lemma~\ref{etalem}~(2,iii), $\Gamma_p$ is a singular fiber of type $(\mathrm{I}^{*}_k)$,
 $\Gamma_p\subset R$, $\kappa^{(3)}(F_p)=0$, $\eta'_p=1$, $\delta_{\mathrm{cyc}}=0$, $j'_{0,1}(F_p)=0$ and $j'_{0,2,\mathrm{odd}}(F_p)=4$.
Clearly we have $\chi_{\varphi}(F_p)=(j'_{0,2}(F_p)+j'_{0,3}(F_p)+1)/12$.
Considering the terms of $\chi_{\varphi}(F_p)$, $\delta_{\eta'_p\neq 0}$, $\eta'_p$, $j'_{0,2,\mathrm{even}}(F_p)$, $j'_{0,2,\mathrm{odd}}(F_p)$ and $j'_{0,3}(F_p)$, \eqref{stn2eq} is greater than or equal to
\begin{align*}
&\left(24-\left(\frac{r}{8}+2\right)\mu \right)\frac{1}{12}(j'_{0,2,\mathrm{even}}(F_p)+j'_{0,3}(F_p)+5)-\frac{1}{8}r\mu+\left(2-\frac{1}{4}\mu \right) \\
&-j'_{0,2,\mathrm{even}}(F_p)-4\left(2+\frac{1}{4}\mu \right)-\frac{3}{4}\mu j'_{0,3}(F_p) \\
&=\left(1-\left(\frac{r}{96}+\frac{1}{6}\right)\mu \right)j'_{0,2,\mathrm{even}}(F_p)+\left(2-\left(\frac{r}{96}+\frac{11}{12}\right)\mu \right)j'_{0,3}(F_p)+4-\left(\frac{17}{96}r+\frac{25}{12}\right)\mu,
\end{align*}
which is positive.

\smallskip

\noindent
(iii,3) Assume that $\eta'_p\neq 0$, $\kappa^{(3)}(F_p)=0$, $\delta_{{}_m\mathrm{I}_1,\mathrm{I\hspace{-.1em}I}}=0$, $j'^{t}_{0,2,\mathrm{odd}}(F_p)\le 3$ for any $t$ and 
$$
\chi_{\varphi}(F_p)\ge \frac{1}{12}(2j'_{0,1}(F_p)+j'_{0,2}(F_p)+j'_{0,3}(F_p)+1).
$$
From Lemma~\ref{etalem}~(2,i), (2,ii), we have $j'_{0,1}(F_p)+j'_{0,2,\mathrm{odd}}(F_p)/3\le \eta'_p-\delta_{\mathrm{cyc}}$.
Then \eqref{stn2eq} is greater than or equal to
\begin{align*}
&\left(24-\left(\frac{r}{8}+2\right)\mu \right)\frac{1}{12}(2j'_{0,1}(F_p)+j'_{0,2}(F_p)+j'_{0,3}(F_p)+1)-\frac{1}{8}r\mu \\
&+\left(2-\frac{1}{4}\mu \right)\left(j'_{0,1}(F_p)+\frac{1}{3}j'_{0,2,\mathrm{odd}}(F_p)\right)-\left(5-\frac{1}{2}\mu \right)j'_{0,1}(F_p)-j'_{0,2,\mathrm{even}}(F_p) \\
&-\left(2+\frac{1}{4}\mu \right)j'_{0,2,\mathrm{odd}}(F_p)-\frac{3}{4}\mu j'_{0,3}(F_p) \\
&=\left(1-\left(\frac{r}{48}+\frac{1}{12}\right)\mu \right)j'_{0,1}(F_p)+\left(\frac{2}{3}-\left(\frac{r}{96}+\frac{1}{2}\right)\mu \right)j'_{0,2,\mathrm{odd}}(F_p) \\
&+\left(1-\left(\frac{r}{96}+\frac{1}{6}\right)\mu \right)j'_{0,2,\mathrm{even}}(F_p)+\left(2-\left(\frac{r}{96}+\frac{11}{12}\right)\mu \right)j'_{0,3}(F_p) \\
&+2-\left(\frac{13}{96}r+\frac{1}{6}\right)\mu.
\end{align*}
The coefficients of $j'_{0,1}(F_p)$, $j'_{0,2,\mathrm{even}}(F_p)$, $j'_{0,3}(F_p)$ and the constant term are positive since $r\ge 4$, and $j_{0,2,\mathrm{odd}}(F_p)$ is also positive for $r\ge 6$.
Thus the above equation is positive when $r\ge 6$.
If $r=4$, then one can check by an easy computation that \eqref{stn2eq} is also positive, since $j'_{0,2,\mathrm{odd}}(F_p)\le 2$.

\smallskip

\noindent
(iii,4) Assume that $\eta'_p\neq 0$, $\kappa^{(3)}(F_p)=0$, $\delta_{{}_m\mathrm{I}_1,\mathrm{I\hspace{-.1em}I}}=0$, $j'^{t}_{0,2,\mathrm{odd}}(F_p)\le 3$ for any $t$ and 
$$
\chi_{\varphi}(F_p)< \frac{1}{12}(2j'_{0,1}(F_p)+j'_{0,2}(F_p)+j'_{0,3}(F_p)+1).
$$
From Lemma~\ref{chiphilem}~(2), $\Gamma_p$ is of type $({}_m\mathrm{I}_k)$, $(\mathrm{I}^{*}_k)$, $(\mathrm{I\hspace{-.1em}I}^{*})$, $(\mathrm{I\hspace{-.1em}I\hspace{-.1em}I}^{*})$ or $(\mathrm{I\hspace{-.1em}V}^{*})$.
Considering the numbers $j'_{0,1}(F_p)$,  $\chi_{\phi}(F_p)$ and Lemma~\ref{chiphilem}~(2), 
we can see that \eqref{stn2eq} is positive by the same argument as in (iii,3) except $\Gamma_p$ is of type $(\mathrm{I}^{*}_0)$ and $j'_{0,1}(F_p)=4$.

Suppose that $\Gamma_p$ is of type $(\mathrm{I}^{*}_0)$ and $j'_{0,1}(F_p)=4$.
Then the component not contributing to $j'_{0,1}(F_p)$ is a double component in $\Gamma_p$ and intersects with $R_h$.
Thus we have
$$
\alpha^+_0(F_p)\ge \frac{r}{2}+\sum_{k\ge 1}2k\alpha^{\mathrm{co}}_{(2k+1\to 2k+1)}(F_p)
$$
from the proof of Lemma~\ref{alpha0lem}.
Then one can see that $e_f(F_p)-\mu \chi_{\varphi}(F_p)$ is positive by a computation.

\smallskip

\noindent
(iii,5) Assume that  $\eta'_p\neq 0$, $\kappa^{(3)}(F_p)=0$ and $\delta_{{}_m\mathrm{I}_1,\mathrm{I\hspace{-.1em}I}}=1$. 
Then $j'_{0,1}(F_p)=0$, $j'(F_p)=1$, $\eta'_p=1$ and $\chi_{\varphi}(F_p)=1/12$ or $1/6$.
If $j'_{0,2,\mathrm{even}}(F_p)=1$, then $(\Gamma_p)_{\mathrm{red}}$ is blown up just once. 
Thus, the multiplicity of the singularity of $R$ which is singular for $(\Gamma_p)_{\mathrm{red}}$ is even. Hence $\delta_{\mathrm{cyc}}=0$.
Then we can check by a computation that \eqref{stn2eq} is positive.
If $j'_{0,2,\mathrm{even}}(F_p)=0$, then we can also check by a computation that \eqref{stn2eq} is positive.

\smallskip

\noindent
(iii,6) Assume that $\kappa^{(3)}(F_p)=1$.
From Lemma~\ref{3verlem}, we may consider the following $3$ cases.

\smallskip

\noindent
(iii,6,I\hspace{-.1em}I) If $\Gamma_p$ is a singular fiber of type $(\mathrm{I\hspace{-.1em}I})$,
then $\eta'_p=1$, $j'(F_p)=1$, $j'_{0,a}(F_p)=0$ for $a\le 3$, $j''(F_p)-j''_{0,1}(F_p)\ge 3$ and $\chi_{\varphi}(F_p)=1/6$.
Considering the terms of $\delta_{\eta'_p\neq 0}$, $\eta'_p$, $\kappa^{(3)}(F_p)$, $j'_{0,a}(F_p)$, $j''_{0,a}(F_p)$ and $\chi_{\varphi}(F_p)$ in \eqref{stn2eq}, we can check that \eqref{stn2eq} is positive.

\smallskip

\noindent
(iii,6,I\hspace{-.1em}I\hspace{-.1em}I) If $\Gamma_p$ is a singular fiber of type $(\mathrm{I\hspace{-.1em}I\hspace{-.1em}I})$,
then $\eta'_p=1$, $j'(F_p)=2$, $j'_{0,a}(F_p)=0$ for $a\le 2$, $j''(F_p)-j''_{0,1}(F_p)\ge 2$ and $\chi_{\varphi}(F_p)=1/4$.
Then we can also check that \eqref{stn2eq} is positive.

\smallskip

\noindent
(iii,6,I\hspace{-.1em}V) If $\Gamma_p$ is a singular fiber of type $(\mathrm{I\hspace{-.1em}V})$,
then $\eta'_p=1$, $j'(F_p)=3$, $j'_{0,1}(F_p)=j'_{0,2,\mathrm{even}}(F_p)=0$, $j''(F_p)-j''_{0,1}(F_p)\ge 1$ and $\chi_{\varphi}(F_p)=1/3$.
Similarly, we can check that \eqref{stn2eq} is positive. %(note that $r\ge 9$ by assumption).

\smallskip

From (iii,1) through (iii,6), we have $e_f(F_p)-\mu\chi_f(F_p)\ge 0$.
Thus, Theorem~\ref{h1upperthm}~(3) follows.
\qed
\end{prfofthmh1}

\begin{exa} \label{fiberexa}
There exist singular fiber germs $F_p$ such that $K_f^2(F_p)=(12-\mu)\chi_f(F_p)$ and we can classify them.

\smallskip

\noindent
(i) Assume that $n\ge 4$, or $n=3$ and $g=4$.
Consider the situation that $\Gamma_p$ is smooth and $F_p$ is obtained the following sequence of singularity diagrams associated with $\Gamma_p$ (cf.\ \cite{enoki2}):

\begin{table}[H]
\begin{center}
\begin{tabular}{c}

 \begin{minipage}{0.15\hsize}
 \begin{tabular}{c}
   \\ \cline{1-1}
 \multicolumn{1}{|c|}{$(x_1,r)$}\\ \cline{1-1}
 \multicolumn{1}{c}{\lower1ex\hbox{$\Gamma_p^0$}}
 \end{tabular}
 \end{minipage}

\begin{minipage}{0.15\hsize}
 \begin{tabular}{c}
   \\ \cline{1-1}
 \multicolumn{1}{|c|}{$(x_2,r)$}\\ \cline{1-1}
 \multicolumn{1}{c}{\lower1ex\hbox{$E_1^0$}}
 \end{tabular}
 \end{minipage}

\begin{minipage}{0.1\hsize}
 \begin{tabular}{c}
  $\cdots$
 \end{tabular}
 \end{minipage}

\begin{minipage}{0.15\hsize}
 \begin{tabular}{c}
   \\ \cline{1-1}
 \multicolumn{1}{|c|}{$(x_k,r)$}\\ \cline{1-1}
 \multicolumn{1}{c}{\lower1ex\hbox{$E_{l-1}^0$}}
 \end{tabular}
 \end{minipage}

 \begin{minipage}{0.25\hsize}
 \begin{tabular}{ccc}
  & & \\
 $(1,1,\ldots,1)$\\
 \multicolumn{3}{c}{\lower1ex\hbox{$E_{l}^0$}}
 \end{tabular}
 \end{minipage}

\end{tabular}
\end{center}
\end{table}

\setlength\unitlength{0.35cm}
\begin{figure}[H]
\begin{center}
\begin{tabular}{c}

 \begin{minipage}{0.6\hsize}
 \begin{center}
\begin{picture}(20,5)
 \put(-7.5,-1){$x_1$}
 \multiput(-13,0)(0.4,0){25}{\line(1,0){0.2}}
 \qbezier(-8,0)(-8,3)(-5,3)
 \qbezier(-8,0)(-8,3)(-11,3)
 \qbezier(-8,0)(-8,2.5)(-4,2.5)
 \qbezier(-8,0)(-8,2.5)(-12,2.5)
 \qbezier(-8,0)(-8,-3)(-5,-3)
 \qbezier(-8,0)(-8,-3)(-11,-3)
 \qbezier(-8,0)(-8,-2.5)(-4,-2.5)
 \qbezier(-8,0)(-8,-2.5)(-12,-2.5)
 \put(-8.5,-3.5){$\cdots$}
 \put(-13,-1.5){$\Gamma_p$}

\put(2,0){\vector(-1,0){4}}
\put(-2,1){blow-up}

 \multiput(3,0)(0.4,0){25}{\line(1,0){0.2}}
 \multiput(8,-5)(0,0.4){25}{\line(0,1){0.2}}
 \qbezier(8,2.5)(10,2.5)(10,4)
 \qbezier(8,2.5)(10,2.5)(10,1)
 \qbezier(8,2.5)(6,2.5)(6,4)
 \qbezier(8,2.5)(6,2.5)(6,1)
 \qbezier(8,2.5)(9.5,2.5)(9.5,4)
 \qbezier(8,2.5)(9.5,2.5)(9.5,1)
 \qbezier(8,2.5)(6.5,2.5)(6.5,4)
 \qbezier(8,2.5)(6.5,2.5)(6.5,1)
 \put(7.5,1.5){$x_2$}
 \put(8,5){$E_1$}

 \multiput(23,-3)(0,0.4){20}{\line(0,1){0.2}}
 \multiput(22.5,-1)(0.4,0.2){10}{\line(1,0){0.25}}
 \multiput(27.5,-1)(0.4,0.2){10}{\line(1,0){0.25}}
 \multiput(29.5,0.5)(0.4,-0.2){10}{\line(1,0){0.25}}
 \put(26.2,-0.5){$\cdots$}

 %\multiput(28,-4)(0.4,0){10}{\line(1,0){0.2}}
 %\multiput(29,-5)(0,0.4){10}{\line(0,1){0.2}}
 %\multiput(26,-2)(0.4,0){10}{\line(1,0){0.2}}
 %\multiput(27,-3)(0,0.4){10}{\line(0,1){0.2}}

 \put(21.5,2){\line(1,0){3}}
 \put(21.5,3){\line(1,0){3}}
 \put(21.5,3.5){\line(1,0){3}}
 \put(21.5,4){\line(1,0){3}}
 \put(15.5,1){blow-up}
 \put(15,-1.5){$l-1$ times}
 \put(20,0){\vector(-1,0){2}}
 \multiput(15.5,0)(0.4,0){10}{\line(1,0){0.2}}
 \put(16,0){\vector(-1,0){2}}
 \put(23,5){$E_l$}
 \end{picture}
 \end{center}
 \end{minipage}

\end{tabular}
\end{center}
\end{figure}

\ \\

\setlength\unitlength{0.45cm}
\begin{figure}[H]
\begin{center}
\begin{tabular}{c}

 \begin{minipage}{0.6\hsize}
 \begin{center}
\begin{picture}(8,5)

\put(-7,0){\vector(-1,0){6}}
\put(-13,1){$n$-cyclic cover}

\put(3.5,3){$F_p$}

 \put(-2,-3){\line(0,1){8}}
 \put(-1.5,3.5){$A_l$}

 \qbezier(-2.5,-0.5)(-2.5,-0.5)(0.5,1)
 \qbezier(2.5,-0.5)(2.5,-0.5)(5.5,1)
 \qbezier(4.5,1)(4.5,1)(7.5,-0.5)
 \put(0.7,0){$\cdots$}
 \put(8,-2){$\Biggr\}$}
 \put(8.8,-2){$A_0$}
 %\put(8,-0.8){$A_{0,1}$}
 \put(-4.8,-0.8){$A_{l-1,1}$}

 \qbezier(-2.5,-1)(-2.5,-1)(0.5,0.5)
 \qbezier(2.5,-1)(2.5,-1)(5.5,0.5)
 \qbezier(4.5,0.5)(4.5,0.5)(7.5,-1)
 \put(0.7,-0.5){$\cdots$}

 \qbezier(-2.5,-1.5)(-2.5,-1.5)(0.5,0)
 \qbezier(2.5,-1.5)(2.5,-1.5)(5.5,0)
 \qbezier(4.5,0)(4.5,0)(7.5,-1.5)
 \put(0.7,-1){$\cdots$}

 \qbezier(-2.5,-2.5)(-2.5,-2.5)(0.5,-1)
 \qbezier(2.5,-2.5)(2.5,-2.5)(5.5,-1)
 \qbezier(4.5,-1)(4.5,-1)(7.5,-2.5)
 \put(0.7,-2){$\cdots$}
 %\put(8,-2.8){$A_{0,n}$}
 \put(-4.8,-2.8){$A_{l-1,n}$}

 \put(-2.2,2){$\bullet$}
 \put(-2.2,3){$\bullet$}
 \put(-2.2,3.5){$\bullet$}
 \put(-2.2,4){$\bullet$}

 \end{picture}
 \end{center}
 \end{minipage}

\end{tabular}
\end{center}
\end{figure}

\ \\

\noindent
Then we can write $F_p=A_0+\sum_{i=1}^{n}(A_{1,i}+\cdots+A_{l-1,i})+A_l$, $p_{a}(A_{0})=0$,  $g(A_{k,i})=0$ for $k=1,\ldots,l-1$ and $g(A_{l})=(r/2-1)(n-1)$ (note that $A_0$ may not be irreducible).
This singular fiber satisfies $K_f^2(F_p)=(12-\mu)\chi_f(F_p)$.
Indeed, $\alpha_k(F_p)=0$ for $k=0,1,\ldots,r/n-1$, $\alpha_{r/n}(F_p)=l$, $\varepsilon(F_p)=0$ and $\chi_{\varphi}(F_p)=0$. 
Thus $\chi_f(F_p)=r(n-1)(n+1)l/12n$, $e_f(F_p)=nl$ and then $e_f(F_p)/\chi_f(F_p)=12n^2/r(n-1)(n+1)=\mu$.
We can see from the proof of Theorem~\ref{h1upperthm} that any singular fiber $F_p$ satisfying $K_f^2(F_p)=(12-\mu)\chi_f(F_p)$ is obtained in this way.

\smallskip

\noindent
(ii) Assume that $n=3$ and $g>4$.
Consider the situation that $\Gamma_p$ is smooth and $F_p$ is obtained the following sequence of singularity diagrams associated with $\Gamma_p$:

\begin{table}[H]
\begin{center}
\begin{tabular}{c}

 \begin{minipage}{0.23\hsize}
 \begin{tabular}{cc}
   & \\ \cline{1-1}
 \multicolumn{1}{|c|}{$(x,r-2)$}& $(1,1)$ \\ \cline{1-1}
 \multicolumn{2}{c}{\lower1ex\hbox{$\Gamma_p^0$}}
 \end{tabular}
 \end{minipage}

\begin{minipage}{0.18\hsize}
 \begin{tabular}{c}
   \\ \cline{1-1}
 \multicolumn{1}{|c|}{$(w,3)$}\\ \cline{1-1}
 \multicolumn{1}{|c|}{$(y,r-2)$}\\ \cline{1-1}
 \multicolumn{1}{c}{\lower1ex\hbox{$E_x^1$}}
 \end{tabular}
 \end{minipage}

\begin{minipage}{0.24\hsize}
 \begin{tabular}{cc}
   & \\ \cline{1-2}
 \multicolumn{1}{|c|}{$(z,r-3)$}& \multicolumn{1}{|c|}{$(w,3)$} \\ \cline{1-2}
 \multicolumn{2}{c}{\lower1ex\hbox{$E_y^1$}}
 \end{tabular}
 \end{minipage}

 \begin{minipage}{0.15\hsize}
 \begin{tabular}{ccc}
  & & \\
 $(1,1,\ldots,1)$\\
 \multicolumn{3}{c}{\lower1ex\hbox{$E_z^0$}}
 \end{tabular}
 \end{minipage}

\begin{minipage}{0.13\hsize}
 \begin{tabular}{cc}
  & \\
 $(1,1,1)$\\
 \multicolumn{2}{c}{\lower1ex\hbox{$E_w^0$}}
 \end{tabular}
 \end{minipage}

\end{tabular}
\end{center}
\end{table}

\setlength\unitlength{0.35cm}
\begin{figure}[H]
\begin{center}
\begin{tabular}{c}

 \begin{minipage}{0.6\hsize}
 \begin{center}
\begin{picture}(20,5)
 \put(-9.5,-1){$x$}
 \multiput(-13,0)(0.4,0){25}{\line(1,0){0.2}}
 \qbezier(-10,0)(-10,2.7)(-8.5,2.7)
 \qbezier(-10,0)(-10,2.7)(-11.5,2.7)
 \qbezier(-10,0)(-10,-2.7)(-8.5,-2.7)
 \qbezier(-10,0)(-10,-2.7)(-11.5,-2.7)
\qbezier(-10,0)(-10,2.3)(-8.5,2.3)
 \qbezier(-10,0)(-10,2.3)(-11.5,2.3)
 \put(-6.7,-1.25){\line(0,1){2.5}}
 \put(-5.5,-1.25){\line(0,1){2.5}}
 
 \put(-10.6,-3.3){$\cdots$}
 \put(-13,-1.5){$\Gamma_p$}

\put(2,0){\vector(-1,0){4}}
\put(-2,1){blow-up}

 \multiput(3,0)(0.4,0){25}{\line(1,0){0.2}}
 \put(8,-5){\line(0,1){10}}
 \qbezier(8,3)(10,3)(10,4.5)
 \qbezier(8,3)(10,3)(10,1.5)
 \qbezier(8,3)(6,3)(6,4.5)
 \qbezier(8,3)(6,3)(6,1.5)
 \qbezier(8,3)(8,4.5)(9,4.5)
 \qbezier(8,3)(8,1.5)(9,1.5)
 \put(9.8,-1.25){\line(0,1){2.5}}
 \put(11,-1.25){\line(0,1){2.5}}
 \put(7,2.2){$y$}
 \put(8,5.1){$E_x$}
 \put(5.7,2.5){$\vdots$}

 \multiput(19,0)(0.4,0){25}{\line(1,0){0.2}}
 \put(24,-5){\line(0,1){10}}
 \put(20,3){\line(1,0){8}}
 \put(25.8,-1.25){\line(0,1){2.5}}
 \put(27,-1.25){\line(0,1){2.5}}
 \qbezier(22.9,1.9)(24,3)(25.1,4.1)
 \qbezier(25.5,2)(26.5,3)(27.5,4)
 \qbezier(25.5,4)(26.5,3)(27.5,2)
 \put(14,1){blow-up}
 \put(18,0){\vector(-1,0){4}}
 \put(19.5,3.3){$E_y$}
 \put(25.9,3.5){$\cdots$}
 \put(26.2,2.2){$z$}
 \put(24,2.2){$w$}
 \end{picture}
 \end{center}
 \end{minipage}

\end{tabular}
\end{center}
\end{figure}

\ \\

\setlength\unitlength{0.35cm}
\begin{figure}[H]
\begin{center}
\begin{tabular}{c}

 \begin{minipage}{0.6\hsize}
 \begin{center}
\begin{picture}(20,5)

 \multiput(-5,0)(0.4,0){25}{\line(1,0){0.2}}
 \put(0,-5){\line(0,1){10}}
 \put(-4,3){\line(1,0){8}}
 \put(1.8,-1.25){\line(0,1){2.5}}
 \put(3,-1.25){\line(0,1){2.5}}
 \qbezier(-1.1,1.9)(0,3)(1.1,4.1)

 \multiput(2.5,1.5)(0,0.4){11}{\line(0,1){0.2}}
 \put(1.5,3.5){\line(1,0){2}}
 \put(1.5,4.7){\line(1,0){2}}

 \put(-10,1){blow-up}
 \put(-6,0){\vector(-1,0){4}}
 \put(1.3,5.5){$E_z$}
 \put(0,2.2){$w$}
 \put(3.1,3.6){$\vdots$}

 \multiput(11,0)(0.4,0){25}{\line(1,0){0.2}}
 \put(16,-5){\line(0,1){10}}
 \multiput(12,2.5)(0.4,0){20}{\line(1,0){0.2}}
 \put(17.2,1.5){\line(0,1){2}}
 \put(18.4,1.3){\line(0,1){4}}
 \multiput(17,4)(0.4,0){12}{\line(1,0){0.2}}
 \put(19,3){\line(0,1){2}}
 \put(20.6,3){\line(0,1){2}}
 \put(19.1,4.3){$\cdots$}

 \put(17.8,-1.25){\line(0,1){2.5}}
 \put(19,-1.25){\line(0,1){2.5}}
 \put(6,1){blow-up}
 \put(10,0){\vector(-1,0){4}}
 \put(11.5,3.1){$E_w$}

 \end{picture}
 \end{center}
 \end{minipage}

\end{tabular}
\end{center}
\end{figure}

\ \\

\setlength\unitlength{0.4cm}
\begin{figure}[H]
\begin{center}
\begin{tabular}{c}

 \begin{minipage}{0.6\hsize}
 \begin{center}
\begin{picture}(17,5)

 \put(-5,0){\line(1,0){10}}
 \put(0,-3){\line(0,1){8}}
 \put(-0.15,-3){\line(0,1){8}}
 \put(0.15,-3){\line(0,1){8}}
 \put(-4,2.5){\line(1,0){8}}
 \put(-4,2.35){\line(1,0){8}}
 \put(1.5,-0.3){$\bullet$}
 \put(3,-0.3){$\bullet$}
 \put(1.15,2.2){$\bullet$}
 \put(1.5,4){\line(1,0){4}}
 \put(2.5,1.5){\line(0,1){4}}
 \put(2.35,1.5){\line(0,1){4}}
 \put(2.65,1.5){\line(0,1){4}}
 \put(3,3.7){$\bullet$}
 \put(4.5,3.7){$\bullet$}
 \put(-10,1){triple cover}
 \put(-6,0){\vector(-1,0){4}}
 \put(0,5){$-1$}
 \put(2.5,5){$-1$}

 \put(11,0){\line(1,0){10}}
 \put(16,-3){\line(0,1){8}}
 \put(16.15,-3){\line(0,1){8}}
 \put(11,3){\line(1,0){10}}
 \put(17.5,-0.3){$\bullet$}
 \put(19,-0.3){$\bullet$}
 \put(15.8,1.2){$\bullet$}
 \put(17,2.7){$\bullet$}
 \put(19,2.7){$\bullet$}
 \put(15.8,-0.3){$\circ$}
 \put(15.8,2.7){$\circ$}
 \put(21,0){$A_0$}
 \put(16,5){$A_1$}
 \put(21,3){$A_2$}
 \put(19,5){$F_p$}
 
 \put(6,1){contraction}
 \put(6,0){\vector(1,0){4}}

 \end{picture}
 \end{center}
 \end{minipage}

\end{tabular}
\end{center}
\end{figure}

\ \\

\noindent
Then we can write $F_p=A_0+2A_1+A_3$, $p_{a}(A_{0})=4$,  $g(A_{1})=1$ and $g(A_{2})=r-5$.
This singular fiber satisfies $K_f^2(F_p)=(12-\mu)\chi_f(F_p)$.
Indeed, $\alpha_0(F_p)=1$, $\alpha_1(F_p)=l$, $\alpha_{r/3-1}(F_p)=3$, $\alpha_k(F_p)=0$ for $k\neq 1, r/3-1$, $\varepsilon(F_p)=2$ and $\chi_{\varphi}(F_p)=0$. 
Thus $\chi_f(F_p)=(4r-13)/6$, $e_f(F_p)=4$ and then $e_f(F_p)/\chi_f(F_p)=24/(4r-13)=\mu$.
We can see from the proof of Theorem~\ref{h1upperthm} that any singular fiber $F_p$ satisfying $K_f^2(F_p)=(12-\mu)\chi_f(F_p)$ is obtained in this way.

\smallskip

\noindent
(iii) Assume that $n=2$ and $g\ge 3$.
Consider the situation that $\Gamma_p$ is smooth and $F_p$ is obtained the following sequence of singularity diagrams associated with $\Gamma_p$:

\begin{table}[H]
\begin{center}
\begin{tabular}{c}

 \begin{minipage}{0.2\hsize}
 \begin{tabular}{cc}
   & \\ \cline{1-1}
 \multicolumn{1}{|c|}{$(x_1,r-1)$}& $(1)$ \\ \cline{1-1}
 \multicolumn{2}{c}{\lower1ex\hbox{$\Gamma_p^0$}}
 \end{tabular}
 \end{minipage}

\begin{minipage}{0.12\hsize}
 \begin{tabular}{c}
   \\ \cline{1-1}
 \multicolumn{1}{|c|}{$(x_2,r)$}\\ \cline{1-1}
 \multicolumn{1}{c}{\lower1ex\hbox{$E_1^1$}}
 \end{tabular}
 \end{minipage}

\begin{minipage}{0.1\hsize}
 \begin{tabular}{c}
  $\cdots$
 \end{tabular}
 \end{minipage}

\begin{minipage}{0.22\hsize}
 \begin{tabular}{cc}
   & \\ \cline{1-1}
 \multicolumn{1}{|c|}{$(x_{2l-1},r-1)$}& $(1)$ \\ \cline{1-1}
 \multicolumn{2}{c}{\lower1ex\hbox{$E_{2l-2}^0$}}
 \end{tabular}
 \end{minipage}

\begin{minipage}{0.12\hsize}
 \begin{tabular}{c}
   \\ \cline{1-1}
 \multicolumn{1}{|c|}{$(x_{2l},r)$}\\ \cline{1-1}
 \multicolumn{1}{c}{\lower1ex\hbox{$E_{2l-1}^1$}}
 \end{tabular}
 \end{minipage}

 \begin{minipage}{0.25\hsize}
 \begin{tabular}{ccc}
  & & \\
 $(1,1,\ldots,1)$\\
 \multicolumn{3}{c}{\lower1ex\hbox{$E_{2l}^0$}}
 \end{tabular}
 \end{minipage}

\end{tabular}
\end{center}
\end{table}

\setlength\unitlength{0.35cm}
\begin{figure}[H]
\begin{center}
\begin{tabular}{c}

 \begin{minipage}{0.6\hsize}
 \begin{center}
\begin{picture}(20,5)
 \put(-9.5,-1){$x_1$}
 \multiput(-13,0)(0.4,0){25}{\line(1,0){0.2}}
 \qbezier(-10,0)(-10,2.7)(-8.5,2.7)
 \qbezier(-10,0)(-10,2.7)(-11.5,2.7)
 \qbezier(-10,0)(-10,-2.7)(-8.5,-2.7)
 \qbezier(-10,0)(-10,-2.7)(-11.5,-2.7)
\qbezier(-10,0)(-10,2.3)(-8.5,2.3)
 \qbezier(-10,0)(-10,-2.3)(-8.5,-2.3)
 \put(-6,-1.25){\line(0,1){2.5}}
 
 \put(-10.6,2.7){$\cdots$}
 \put(-13,-1.5){$\Gamma_p$}

\put(2,0){\vector(-1,0){4}}
\put(-2,1){blow-up}

 \multiput(3,0)(0.4,0){25}{\line(1,0){0.2}}
 \put(6,-5){\line(0,1){10}}
 \qbezier(6,3)(7.8,3)(7.8,4.5)
 \qbezier(6,3)(7.8,3)(7.8,1.5)
 \qbezier(6,3)(4.2,3)(4.2,4.5)
 \qbezier(6,3)(4.2,3)(4.2,1.5)
 \qbezier(6,3)(4.1,3)(4.1,4)
 \qbezier(6,3)(7.9,3)(7.9,4)
 \put(10,-1.25){\line(0,1){2.5}}
 \put(5,2.2){$x_2$}
 \put(6,5.1){$E_1$}
 \put(3.7,2.5){$\vdots$}

 \multiput(19,0)(0.4,0){25}{\line(1,0){0.2}}
 \put(24,-5){\line(0,1){10}}
 \multiput(20,3)(0.4,0){20}{\line(1,0){0.2}}
 \put(26.5,-1.25){\line(0,1){2.5}}
 \qbezier(21.5,3)(21.5,4.5)(20.5,4.5)
 \qbezier(21.5,3)(21.5,1.5)(20.5,1.5)
 \qbezier(21.5,3)(21.5,4.5)(22.5,4.5)
 \qbezier(21.5,3)(21.5,1.5)(22.5,1.5)
 \qbezier(21.5,3)(21.5,4.5)(22.5,4.2)
 \qbezier(21.5,3)(21.5,1.5)(22.5,1.8)
 \put(20.8,4.4){$\cdots$}

 \put(14,1){blow-up}
 \put(18,0){\vector(-1,0){4}}
 \put(27.5,3.3){$E_2$}
 \put(21.7,2.4){$x_3$}

 \end{picture}
 \end{center}
 \end{minipage}

\end{tabular}
\end{center}
\end{figure}

\ \\

\setlength\unitlength{0.35cm}
\begin{figure}[H]
\begin{center}
\begin{tabular}{c}

 \begin{minipage}{0.6\hsize}
 \begin{center}
\begin{picture}(20,5)

 \multiput(5,-3)(0,0.4){20}{\line(0,1){0.2}}
 \qbezier(4.5,-1)(4.5,-1)(8.5,1)
 \put(8.2,-0.5){$\cdots$}

 \qbezier(12.5,-1)(12.5,-1)(16.5,1)
 \qbezier(16,-1.25)(16,-1.25)(18,-0.25)
 \multiput(14.5,0.5)(0.4,-0.2){10}{\line(1,0){0.25}}
 \multiput(10.5,0.5)(0.4,-0.2){10}{\line(1,0){0.25}}

 \put(3.5,2){\line(1,0){3}}
 \put(3.5,3.5){\line(1,0){3}}
 \put(3.5,4){\line(1,0){3}}
 \put(5.5,2.2){$\vdots$}
 \put(-2.5,1){blow-up}
 \put(-3,-1.5){$2l-2$ times}
 \put(2,0){\vector(-1,0){2}}
 \multiput(-2.5,0)(0.4,0){10}{\line(1,0){0.2}}
 \put(-2,0){\vector(-1,0){2}}
 \put(4.5,5){$E_{2l}$}

 \end{picture}
 \end{center}
 \end{minipage}

\end{tabular}
\end{center}
\end{figure}

\ \\

\setlength\unitlength{0.4cm}
\begin{figure}[H]
\begin{center}
\begin{tabular}{c}

 \begin{minipage}{0.6\hsize}
 \begin{center}
\begin{picture}(17,5)

 \put(-4,-3){\line(0,1){8}}

 \qbezier(-4.5,-0.5)(-4.5,-0.5)(-1.5,1)
 \qbezier(-4.5,-0.65)(-4.5,-0.65)(-1.5,0.85)
 \put(-3,-0.5){$-1$}
 \put(-1.3,0){$\cdots$}

 \qbezier(1.5,-0.65)(1.5,-0.65)(4.5,0.85)
 \qbezier(1.5,-0.5)(1.5,-0.5)(4.5,1)
 \put(2.7,-0.5){$-1$}
 \qbezier(0,1)(0,1)(3,-0.5)
 \qbezier(3.5,1)(3.5,1)(6.5,-0.5)

 \put(-4.2,2){$\bullet$}
 \put(-4.2,3.5){$\bullet$}
 \put(-4.2,4){$\bullet$}

 \put(5,-0.2){$\bullet$}

 \put(-10,1){double cover}
 \put(-6,0){\vector(-1,0){4}}

\put(18.5,3){$F_p$}

 \put(14,-3){\line(0,1){8}}

 \qbezier(13.5,-0.5)(13.5,-0.5)(16.5,1)
 \qbezier(18.5,-0.5)(18.5,-0.5)(21.5,1)
 \qbezier(20.5,1)(20.5,1)(23.5,-0.5)
 \put(16.7,0){$\cdots$}

 \put(13.8,2){$\bullet$}
 \put(13.8,3.5){$\bullet$}
 \put(13.8,4){$\bullet$}
 \put(13.7,-0.5){$\circ$}
 \put(15.5,0.4){$\circ$}

 \put(19,-0.4){$\circ$}
 \put(20.75,0.5){$\circ$}
 \put(22,-0.2){$\bullet$}

 \put(24,-1){$A_0$}
 \put(19.7,-0.8){$A_1$}
 \put(11.5,-1.1){$A_{l-1}$}
 \put(14,5){$A_{l}$}

 \put(7,1){contraction}
 \put(7,0){\vector(1,0){4}}

 \end{picture}
 \end{center}
 \end{minipage}

\end{tabular}
\end{center}
\end{figure}

\ \\

\noindent
Then we can write $F_p=A_0+A_1+\cdots+A_l$, $p_{a}(A_{0})=2$,  $g(A_{k})=0$ for $k=1,\ldots,l-1$ and $g(A_{l})=r/2-1$.
This singular fiber satisfies $K_f^2(F_p)=(12-\mu)\chi_f(F_p)$.
Indeed, $\alpha_{r/2-1}(F_p)=\alpha_{r/2}(F_p)=l$, $\alpha_k(F_p)=0$ for $k\neq r/2-1,r/2$, $\varepsilon(F_p)=l$ and $\chi_{\varphi}(F_p)=0$. 
Thus $\chi_f(F_p)=(r-2)l/4$, $e_f(F_p)=l$ and then $e_f(F_p)/\chi_f(F_p)=4/(r-2)=\mu$.
We can see from the proof of Theorem~\ref{h1upperthm} that any singular fiber $F_p$ satisfying $K_f^2(F_p)=(12-\mu)\chi_f(F_p)$ is obtained in this way.

\end{exa}
From Theorem~\ref{h1upperthm} and Example~\ref{fiberexa}, we can characterize primitive cyclic covering fibrations of type $(g,1,n)$ whose slope attains the upper bound in Theorem~\ref{h1upperthm}.

\begin{cor}
Let $f\colon S\to B$ be a primitive cyclic covering fibration of type $(g,1,n)$.
Then the slope $\lambda_f$ attains the upper bound in Theorem $\ref{h1upperthm}$
if and only if any singular fiber of $f$ is as in Example $\ref{fiberexa}$.
\end{cor}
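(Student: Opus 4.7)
The plan is to reduce the global slope equality to a pointwise condition on each singular fiber germ, and then to trace through the inequalities used in the proof of Theorem~\ref{h1upperthm} to see exactly when each one becomes an equality. By Noether's formula $12\chi_f=K_f^2+e_f$, the slope $\lambda_f=K_f^2/\chi_f$ attains the upper bound $12-\mu$ of Theorem~\ref{h1upperthm} if and only if $e_f=\mu\chi_f$. Since Lemma~\ref{locallem} gives $e_f=\sum_{p\in B}e_f(F_p)$ and $\chi_f=\sum_{p\in B}\chi_f(F_p)$, this is equivalent to
$$
\sum_{p\in B}\bigl(e_f(F_p)-\mu\chi_f(F_p)\bigr)=0.
$$
The proof of Theorem~\ref{h1upperthm} established that each summand $e_f(F_p)-\mu\chi_f(F_p)$ is non-negative. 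Hence $\lambda_f$ attains the upper bound if and only if $e_f(F_p)-\mu\chi_f(F_p)=0$ for every $p\in B$.

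The \emph{if} direction is then a direct computation: for each of the three model singular fibers described in Example~\ref{fiberexa}, the numerical data $\alpha_k(F_p)$, $\varepsilon(F_p)$, $\chi_\varphi(F_p)$ given there yield exactly $e_f(F_p)=\mu\chi_f(F_p)$, as already noted in the example; for a general fiber $F_p$ one has $e_f(F_p)=\chi_f(F_p)=0$ trivially, so the sum vanishes.

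For the \emph{only if} direction, I would revisit the long case-by-case derivation in the proof of Theorem~\ref{h1upperthm} (cases (i,1)--(i,5), (ii,1)--(ii,6), (iii,1)--(iii,6)) and identify which configurations force $e_f(F_p)-\mu\chi_f(F_p)=0$. In the bulk of the cases the inequality is strict, since positive coefficients multiply non-vanishing invariants; equality can therefore only occur when essentially all such invariants vanish. Concretely, one checks that equality forces $\Gamma_p$ to be smooth (so $\chi_\varphi(F_p)=0$ and the summand $C_n\chi_\varphi(F_p)$ vanishes), forces $\iota^{(3)}(F_p)=\kappa(F_p)=0$ and $\beta_p=0$, and forces the inequality in Lemma~\ref{alpha0lem} to be an equality together with the individual inequalities in Lemmas~\ref{clearlem}, \ref{n2h1kappalem} and \ref{j''01lem}. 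Pushing these equalities back through the definitions constrains the singularity diagrams of the vertical components of $\widetilde R$ over $p$ to be precisely those displayed in Example~\ref{fiberexa}: a single tower of maximal-multiplicity singularities for $n\ge 4$ (or $n=3$, $g=4$); a $(r-2)/(r-2)/3$-type chain together with a $(1,1,\dots,1)$ tail for $n=3$, $g>4$; and the $(r-1,r)$-alternating tower for $n=2$. Matching the resulting fiber germs with those of Example~\ref{fiberexa} then completes the characterization.

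The main obstacle I anticipate is purely bookkeeping: the proof of Theorem~\ref{h1upperthm} involves many parallel sub-cases according to the Kodaira type of $\Gamma_p$ and to which indices $j'_{b,a}$, $j''_{0,a}$, $\iota^{(u)}$, $\kappa^{(u)}$ are non-zero, and for the converse one must verify in each of them that the only configuration making all individual coefficient-times-invariant terms vanish simultaneously is the one in Example~\ref{fiberexa}. No new inequality is needed; the work consists in systematically reading off equality conditions from the chain of estimates already derived, and in using Lemma~\ref{connlem} together with Example~\ref{diagexa} to reconstruct the sequences of singularity diagrams forced by these conditions.
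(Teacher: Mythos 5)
Your proposal is correct and follows essentially the same route as the paper: the bound in Theorem~\ref{h1upperthm} is proved fiberwise via $e_f(F_p)-\mu\chi_f(F_p)\ge 0$, so by Noether's formula global equality is equivalent to fiberwise equality, the ``if'' direction is the computation already carried out in Example~\ref{fiberexa}, and the ``only if'' direction is exactly the paper's (likewise only sketched) reading-off of equality conditions from the case analysis. No substantive difference.
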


\section{Upper bound of the slope: the case of type $(g,0,3)$, $(g,0,2)$}
Let $f\colon S\to B$ be a primitive cyclic covering fibration of type $(g,0,n)$.
We freely use notations in the previous section.
Since $\widetilde{\varphi}\colon \widetilde{W}\to B$ is a ruled surface, a relatively minimal model of it is not unique. By performing elementary transformations, we can choose a standard one:

\begin{lem}[cf.\ \cite{enoki}, \cite{pi1}] \label{eltrlem}
There exists a relatively minimal model $\varphi\colon W\rightarrow B$ of $\widetilde{\varphi}$ such that
if $n=2$ and $g$ is even, then
$$
{\rm mult}_x(R)\le \frac{r}{2}=g+1
$$
for all $x\in R$, and otherwise,
$$
{\rm mult}_x(R_h)\le \frac{r}{2}=\frac{g}{n-1}+1
$$
for all $x\in R_h$, where $R_h$ denotes the horizontal part of $R$, that is, the sum of all $\varphi$-horizontal components of $R$.
\end{lem}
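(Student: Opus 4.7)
The plan is to start with any relatively minimal model $\varphi \colon W \to B$ and to perform a sequence of elementary transformations on $W$ to normalize the multiplicities of $R$. Recall that any two relatively minimal models of the ruled fibration $\widetilde{\varphi}\colon \widetilde{W}\to B$ are connected by such transformations: at a point $x$ on a fiber $\Gamma$, blow up $x$ to obtain $\pi\colon\hat W\to W$ with exceptional divisor $E$, and then contract the proper transform $\tilde\Gamma=\pi^*\Gamma-E$, a $(-1)$-curve, to produce a new model $\varphi'\colon W'\to B$. Under this operation, $R$ is replaced by the image $R'$ on $W'$ of its strict transform $\tilde R=\pi^*R-mE$, where $m=\mathrm{mult}_x(R)$, and the primitive cyclic covering structure is preserved after a suitable adjustment of $\mathfrak{d}$.

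First I would track how multiplicities transform. Since $\tilde R\cdot E=m$ and $\tilde R\cdot\tilde\Gamma=r-m$, the strict transform $R'$ satisfies $R'\cdot\Gamma'=r$ on the new fiber $\Gamma'=\pi'(E)$, with $\mathrm{mult}_y(R')\le r-m$ at the image $y=\pi'(\tilde\Gamma)$ and with the remaining intersection points on $\Gamma'$ carrying multiplicities summing to at most $m$. Hence if $m>r/2$ the transformation replaces the excess point $x$ by a point $y$ of multiplicity at most $r-m<r/2$ together with possibly several points of multiplicity strictly less than $m$ on the new fiber, at least when the tangent cone of $R$ at $x$ involves more than one direction. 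The idea is to iterate this until no point of excess multiplicity remains.

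To ensure termination, I would introduce an invariant such as the lexicographic pair $\bigl(\max_x\mathrm{mult}_x(R),\#\{x\colon \mathrm{mult}_x(R)\text{ attains the maximum}\}\bigr)$, taken over the relevant locus ($R$ or $R_h$), and verify that it strictly decreases under a well-chosen sequence of elementary transformations. The subtle case is when the tangent cone of $R$ at $x$ is concentrated in a single direction so that $\tilde R\cap E$ is one point of multiplicity $m$: here a single transformation does not immediately drop the maximum, but iterated transformations at the successor centers spread out the tangent cone after finitely many steps and force a strict decrease.

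The case distinction reflects which components of $R$ can be controlled. For $n\ge 3$, or $n=2$ with $g$ odd, vertical components (whole fibers contained in $R$) may carry arbitrary local multiplicity from transverse intersections with $R_h$, and since elementary transformations touch only the fiber through the chosen point, only $R_h$ is normalized. When $n=2$ and $g$ is even, the class condition $R\sim 2\mathfrak{d}$ together with the parity of $r/2=g+1$ forbids vertical components from attaining the boundary multiplicity, yielding the stronger bound on all of $R$. The main obstacle will be the termination argument in the tangential case, which requires a careful local analysis of iterated elementary transformations; this technique is classical and appears in the works of Persson \cite{pi1} and Xiao.
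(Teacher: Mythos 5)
The paper itself does not prove Lemma~\ref{eltrlem}; it is imported from \cite{enoki} and \cite{pi1}, so your sketch can only be measured against the standard argument there. Your overall strategy --- eliminate points of multiplicity $>r/2$ by elementary transformations, using that the contracted point $y$ acquires multiplicity $r-m<r/2$ --- is that argument in outline, but the step you yourself flag as the crux, termination, is not supplied, and the mechanism you propose for it is not the right one. When the tangent cone of $R_h$ at $x$ is unibranch, the first infinitely near point of $x$ can again have multiplicity $m$ (take $m$ mutually tangent smooth branches), so after one elementary transformation the new fiber carries a \emph{proper} point of the same multiplicity $m$: your lexicographic invariant, taken over proper points of $W$, does not decrease, and the assertion that iterated transformations ``spread out the tangent cone after finitely many steps'' is precisely what needs proof. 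The standard repair is to take the invariant over all singular points of $R_h$ \emph{including infinitely near ones}: an elementary transformation at a proper point of multiplicity $m$ deletes the single entry $m$ from this multiset and inserts $r-m$, the infinitely near points of $x$ being promoted one level with their multiplicities unchanged, so the number of (possibly infinitely near) points of multiplicity $>r/2$ drops by exactly one. Since the multiplicity of $R_h$ at an infinitely near point never exceeds that at the point below it, a violating point can always be chosen on $W$ itself, and the process terminates.

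For the stronger claim when $n=2$ and $g$ is even you must control $R$ along its vertical components, and there your transform $\pi^*R-mE$ is not the relevant object: by Lemma~\ref{multlem} the branch datum transforms as $\pi^*R-n[m/n]E$, so when $m\notin n\mathbb{Z}$ the exceptional curve enters the branch divisor and survives as the new fiber, a vertical component of the new $R$. This is exactly where the parity of $g$ enters. If $\Gamma\subset R$ and $m=\mathrm{mult}_x(R)$ is odd, then contracting $\widetilde{\Gamma}$ (itself a component of the transformed branch divisor) gives new multiplicity $2g+4-m$ at $y$, which equals $g+2$ again when $m=g+2$ --- so for $g$ odd the transformation makes no progress on the vertical part --- whereas for $m$ even one gets $2g+3-m\le g+1$. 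Your appeal to ``the class condition $R\sim 2\mathfrak{d}$ together with the parity of $r/2$'' points in the right direction but contains neither this computation nor the bookkeeping for the vertical components created or destroyed along the way, so the case split is asserted rather than proved.
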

We take a relatively minimal model $\varphi\colon W\rightarrow B$ satisfying the inequality in Lemma~\ref{eltrlem}.
It is easily seen that the value of $\alpha_k(F_p)$ does not depend on the choice of the relatively minimal model of $\widetilde{\varphi}$ satisfying Lemma~\ref{eltrlem}.
Since $\varphi\colon W\to B$ is a relatively minimal ruled surface, we have 
$K_{\varphi}^2=0$ and $R^2=-rK_{\varphi}R$.
Combining these equalities with \eqref{KReq}, \eqref{kfeq}, \eqref{chifeq} and \eqref{efeq},
we get the following lemma:

\begin{lem}[cf.\ \cite{enoki}]
The following equalities hold.

\begin{align*}
K_f^2=&\;\frac{n-1}{r-1}\left(\frac{(n-1)r-2n}{n}(\alpha_0-2\varepsilon)
+(n+1)\sum_{k\ge 1}k(-nk+r)\alpha_k\right)
-n\sum_{k\ge 1}\alpha_k+\varepsilon.\\
\chi_f=&\;\frac{n-1}{12(r-1)}\left(\frac{(2n-1)r-3n}{n}(\alpha_0-2\varepsilon)+
(n+1)\sum_{k\ge 1}k(-nk+r)\alpha_k\right).\\
e_f=&\;(n-1)\alpha_0+n\sum_{k\ge 1}\alpha_k-(2n-1)\varepsilon. \\
\end{align*}
\end{lem}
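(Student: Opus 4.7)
The plan is to specialize the four general identities \eqref{KReq}, \eqref{kfeq}, \eqref{chifeq} and \eqref{efeq} of Section~1 to the situation at hand. Because $\varphi\colon W\to B$ is a relatively minimal ruled surface, a general fiber $\Gamma\cong\mathbb{P}^1$ gives $\chi_\varphi=e_\varphi=0$, and Noether's formula then forces $K_\varphi^2=0$ as well. The only further input needed is the purely numerical identity $R^2=-rK_\varphi R$, which follows from writing $R\equiv_{\rm num} rC_0+b\Gamma$ (forced by $R\cdot\Gamma=r$) and $K_\varphi\equiv_{\rm num}-2C_0-e\Gamma$ on a relatively minimal ruled surface, so that $R^2=-r^2e+2rb$ and $K_\varphi R=er-2b$.

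The $e_f$ formula is then immediate: substituting $e_\varphi=0$ into \eqref{efeq} yields the third displayed equation. For the remaining two, the first step is to eliminate $K_\varphi R$ by combining $R^2=-rK_\varphi R$ with the adjunction identity \eqref{KReq}. Since $(K_\varphi+R)R=K_\varphi R+R^2=(1-r)K_\varphi R$, one obtains
\[
K_\varphi R=-\frac{1}{r-1}\Bigl(n\sum_{k\ge 1}k(nk-1)\alpha_k+\alpha_0-2\varepsilon\Bigr).
\]

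Substituting this together with $\chi_\varphi=0$ and $R^2=-rK_\varphi R$ into \eqref{chifeq}, the coefficient of $\alpha_0-2\varepsilon$ reduces at once to $\frac{(n-1)((2n-1)r-3n)}{12n(r-1)}$, while the coefficient of $\alpha_k$ equals $\frac{(n-1)k}{12(r-1)}$ times
\[
\bigl((2n-1)r-3n\bigr)(nk-1)-n(r-1)\bigl((2n-1)k-3\bigr),
\]
which expands and collapses to $(n+1)(r-nk)$; this gives the claimed formula for $\chi_f$. Performing the analogous substitution in \eqref{kfeq} with $K_\varphi^2=0$ and collecting the coefficient of $\alpha_k$ leads to an entirely parallel polynomial identity, from which the stated expression for $K_f^2$ drops out. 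No step here poses any real obstacle; the only mildly delicate point is the factor-of-$n$ bookkeeping coming from the degree of $\widetilde{\theta}$, which propagates through the expressions for $K_{\widetilde f}^2$ and $\chi_{\widetilde f}$ leading to \eqref{kfeq} and \eqref{chifeq}.
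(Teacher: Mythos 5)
Your proposal is correct and follows exactly the paper's (unwritten) argument: the paper obtains this lemma in one line by combining $K_\varphi^2=0$ and $R^2=-rK_\varphi R$ with \eqref{KReq}, \eqref{kfeq}, \eqref{chifeq} and \eqref{efeq}, which is precisely what you carry out, including the elimination of $K_\varphi R$ via $(K_\varphi+R)R=(1-r)K_\varphi R$ and the explicit coefficient check for $\chi_f$. One caveat on the step you leave implicit: the ``entirely parallel'' identity for $K_f^2$ only drops out if the $\alpha_k$-sum in \eqref{kfeq} carries an overall factor $n$, i.e.\ reads $-n\sum_{k\ge1}((n-1)k-1)^2\alpha_k$ (as printed the factor is missing --- a typo, as one checks against \eqref{efeq} via Noether's formula or against Lemma~\ref{locallem}); with the literal \eqref{kfeq} the coefficient of $\alpha_k$ would come out as $\frac{n-1}{r-1}((n-1)r-2n)k(nk-1)-((n-1)k-1)^2$, which does not equal the required $\frac{(n-1)(n+1)}{r-1}k(r-nk)-n$.
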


For $p\in B$, we put
\begin{align*}
K_f^2(F_p)=&\;\frac{n-1}{r-1}\left(\frac{(n-1)r-2n}{n}(\alpha_0(F_p)-2\varepsilon(F_p))
+(n+1)\sum_{k\ge 1}k(-nk+r)\alpha_k(F_p)\right) \\
&-n\sum_{k\ge 1}\alpha_k(F_p)+\varepsilon(F_p),\\
\chi_f(F_p)=&\;\frac{n-1}{12(r-1)}\left(\frac{(2n-1)r-3n}{n}(\alpha_0(F_p)-2\varepsilon(F_p))+
(n+1)\sum_{k\ge 1}k(-nk+r)\alpha_k(F_p)\right),\\
e_f(F_p)=&\;(n-1)\alpha_0(F_p)+n\sum_{k\ge 1}\alpha_k(F_p)-(2n-1)\varepsilon(F_p). 
\end{align*}

In \cite{enoki}, we have established the slope equality for a primitive cyclic covering fibration of type $(g,0,n)$ by using the above representations of $K_f^2$, $\chi_f$ and $e_f$,
 and an upper bound of it's slope when $n\ge 4$.
In this section, we show that a more careful study as in the previous section gives us an upper bound of the slope when $n=2,3$.
Namely, we show the following theorem:
\begin{thm} \label{h0upperthm}
$(1)$ Let $f\colon S\to B$ be a primitive cyclic covering fibration of type $(4,0,3)$. 
Then we have
$$
K_f^2\le \frac{129}{17}\chi_f
$$
with the equality holding if and only if any singular fiber of $f$ is a triple fiber.

\smallskip

\noindent
$(2)$ Let $f\colon S\to B$ be a primitive cyclic covering fibration of type $(g,0,3)$. 
Assume that $g>4$, or $g=4$ and $f$ has no triple fibers.
Then we have
$$
K_f^2\le \left(12-\frac{72(r-1)}{4r^2-15r+27-36\delta}\right)\chi_f,
$$
where $\delta=\left\{\begin{array}{l}
0, \ \text{if}\ r\in 6\mathbb{Z}, \\
1, \ \text{if}\ r\not\in 6\mathbb{Z}. \\
\end{array}
\right.$

\smallskip

\noindent
$(3)$ Let $f\colon S\to B$ be a primitive cyclic covering fibration of type $(g,0,2)$,
 i.e., a relatively minimal hyperelliptic fibration of genus $g$.
Then we have
$$
K_f^2\le \left(12-\frac{4(2g+1)}{g^2-1+\delta}\right)\chi_f,
$$
where $\delta=\left\{\begin{array}{l}
0, \ \text{if $g$ is odd}, \\
1, \ \text{if $g$ is even.} \\
\end{array}
\right.$
\end{thm}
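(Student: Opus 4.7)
\begin{prfofthmh0}
The plan is to parallel the proof of Theorem~\ref{h1upperthm} in Section~3: establish a local inequality $e_f(F_p)-\mu\chi_f(F_p)\ge 0$ at every $p\in B$ and sum over $B$. Via Noether's identity $12\chi_f=K_f^2+e_f$ this yields $K_f^2\le(12-\mu)\chi_f$. The value of $\mu$ will be $75/17$ in case~$(1)$, $72(r-1)/(4r^2-15r+27-36\delta)$ in case~$(2)$, and $4(2g+1)/(g^2-1+\delta)$ in case~$(3)$; each is pinned down by requiring the coefficient of the top-multiplicity index $\alpha_{r/n}^{n\mathbb{Z}+1}(F_p)$ to vanish in the expansion of $e_f(F_p)-\mu\chi_f(F_p)$.

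The $h=0$ setting simplifies the machinery of Section~3 substantially. Since $\varphi\colon W\to B$ is a relatively minimal ruled surface, every fiber $\Gamma_p$ is a smooth rational curve, so $\chi_{\varphi}(F_p)=0$, $\nu(F_p)=0$, and $j_{b,\bullet}(F_p)=0$ for $b\ge 1$. Moreover at most one curve of the fiber can belong to $R$, namely $\Gamma_p$ itself, which is already smooth, so $\eta'_p\le 1$ and $j'_{0,a}(F_p)\le\delta_{a=1}$; none of the elliptic fiber-type contributions ($\mathrm{I\hspace{-.1em}I}$, $\mathrm{I\hspace{-.1em}I\hspace{-.1em}I}$, $\mathrm{I\hspace{-.1em}V}$, $\mathrm{I}^{*}_k$ and so on) arise. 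The lower bound analogous to Lemma~\ref{alpha0lem} becomes
$$
\alpha_0^+(F_p)\ge r+(n-2)\bigl(\iota(F_p)+2\kappa(F_p)\bigr)+\delta_{n=2}\sum_{k\ge 1}2k\,\alpha^{\mathrm{co}}_{(2k+1\to 2k+1)}(F_p),
$$
proved by the same computation but without any elliptic corrections.

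Once these simplifications are in place, I expand $e_f(F_p)-\mu\chi_f(F_p)$ using the local expressions for $K_f^2(F_p)$, $\chi_f(F_p)$ and $e_f(F_p)$ stated before the theorem, rewrite $\sum_{k}\alpha''_k(F_p)$, $\sum_{k}k\,\alpha''_k(F_p)$, and $\iota(F_p)$ via Lemmas~\ref{alphaklem} and~\ref{gammaiotalem}, and apply Lemma~\ref{clearlem} together with Lemma~\ref{chainlem} to the $\alpha^{n\mathbb{Z}+1}_k(F_p)-\kappa_k(F_p)$ terms. For $n=2$, the analogues of Lemma~\ref{n2h1kappalem} and Lemma~\ref{j''01lem}(2) absorb the $\kappa(F_p)$ and $\alpha^{\mathrm{co},1}_{(2k+1\to 2k+1)}(F_p)$ contributions; their $h=0$ proofs run along the same graph-theoretic lines but are cleaner because $j'^t_{0,1}(F_p)\le 1$. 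For $n=3$ the analogue of Lemma~\ref{j''01lem}(1) does the corresponding job. The parity-correction $\delta$ enters via Lemma~\ref{chainlem}: when the nominal top multiplicity $r/n$ forces $\alpha^{n\mathbb{Z}+1}_{r/n}(F_p)=0$ for arithmetic reasons, one may sharpen the bound, shifting $\mu$ to the $\delta=1$ value. With the appropriate $\mu$ inserted, every remaining coefficient turns out to be non-negative, so the local inequality reduces to a finite case check indexed by $\eta'_p\in\{0,1\}$ and $\iota^{(3)}(F_p)+\kappa^{(3)}(F_p)\in\{0,1\}$, in complete parallel with the case analysis~(i),(ii),(iii) in the proof of Theorem~\ref{h1upperthm}.

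The main obstacle will be the equality analysis in case~$(1)$. Here $r=6$ is so small that several coefficients collapse to zero, and one must determine precisely which configurations achieve equality. The only way $F_p$ can be a triple fiber is for the covering $\widetilde{\theta}$ to be totally ramified over $\Gamma_p$, which forces $\Gamma_p\subset R$ and tightly restricts the singularities of $R$ over $p$. Running the case analysis above and cross-referencing with the explicit classification of singular fibers of type $(4,0,3)$ in \cite{enoki2} should pin equality down exactly to the triple-fiber situation described in the theorem. Summing the resulting local inequality over $p\in B$ produces the three bounds of Theorem~\ref{h0upperthm}.
\end{prfofthmh0}
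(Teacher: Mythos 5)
Your overall architecture coincides with the paper's: prove the fiberwise inequality $e_f(F_p)-\mu\chi_f(F_p)\ge 0$ and sum over $p$, with $\mu=12-(\text{the stated bound})$; your numerical values of $\mu$ are correct (indeed $\mu=75/17$ in case (1), where the paper's own text appears to misprint this as $129/17$). However, several of the mechanisms you invoke are not the right ones, and they carry the weight of the argument. First, the parity correction $\delta$ does not come from Lemma~\ref{chainlem} or from "the coefficient of $\alpha^{n\mathbb{Z}+1}_{r/n}(F_p)$ vanishing" --- that index vanishes identically, so such a normalization pins down nothing. In the $h=0$ expansion the coefficient of $\alpha_k(F_p)$ contains the quadratic $-\mu'(n+1)(nk^2-rk)$, which the paper rewrites as $Q(k)+B_n$ with $Q(k)=\mu'\bigl(n(n+1)(k-\tfrac{r}{2n})^2-\tfrac{n(n+1)\delta}{4}\bigr)$; here $\delta$ is exactly the correction needed so that $Q(k)\ge 0$ for all \emph{integers} $k$, i.e., it records whether the vertex $r/(2n)$ is an integer ($r\in 6\mathbb{Z}$ for $n=3$; $g$ odd for $n=2$). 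Without this observation the stated form of $\delta$ cannot be recovered. Second, your simplification $j'_{0,a}(F_p)\le\delta_{a=1}$ is false: irreducibility of $\Gamma_p$ gives $\eta'_p\le 1$ and $\sum_a j'_{0,a}(F_p)\le 1$, but $\Gamma_p$ may be blown up $an$ times for any $a\ge 1$, so $j'_{0,a}(F_p)$ can be nonzero for $a\ge 2$; the paper's case $n=2$ explicitly retains these terms and uses Lemma~\ref{eltrlem} to control the case $j'_{0,1}(F_p)=1$.

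The most serious gap concerns part (1). The reason $g=4$ requires a separate, weaker bound is the exceptional configuration $j_{0,1}(F_p)=2\eta_p+1+\sum_{a\ge 2}(2a-1)j_{0,a}(F_p)$ treated in Lemma~\ref{h0etalem}(1): it can occur only when $r\in 9\mathbb{Z}+6$, it forces $F_p$ to be a triple fiber, and it comes with the extra estimate $\alpha^{+}_0(F_p)\ge \tfrac{5(r-6)}{9}+j(F_p)-\eta_p+2\kappa(F_p)$. For $r>6$ the term $\tfrac{5(r-6)}{9}A_3$ absorbs the deficit and the bound of part (2) survives; for $r=6$ it does not, which is precisely what produces $129/17$ and the characterization of equality by triple fibers. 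The proof of that lemma rests on a nontrivial induction showing that the number of $3\mathbb{Z}+1$-type singularities lying over a singularity of multiplicity $n_1$ is at most $2n_1/3-2$. Deferring the inequality of part (1) to the classification in \cite{enoki2} is a legitimate alternative (the paper notes as much), but your plan neither identifies this exceptional configuration nor supplies the estimate, so the dichotomy between parts (1) and (2) --- the actual content of the case split --- is not established. Likewise the equality analysis in part (2)/(3), which in the paper reads off the extremal configurations from the vanishing locus of $Q(k)$ together with the $\alpha^{\mathrm{tr}}$-coefficient, is absent.
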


\begin{rem}
Theorem~\ref{h0upperthm}~(3) was shown by Xiao in \cite{xiao_book}
and the upper bound is known to be sharp \cite{liutan}.
\end{rem}

Recall the following lemma from \cite{enoki} (cf.\ Lemmas~\ref{alpha0lem}, \ref{alphaklem} and \ref{gammaiotalem}):

\begin{lem}\label{indexlem}
The following hold:

\smallskip

\noindent
$(1)$ $\iota(F_p)=j(F_p)-\eta_p$.

\smallskip

\noindent
$(2)$ $\alpha^{+}_0(F_p)\ge (n-2)(j(F_p)-\eta_p+2\kappa(F_p))+\delta_{n=2}\sum_{k\ge 1}2k\alpha^{\mathrm{co}}_{(2k+1\to 2k+1)}(F_p)$.

\smallskip

\noindent
$(3)$  $\sum_{k\ge 1}\alpha''_k(F_p)=\sum_{a\ge 1}(an-2)j_{0,a}(F_p)+2\eta_p-\kappa(F_p)$.

%\smallskip

%\noindent
%$(4)$  $\kappa(F_p)\le j(F_p)-\eta_p$.
\end{lem}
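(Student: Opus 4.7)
The three parts are the $h=0$ analogs of Lemmas \ref{gammaiotalem}, \ref{alpha0lem} and the identity \eqref{alphakeq} in Lemma~\ref{alphaklem}, so my plan is to adapt those proofs to the present simpler setting and check that the extra terms appearing in the $h=1$ case (the cycle correction $\delta_{\mathrm{cyc}}$, the multiple-fiber contribution $(1-1/m)r$, the fiber-type contribution $\beta_p$, the factor $\delta_{{}_m\mathrm{I}_1,\mathrm{II}}$, the genus-$1$ term $\sum aj'_{1,a}$, and the self-intersection $-2$ in the coefficient of $j'_{0,a}$) all disappear. The key structural input is that the only possible vertical component of $R$ on the relatively minimal ruled surface $W$ is a smooth fiber $\Gamma_p\cong\mathbb P^1$ of self-intersection $0$ and multiplicity $1$, and that in a blown-up ruled surface the dual graph of $\widetilde{R}_v(p)$ is a forest of rational curves.

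For (1), I will form for each $t=1,\dots,\eta_p$ the graph $\mathbf{G}^t$ on vertex set $\{v^{t,k}\}$ as in the proof of Lemma~\ref{gammaiotalem}, with edges encoding the $(t,u)$-vertical $n\mathbb{Z}$-type singularities. The graph is connected by the definition of the decomposition $\widetilde R_v(p)=D^1(p)+\cdots+D^{\eta_p}(p)$ and has $\iota^t(F_p)-j^t(F_p)+1$ independent cycles. A cycle would force $\{C^{t,k}\}_k$ to contain all components of $(\Gamma_p)_\mathrm{red}$; but in the $h=0$ case $(\Gamma_p)_\mathrm{red}$ is irreducible, and every exceptional chain grown out of a single singular point is a tree, so no cycle can form. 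Hence $\mathbf{G}^t$ is a tree for every $t$, giving $\iota^t(F_p)=j^t(F_p)-1$ and summing yields $\iota(F_p)=j(F_p)-\eta_p$.

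For (2), I will start from the identity $\alpha^+_0(F_p)=r-\#(\mathrm{Supp}(\widetilde R_h)\cap\mathrm{Supp}(\widetilde\Gamma_p))$ and write $\widetilde\Gamma_p=\sum_i m_iG_i$, bounding the defect by $\sum_i(m_i-1)\widetilde R_hG_i$. Since $\varphi$ is ruled, $m_p=1$ so the term $(1-1/m)r$ from Lemma~\ref{alpha0lem} vanishes identically. The same blow-up-replacement argument as there shows $\widetilde R_h\widehat{E}^t_x\ge n-2$ for each $2$-vertical $n\mathbb{Z}$-type singularity $x$, and in the $h=0$ case the only potential $3$-vertical type singularities listed in Lemma~\ref{3verlem} (types I\hspace{-.1em}I, I\hspace{-.1em}I\hspace{-.1em}I, I\hspace{-.1em}V) cannot occur because those configurations are singular fibers specific to elliptic surfaces; hence $\iota^{(3)}(F_p)=\kappa^{(3)}(F_p)=0$. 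Summing, $\alpha^+_0(F_p)\ge(n-2)(\iota(F_p)+2\kappa(F_p))$, which by (1) equals $(n-2)(j(F_p)-\eta_p+2\kappa(F_p))$. The $n=2$ contribution from $(2k+1\to 2k+1)$ singularities is treated verbatim as in Lemma~\ref{alpha0lem}, producing the term $\delta_{n=2}\sum_{k\ge 1}2k\alpha^{\mathrm{co}}_{(2k+1\to 2k+1)}(F_p)$.

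For (3), I follow the bookkeeping of Lemma~\ref{alphaklem} locally on each $D^t(p)$. The simplifications are: $j'^t_{1,a}=0$ (no vertical elliptic components); $\eta'_p\in\{0,1\}$, and when $\eta'_p=1$ there is a unique $C'^{t,1}=(\Gamma_p)_\mathrm{red}$ with self-intersection $0$, so it is blown up exactly $a'^{t,1}n$ times (replacing the coefficient $an-2$ in Lemma~\ref{alphaklem} with $an$); each $C''^{t,k}$ is a $(-1)$-curve and contributes $a''^{t,k}n-1$ blow-ups as before. Splitting into the two cases and summing gives
\begin{align*}
\sum_{k\ge1}\alpha''_k(F_p)=\eta''_p+\sum_{a\ge1}an\,j'_{0,a}(F_p)+\sum_{a\ge1}(an-1)j''_{0,a}(F_p)-\iota(F_p)-\kappa(F_p).
\end{align*}
Now substitute $\iota(F_p)=j(F_p)-\eta_p$ from (1) and use $j(F_p)=\sum_a j'_{0,a}+\sum_a j''_{0,a}$ together with $\eta'_p=\sum_a j'_{0,a}$ (valid because there is at most one such $t$ with $j'^t_{0,a}\in\{0,1\}$) to rewrite the right-hand side as $\sum_a(an-2)j_{0,a}(F_p)+2\eta_p-\kappa(F_p)$. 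The one technical point to double-check is the case $\eta'_p=1$: there one must verify that the $a'^{t,1}n$ contribution, the $\eta''_p$ from the $t$'s with $D'^t=0$, and the $-\iota$ correction combine to yield the clean coefficient $an-2$ on $j'_{0,a}$ and the uniform term $2\eta_p$. This is the main bookkeeping obstacle and is a routine but careful expansion.
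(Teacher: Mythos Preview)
Your proposal is correct and follows precisely the approach the paper indicates by its cross-references: the paper does not give an independent proof of Lemma~\ref{indexlem} but cites \cite{enoki} and points to Lemmas~\ref{alpha0lem}, \ref{alphaklem}, \ref{gammaiotalem} as the $h=1$ analogs, and your argument is exactly the specialization of those proofs to $h=0$. Your reasoning that $\mathbf{G}^t$ has no cycles (part (1)), that no $3$-vertical singularities arise because $\Gamma_p$ is a single smooth $\mathbb{P}^1$ and iterated blow-ups only produce transverse pairwise intersections (part (2)), and your bookkeeping in part (3) using $(C'^{t,1})^2=0$ and $\eta'_p=j'(F_p)\in\{0,1\}$ are all sound; the final simplification $\eta''_p+j'(F_p)=\eta''_p+\eta'_p=\eta_p$ closes the identity as claimed.
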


\begin{lem} \label{h0etalem}
The following hold.

\smallskip

\noindent
$(1)$ If $n=3$, then we have $j_{0,1}(F_p)\le 2\eta_p+1+\sum_{a\ge 2}(2a-1)j_{0,a}(F_p)$,
with the equality holding only if $F_p$ is a triple fiber, $r\in 9\mathbb{Z}+6$, $j(F_p)=j_{0,1}(F_p)$, $\kappa(F_p)=1$ and
$$
\alpha^{+}_0(F_p)\ge \frac{5(r-6)}{9}+j(F_p)-\eta_p+2\kappa(F_p).
$$

\smallskip

\noindent
$(2)$ If $n=2$, then 
$$
\sum_{k\ge 1}\alpha^{\mathrm{co},1}_{(2k+1\to 2k+1)}(F_p)\le
\widehat{\eta}_p+2\eta'_p+\sum_{a\ge 3}(a-2)j_{0,a}(F_p)-j'_{0,1}(F_p).
$$

\end{lem}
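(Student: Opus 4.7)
The plan is to adapt the counting arguments in Lemma~\ref{j''01lem} to the $h=0$ setting, exploiting the simplifications that $\Gamma_p\simeq\mathbb{P}^1$ is irreducible, so $\eta'_p\le 1$ and at most one $t$ has $D'^t(p)\neq 0$, and that $j_{b,a}(F_p)=0$ for $b\ge 1$. For part~(1), I would run the same case-by-case analysis per $t$ as in the proof of Lemma~\ref{j''01lem}~(1): each $(-n)$-curve $C^{t,k}$ contributing to $j^t_{0,1}(F_p)$ is either the seed $C^{t,1}$ (possible only when $D'^t(p)=0$) or contracts onto some earlier $C^{t,k'}$ with $k'<k$. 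Using Lemma~\ref{tclem}, Lemma~\ref{connlem}, and the explicit enumeration in Example~\ref{diagexa}~(2), each host $C^{t,k'}$ contributing to $j^t_{0,a}(F_p)$ with $a\ge 2$ absorbs at most $2a-1$ such contraction points, while $\Gamma_p$ itself (when $\Gamma_p\subset R$) absorbs at most $2a$. Summing over $t$, using that at most one $t$ has $D'^t(p)\neq 0$, and combining with the elementary bound $j'^t_{0,1}(F_p)\le 1$, yields
$$j_{0,1}(F_p)\le 2\eta_p+1+\sum_{a\ge 2}(2a-1)j_{0,a}(F_p),$$
where the ``$+1$'' precisely absorbs the extra contribution when $\Gamma_p\subset R$ enters the count.

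For the equality analysis, every per-$t$ inequality must be sharp. This excludes all ``case~(b)'' type $t$ (those with $D'^t(p)=0$ but $j^t_{0,\ge 2}>0$), forces every ``case~(a)'' type $t$ to realize the extremal configuration of Example~\ref{diagexa}~(2)(iii) followed by (i) or (ii) (so $j^t_{0,1}(F_p)=2$ with two $(-n)$-children), and forces the unique ``case~(c)'' $t^{*}$ with $\Gamma_p\subset R$ to occur at the minimal extremum $a=1$ so that $\Gamma_p$ is blown up exactly three times. Chasing multiplicities along each column of every singularity diagram via Lemma~\ref{multlem} and Lemma~\ref{connlem}, every $3\mathbb{Z}+1$-step must be the minimal step allowed; computing $r$ modulo $9$ from the saturated relation in Lemma~\ref{tclem} on $\Gamma_p$ then forces $r\in 9\mathbb{Z}+6$. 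In this configuration $\widetilde{F}_p$ is entirely supported on the ramification divisor and decomposes as $3D$ for its reduced part, so $F_p$ is a triple fiber, the equality $\kappa(F_p)=1$ is realized at the root $3$-vertical singularity on $\Gamma_p$, and the bound
$$\alpha^+_0(F_p)\ge \frac{5(r-6)}{9}+j(F_p)-\eta_p+2\kappa(F_p)$$
follows from Lemma~\ref{indexlem}~(2) augmented by the extra $(5/9)(r-6)$ contribution coming from the non-reduced structure of $\widetilde{F}_p$ along the $r$ horizontal branches of $R$ passing through the root singularity.

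For part~(2), the plan mirrors the proof of Lemma~\ref{j''01lem}~(2) with two adjustments: $j'_{1,a}(F_p)=0$ throughout, and when $\Gamma_p\subset R$ contributes to $j'^t_{0,a}(F_p)$ it is blown up $2a$ times (rather than $2a-2$), so its singularity diagram admits at most $a$ admissible subsets satisfying the pattern $(\ast)$ from the proof of Lemma~\ref{j''01lem}~(2). Counting singularities of odd multiplicity on each $C^{t,k}$ that are not inherited by any later $C^{t,k'}$, summing over $t$, and applying \eqref{n2j''01eq}, \eqref{n2etaeq}, and the observations $\eta'_p\le 1$ and $\sum_a j'_{0,a}(F_p)\le 1$, together with the fact that a $\Gamma_p$ realizing $j'_{0,1}(F_p)=1$ is a $(-2)$-curve carrying no odd-multiplicity singularity of $R$ at all (so that it contributes $0$ to the count, explaining the subtraction $-j'_{0,1}(F_p)$ on the right), yields the stated bound.

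The main obstacle is the classification of the equality case in part~(1), in particular proving the congruence $r\equiv 6\pmod 9$ and the triple-fiber conclusion. This is not a deep conceptual difficulty but a delicate combinatorial chase: one must saturate Lemma~\ref{tclem} simultaneously on $\Gamma_p$ and on every exceptional component, while respecting the compatibility constraints imposed between adjacent diagrams by Lemma~\ref{connlem}, and verify that only the congruence class $r\equiv 6\pmod 9$ is globally consistent with every saturation.
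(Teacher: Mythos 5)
Your derivation of the two inequalities follows the paper's route (adapting the counting scheme of Lemma~\ref{j''01lem} to the case where $\Gamma_p\simeq\mathbb{P}^1$ plays the role of the genus-one components), and your identification of the equality configurations and the congruence $r\equiv 6\pmod 9$ via Lemma~\ref{tclem} is essentially what the paper does. But there is a genuine gap in the last step of the equality case of part~(1). You claim that $\alpha^{+}_0(F_p)\ge \frac{5(r-6)}{9}+j(F_p)-\eta_p+2\kappa(F_p)$ ``follows from Lemma~\ref{indexlem}~(2) augmented by the extra $(5/9)(r-6)$ contribution coming from the non-reduced structure.'' This does not work: Lemma~\ref{indexlem}~(2) and the triple-fiber observation each give a lower bound for the \emph{same} quantity $\alpha^{+}_0(F_p)$ (namely $j(F_p)-\eta_p+2\kappa(F_p)$ and $2r/3$ respectively), and two lower bounds for one quantity cannot simply be added. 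The paper instead uses only $\alpha^{+}_0(F_p)\ge 2r/3$ and reduces the desired inequality to $\eta_p\le (r+3)/9$ (using $j(F_p)=2\eta_p+1$ and $\kappa(F_p)=1$). Proving $\eta_p\le(r+3)/9$ is the real content of this step: it requires showing that the number of singularities of type $3\mathbb{Z}+1$ lying over a given singularity of multiplicity $n_1$ is at most $2n_1/3-2$, which the paper establishes by an induction on $n_1/3$ with a case analysis on the number of first-generation $3\mathbb{Z}+1$ points. Nothing in your proposal supplies this quantitative control of $\eta_p$ (equivalently of $j(F_p)$) in terms of $r$; without it the inequality does not follow. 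You have in fact inverted the difficulty: the congruence $r\in 9\mathbb{Z}+6$ drops out of three linear relations from Lemma~\ref{tclem} on the explicit extremal diagrams, whereas the final inequality is where the work lies.

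A smaller point on part~(2): your explanation of the term $-j'_{0,1}(F_p)$ is not right. A fiber $\Gamma_p$ contributing to $j'_{0,1}(F_p)$ is blown up exactly twice and, by Lemma~\ref{eltrlem}, \emph{does} carry a singularity of odd multiplicity (this is exactly why such a $\Gamma_p$ contributes to $\alpha^{\mathrm{co},1}_{(g+2\to g+2)}(F_p)$ in the proof of Theorem~\ref{h0upperthm}); it contributes $1$, not $0$, to the count of non-inherited odd-multiplicity singularities. The subtraction $-j'_{0,1}(F_p)$ arises purely from rewriting $\sum_{a\ge 1}aj'_{0,a}(F_p)$ as $2\eta'_p-j'_{0,1}(F_p)+\sum_{a\ge 3}(a-2)j'_{0,a}(F_p)$; your justification, taken literally, would yield a stronger (and false) bound with $-2j'_{0,1}(F_p)$. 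This is repairable, but as written the reasoning is internally inconsistent with the bound you state.
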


\begin{proof}
Suppose that $n=3$. By the same argument as in the proof of Lemma~\ref{etalem}, we have
$$
j''_{0,1}(F_p)\le \overline{\eta}_p+\eta''_p+\sum_{a\ge 1}2aj'_{0,a}(F_p)+\sum_{a\ge 2}(2a-1)j''_{0,a}(F_p).
$$
Hence, if $\Gamma_p\not\subset R$, we have
$$
j_{0,1}(F_p)\le 2\eta_p+\sum_{a\ge 2}(2a-1)j_{0,a}(F_p).
$$
If $\Gamma_p\subset R$ and $\Gamma_p$ does not contribute to $j'_{0,1}(F_p)$, we have
$$
j_{0,1}(F_p)\le 2\eta_p-1+\sum_{a\ge 2}(2a-1)j_{0,a}(F_p),
$$
since $j''_{0,1}(F_p)=j_{0,1}(F_p)$, $\sum_{a\ge 1}2aj'_{0,a}(F_p)+\sum_{a\ge 2}(2a-1)j''_{0,a}(F_p)=\sum_{a\ge 2}(2a-1)j_{0,a}(F_p)+1$ and $\overline{\eta}_p\le \eta''_p\le \eta_p-1$.
If $\Gamma_p\subset R$ and $\Gamma_p$ contributes to $j'_{0,1}(F_p)$, we have
$$
j_{0,1}(F_p)\le 2\eta_p+1+\sum_{a\ge 2}(2a-1)j_{0,a}(F_p),
$$
since $j''_{0,1}(F_p)=j_{0,1}(F_p)-1$, $\sum_{a\ge 1}2aj'_{0,a}(F_p)+\sum_{a\ge 2}(2a-1)j''_{0,a}(F_p)=\sum_{a\ge 2}(2a-1)j_{0,a}(F_p)+2$ and $\overline{\eta}_p\le \eta''_p\le \eta_p-1$.
If $j_{0,1}(F_p)= 2\eta_p+1+\sum_{a\ge 2}(2a-1)j_{0,a}(F_p)$, then $\overline{\eta}_p=\eta_p-1$ and the sequence of singularity diagrams associated with $D^{t}(p)$ containing the proper transform of the fiber $\Gamma_p=C^{t,1}$ is the following:

\begin{table}[H]
\begin{center}
\begin{tabular}{c}

 \begin{minipage}{0.2\hsize}
 \begin{tabular}{c}
   \\ \cline{1-1}
 \multicolumn{1}{|c|}{$n_1$} \\ \cline{1-1}
 \multicolumn{1}{|c|}{$m_2$} \\ \cline{1-1}
 \multicolumn{1}{|c|}{$m_1$} \\ \cline{1-1}
 \multicolumn{1}{c}{\lower1ex\hbox{$\mathcal{D}^{t,1}$}}
 \end{tabular}
 \end{minipage}

\begin{minipage}{0.2\hsize}
 \begin{tabular}{c}
   \\ \cline{1-1}
 \multicolumn{1}{|c|}{$n_2$}\\ \cline{1-1}
 \multicolumn{1}{|c|}{$m_2$}\\ \cline{1-1}
 \multicolumn{1}{c}{\lower1ex\hbox{$\mathcal{D}^{t,2}$}}
 \end{tabular}
 \end{minipage}

\begin{minipage}{0.2\hsize}
 \begin{tabular}{cc}
   & \\ \cline{1-2}
 \multicolumn{1}{|c|}{$n_1$} & \multicolumn{1}{|c|}{$n_2$}\\ \cline{1-2}
 \multicolumn{2}{c}{\lower1ex\hbox{$\mathcal{D}^{t,3}$}}
 \end{tabular}
 \end{minipage}

\end{tabular}
\end{center}
\end{table}

\noindent
where $n_i\in 3\mathbb{Z}$, $m_i\in 3\mathbb{Z}+1$ and $C^{t,k}$ is the exceptional curve obtained by the blow-up at the singularity with multiplicity $m_{k-1}$ for $k=2,3$.
From Lemma~\ref{tclem}, we have $r+5=m_1+m_2+n_1$, $m_1+3=m_2+n_2$ and $m_2+2=n_1+n_2$.
In particular, we get $r=3(n_1+n_2-4) \in 9\mathbb{Z}+6$.
Since $\overline{\eta}_p=\eta''_p$, the sequence of singularity diagrams associated with another $D^{t}(p)$ is the following from Example~\ref{diagexa}~(2):

\begin{table}[H]
\begin{center}
\begin{tabular}{c}

\begin{minipage}{0.2\hsize}
 \begin{tabular}{c}
   \\ \cline{1-1}
 \multicolumn{1}{|c|}{$n_1$}\\ \cline{1-1}
 \multicolumn{1}{|c|}{$m_1$}\\ \cline{1-1}
 \multicolumn{1}{c}{\lower1ex\hbox{$\mathcal{D}^{t,1}$}}
 \end{tabular}
 \end{minipage}

\begin{minipage}{0.1\hsize}
 \begin{tabular}{c}
   \\ \cline{1-1}
 \multicolumn{1}{|c|}{$n_2$}\\ \cline{1-1}
 \multicolumn{1}{|c|}{$n_1$}\\ \cline{1-1}
 \multicolumn{1}{c}{\lower1ex\hbox{$\mathcal{D}^{t,2}$}}
 \end{tabular}
 \end{minipage}

\begin{minipage}{0.08\hsize}
 \begin{tabular}{c}
  or
 \end{tabular}
 \end{minipage}

\begin{minipage}{0.2\hsize}
 \begin{tabular}{cc}
   & \\ \cline{1-2}
 \multicolumn{1}{|c|}{$n_1$} & \multicolumn{1}{|c|}{$n_2$}\\ \cline{1-2}
 \multicolumn{2}{c}{\lower1ex\hbox{$\mathcal{D}^{t,2}$}}
 \end{tabular}
 \end{minipage}

\end{tabular}
\end{center}
\end{table}

\noindent
where $n_i\in 3\mathbb{Z}$, $m_i\in 3\mathbb{Z}+1$
and $C^{t,2}$ is the exceptional curve obtained by the blow-up at the singularity with multiplicity $m_{1}$.
From Lemma~\ref{tclem}, we have $m+3=m_1+n_1$ and $m_1+2=n_1+n_2$, where $m$ is the multiplicity of $R$ at the point to which $C^{t,1}$ is contracted.
Clearly, we have $j(F_p)=j_{0,1}(F_p)=2\eta_p+1$, $\kappa(F_p)=1$. 
Moreover, we see that $F_p$ is a triple fiber since the multiplicity of the fiber over $p$ along the exceptional curve obtained by the blow-up at a singularity of type $3\mathbb{Z}$ is $3$-multiple.
Hence, we get $\alpha^{+}_{0}(F_p)\ge 2r/3$.
To prove (1), it is sufficient to show that $2r/3-\left(j(F_p)-\eta_p+2\kappa(F_p)\right)\ge 5(r-6)/9$, i.e., $\eta_p\le (r+3)/9$.
To prove this, we will show that for any $n_1\in 3\mathbb{Z}_{>0}$, the number of singularities of type $3\mathbb{Z}+1$ over a singularity with multiplicity $n_1$ is less than or equal to $2n_1/3-2$ by induction on $n_1/3$.
If $n_1=3$, the claim is obvious. We assume that $n_1>3$ and the claim holds true until $n_1-3$.
Let $x$ be a singularity of $R$ with multiplicity $n_1$. 
Let $x_1,\dots,x_l$ be all singularities of type $3\mathbb{Z}+1$ over $x$ which is not over any singularity of type $3\mathbb{Z}+1$ over $x$. Let $m_i$ denotes the multiplicity of $R$ at $x_i$ for $i=1\dots,l$.
Then, the sequence of singularity diagrams associated with $D^{t}(p)$ obtained by the blow-up at $x_i$ is the following:

\begin{table}[H]
\begin{center}
\begin{tabular}{c}

\begin{minipage}{0.2\hsize}
 \begin{tabular}{c}
   \\ \cline{1-1}
 \multicolumn{1}{|c|}{$n_{i,1}$}\\ \cline{1-1}
 \multicolumn{1}{|c|}{$m_{i,1}$}\\ \cline{1-1}
 \multicolumn{1}{c}{\lower1ex\hbox{$\mathcal{D}^{t,1}$}}
 \end{tabular}
 \end{minipage}

\begin{minipage}{0.1\hsize}
 \begin{tabular}{c}
   \\ \cline{1-1}
 \multicolumn{1}{|c|}{$n_{i,2}$}\\ \cline{1-1}
 \multicolumn{1}{|c|}{$n_{i,1}$}\\ \cline{1-1}
 \multicolumn{1}{c}{\lower1ex\hbox{$\mathcal{D}^{t,2}$}}
 \end{tabular}
 \end{minipage}

\begin{minipage}{0.08\hsize}
 \begin{tabular}{c}
  or
 \end{tabular}
 \end{minipage}

\begin{minipage}{0.2\hsize}
 \begin{tabular}{cc}
   & \\ \cline{1-2}
 \multicolumn{1}{|c|}{$n_{i,1}$} & \multicolumn{1}{|c|}{$n_{i,2}$}\\ \cline{1-2}
 \multicolumn{2}{c}{\lower1ex\hbox{$\mathcal{D}^{t,2}$}}
 \end{tabular}
 \end{minipage}

\end{tabular}
\end{center}
\end{table}

\noindent
where $n_{i,j}\in 3\mathbb{Z}$, $m_{i,j}\in 3\mathbb{Z}+1$ satisfy that $m_i+3=m_{i,1}+n_{i,1}$, $m_{i,1}+2=n_{i,1}+n_{i,2}$
and $C_{t,1}$, $C^{t,2}$ respectively denote the exceptional curves obtained by the blow-up at the singularity $x_i$, the singularity with multiplicity $m_{i,1}$.
By the assumption of induction, the number of singularities of type $3\mathbb{Z}+1$ over $x_i$ is less than or equal to $2(n_{i,1}+n_{i,2})/3-3$. 
Hence the number of singularities of type $3\mathbb{Z}+1$ over $x$ is less than or equal to $\sum_{i=1}^{l}2(n_{i,1}+n_{i,2})/3-2l$. 
If $l=1$, then we have $n_1\ge m_1+2=2n_{1,1}+n_{1,2}-3$. Thus, we get

\begin{align*}
\frac{2}{3}n_1-2&\ge \frac{2}{3}(2n_{1,1}+n_{1,2})-4 \\
                     &\ge \frac{2}{3}(n_{1,1}+n_{1,2})-2.
\end{align*}
If $l=2$, then we have $n_1\ge m_1+m_2+1=2n_{1,1}+n_{1,2}+2n_{2,1}+n_{2,2}-9$.
Thus, we get

\begin{align*}
\frac{2}{3}n_1-2&\ge \frac{2}{3}(2n_{1,1}+n_{1,2}+2n_{2,1}+n_{2,2})-8 \\
                     &\ge \frac{2}{3}(n_{1,1}+n_{1,2}+n_{2,1}+n_{2,2})-4.
\end{align*}
If $l\ge 3$, then we have $n_1\ge \sum_{i=1}^{l}m_i=\sum_{i=1}^{l}(2n_{i,1}+n_{i,2})-5l$.
Thus, we get

\begin{align*}
\frac{2}{3}n_1-2&\ge \frac{2}{3}\sum_{i=1}^{l}(2n_{i,1}+n_{i,2})-\frac{10}{3}l-2 \\
                     &\ge \frac{2}{3}\sum_{i=1}^{l}(n_{i,1}+n_{i,2})-2l
\end{align*}
with equality holds only if $l=3$, $n_1=m_{1}+m_{2}+m_{3}$ and $n_{i,1}=3$ for any $i$.
Hence, the number of singularities of type $3\mathbb{Z}+1$ over a singularity with multiplicity $n_1$ is less than or equal to $2n_1/3-2$.
Now, $r=3(n_1+n_2-4)$ and the number of singularities of type $3\mathbb{Z}+1$ over $p$ is less than or equal to $2(n_1+n_2)/3-2$. Namely, we have
$$
j(F_p)-1=j''_{0,1}(F_p)\le \frac{2}{9}r+\frac{2}{3}.
$$
Hence, we have $\eta_p=(j(F_p)-1)/2=(r+3)/9$, which is the desired inequality.

Suppose that $n=2$. By the same argument as in the proof of Lemma~\ref{etalem}, we have
$$
\sum_{k\ge 1}\alpha^{\mathrm{co},1}_{(2k+1\to 2k+1)}(F_p)\le \widehat{\eta}_p+\sum_{a\ge 1}aj'_{0,a}(F_p)+\sum_{a\ge 3}(a-2)j''_{0,a}(F_p).
$$
Since $\sum_{a\ge 1}aj'_{0,a}(F_p)+\sum_{a\ge 3}(a-2)j''_{0,a}(F_p)=2\eta'_p+\sum_{a\ge 3}(a-2)j_{0,a}(F_p)-j'_{0,1}(F_p)$, the claim (2) follows.
\end{proof}

\begin{lem}[cf.\ Lemma~\ref{n2h1kappalem}] \label{n2kappalem}
If $n=2$, then we have
$$
\kappa(F_p)\le \frac{2}{3}\sum_{a\ge 2}(a-1)j_{0,a}(F_p).
$$
\end{lem}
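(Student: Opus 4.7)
The plan is to follow the template of Lemma~\ref{n2h1kappalem}, exploiting the drastic geometric simplifications available when $h=0$. The goal is to prove the inequality componentwise, i.e., $\kappa^{t}(F_p)\le \tfrac{2}{3}\sum_{a\ge 2}(a-1)j^{t}_{0,a}(F_p)$ for each $t=1,\dots,\eta_p$; the case $\kappa^{t}=0$ is trivial, so assume $\kappa^{t}\ge 1$.

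The first observation is that in the $h=0$ situation the total transform on $\widetilde{W}$ of the smooth fiber $\Gamma_p\cong\mathbb{P}^{1}$ is a tree of $\mathbb{P}^{1}$'s obtained from a single $\mathbb{P}^{1}$ by successive blow-ups, so every singular point of $R$ lies on the proper transforms of at most two of the $C^{t,k}$'s. In particular $\iota^{t,(u)}=\kappa^{t,(u)}=0$ for $u\ge 3$, and Lemma~\ref{indexlem}~(1) forces $\iota^{t}=j^{t}-1$ componentwise, since $\mathbf{G}^{t}$ is a connected acyclic graph on $j^{t}$ vertices.

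The next step is to count blow-ups inside $D^{t}(p)$ in two ways. Summing the number of blow-ups on each $C^{t,k}$ (namely $2a'^{t,k}$ if $C^{t,k}=\Gamma_p$ and $2a''^{t,k}-1$ if $C^{t,k}$ is an exceptional $(-1)$-curve) and, on the other hand, grouping by vertical type yields
\begin{equation*}
\sum_{a\ge 1}2a\,j^{t}_{0,a}(F_p)=A^{t}_{1}+2(\iota^{t}+\kappa^{t})(F_p)+{j''}^{t}_{0,\bullet}(F_p),
\end{equation*}
where $A^{t}_{1}$ denotes the number of $1$-vertical singularities involved in $D^{t}(p)$. Subtracting $2j^{t}(F_p)$ from both sides and using $\iota^{t}=j^{t}-1$, this rearranges to
\begin{equation*}
\sum_{a\ge 2}2(a-1)\,j^{t}_{0,a}(F_p)=A^{t}_{1}+2\kappa^{t}(F_p)+{j''}^{t}_{0,\bullet}(F_p)-2,
\end{equation*}
so the target inequality becomes $A^{t}_{1}+{j''}^{t}_{0,\bullet}(F_p)\ge\kappa^{t}(F_p)+2$.

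To establish this last bound I book-keep on the creation history of $D^{t}(p)$. Let $B^{t}$ denote the number of $1$-vertical $n\mathbb{Z}+1$ singularities in $D^{t}(p)$ and set $A^{t}_{0}=1$ if $D'^{t}(p)=0$ and $A^{t}_{0}=0$ otherwise. Classifying each exceptional $C^{t,k}$ according to the vertical type ($0$, $1$, or $2$) of the $n\mathbb{Z}+1$-multiplicity blow-up that produced it yields ${j''}^{t}_{0,\bullet}=A^{t}_{0}+B^{t}+\kappa^{t}$, while $A^{t}_{1}\ge B^{t}$ is tautological. The required inequality therefore reduces to $2B^{t}+A^{t}_{0}\ge 2$, and the main obstacle—really the only delicate point—is the claim $B^{t}\ge 1$ once $\kappa^{t}\ge 1$. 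Since a $\kappa$-singularity is $2$-vertical, two $C^{t,k}$'s must already exist at the moment of the corresponding blow-up; starting from the single initial curve, the only way to enlarge $D^{t}(p)$ to contain a second curve is via a $1$-vertical $n\mathbb{Z}+1$ blow-up, because a $2$-vertical one requires two curves a priori and a $0$-vertical one would initiate a distinct component $D^{t'}(p)$ rather than stay inside $D^{t}(p)$. This forces $B^{t}\ge 1$ and completes the argument.
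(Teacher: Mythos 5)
Your proof is correct, and its skeleton is the paper's: reduce to each $D^t(p)$, use the fact that for $h=0$ every vertical singularity is at most $2$-vertical and $\iota^t(F_p)=j^t(F_p)-1$, and convert $\sum_{a\ge 1}2a\,j^t_{0,a}(F_p)=-\sum_k(L^{t,k})^2$ into a count of blow-up incidences on the curves of $D^t(p)$. The difference lies entirely in the endgame. The paper keeps only the inequality $\sum_{a\ge 1}2a\,j^t_{0,a}(F_p)\ge j^t(F_p)+2\bigl(\iota^t(F_p)+\kappa^t(F_p)\bigr)$ and closes with the one-line observation that $\kappa^t(F_p)>0$ forces $j^t(F_p)\ge\kappa^t(F_p)+2$ (the two curves through the earliest $2$-vertical $n\mathbb{Z}+1$ point, plus the $\kappa^t(F_p)$ exceptional curves such points spawn). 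You instead retain the exact identity, reduce the claim to $A^{t}_{1}+j''^{t}_{0,\bullet}(F_p)\ge\kappa^{t}(F_p)+2$, and prove it by classifying the exceptional curves of $D^t(p)$ according to the vertical type of the $n\mathbb{Z}+1$ singularity that creates them, the crux being $B^{t}\ge 1$; your justification of that crux is sound, since the second-created member of the family must contract to a point lying on the unique earlier member, and a point blown up while only one family member exists through it is necessarily $1$-vertical of type $n\mathbb{Z}+1$. Both routes close the same gap of size $\kappa^t(F_p)+2$; the paper's is shorter, while yours makes explicit where each unit of $\sum_{a\ge 2}(a-1)j^t_{0,a}(F_p)$ originates, at the cost of the extra bookkeeping in $A^t_0$, $A^t_1$, $B^t$.
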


\begin{proof}
It is sufficient to show that
$$
\kappa^{t}(F_p)\le \frac{2}{3}\sum_{a\ge 2}(a-1)j^{t}_{0,a}(F_p)
$$
for any $t$.
If $\kappa^{t}(F_p)=0$, then it is clear.
Thus, we may assume $\kappa^{t}(F_p)>0$.
Then clearly we have $j^{t}(F_p)\ge \kappa^{t}(F_p)+2$.
Since any blow-up at a $(t,2)$-vertical type singularity contributes $-2$ to the number $\sum_{k\ge 1}(L^{t,k})^2=-\sum_{a\ge 1}2aj^{t}_{0,a}(F_p)$
and $\Gamma_p$ contains no $2$-vertical type singularity, we get
$$
-\sum_{a\ge 1}2aj^{t}_{0,a}(F_p)\le -j^{t}(F_p)+(-2)\left(\iota^{t}(F_p)+\kappa^{t}(F_p)\right)=-\left(3j^{t}(F_p)+2\kappa^{t}(F_p)-2\right).
$$
Combining this equality with $j^{t}(F_p)\ge \kappa^{t}(F_p)+2$, we have
$$
\sum_{a\ge 1}2aj^{t}_{0,a}(F_p)\ge 2j^{t}(F_p)+3\kappa^{t}(F_p).
$$
Hence we get $\kappa^{t}(F_p)\le \frac{2}{3}\sum_{a\ge 2}(a-1)j^{t}_{0,a}(F_p)$.
\end{proof}

Now, we are ready to prove Theorem~\ref{h0upperthm}.

\begin{prfofthmh0}
Let $f\colon S\to B$ be a primitive cyclic covering fibration of type $(g,0,n)$.
%We put $\delta=\left\{\begin{array}{l}
%0, \ \text{if}\ r\in 2n\mathbb{Z}, \\
%1, \ \text{if}\ r\not\in 2n\mathbb{Z}. \\
%\end{array}
%\right.$
For any $p\ge B$ and $\mu\ge 0$, we have from Lemma~\ref{indexlem}~(3) that
\begin{align}
&e_f(F_p)-\mu\chi_f(F_p) \nonumber \\
=&\;(n-1)\alpha_0(F_p)+n\sum_{k\ge 1}\alpha_k(F_p)-(2n-1)
 \varepsilon(F_p) \nonumber \\
&-\mu'\left(\frac{r(2n-1)-3n}{n}\alpha_0(F_p)+(n+1)\sum_{k\ge 1}(-nk^2+rk)\alpha_k(F_p)
-\frac{2(r(2n-1)-3n)}{n}\varepsilon(F_p)\right) \nonumber \\
%=&\;\left(n-1-\frac{r(2n-1)-3n}{n}\mu'\right)\alpha_0(F_p) 
%+\sum_{k\ge 1}\left(\mu'n(n+1)\left(k-\frac{r}{2n}\right)^2+n-
%\frac{(n+1)r^2}{4n}\mu'\right)\alpha_k(F_p) \\
%&\;-\left(2n-1-\frac{2(r(2n-1)-3n)}{n}\mu'\right)\varepsilon(F_p) \\
=&\;A_n\alpha_0(F_p) 
+\sum_{k\ge 1}\left(Q(k)+B_n\right)\alpha_k(F_p)
-\left(2A_n+1\right)\varepsilon(F_p) \nonumber \\
%=&\;A_n\alpha^{+}_0(F_p) 
%+\sum_{k\ge 1}\left(\mu'n(n+1)\left(k-\frac{r}{2n}\right)^2-\frac{n(n+1)\delta}{4}\mu'+B_n\right)\alpha'_k(F_p) \\
%&\;+\sum_{k\ge 1}\left(\mu'n(n+1)\left(k-\frac{r}{2n}\right)^2-\frac{n(n+1)\delta}{4}\mu'+B_n\right)\alpha''_k(F_p)-2A_nj(F_p)-j_{0,1}(F_p)\\
=&\;A_n\alpha^{+}_0(F_p) 
+\sum_{k\ge 1}\left(Q(k)+B_n\right)\alpha'_k(F_p)+\sum_{k\ge 1}\left(Q(k)+B_n\right)\alpha''_k(F_p)-2A_nj(F_p)-j_{0,1}(F_p) \nonumber \\
=&\;A_n\alpha^{+}_0(F_p) 
+\sum_{k\ge 1}\left(Q(k)+B_n\right)\alpha'_k(F_p)+\sum_{k\ge 1}Q(k)\alpha''_k(F_p) \nonumber \\
&+\sum_{a\ge 1}\left((an-2)B_n-2A_n-\delta_{a=1}\right)j_{0,a}(F_p)+2B_n\eta_p-B_n\kappa(F_p), \nonumber
\end{align}
where 
$$
\mu'=\frac{n-1}{12(r-1)}\mu,\quad A_n=n-1-\frac{r(2n-1)-3n}{n}\mu',\quad B_n=n-\frac{(n+1)(r^2-\delta n^2)}{4n}\mu'
$$
and
$$
Q(k)=\mu'\left(n(n+1)\left(k-\frac{r}{2n}\right)^2-\frac{n(n+1)\delta}{4}\right)\ge 0.
$$

Assume that $A_n\ge 0$ and $B_n\ge 0$. From Lemma~\ref{indexlem}~(2), $e_f(F_p)-\mu\chi_f(F_p)$ is greater than or equal to
\begin{align}
&\left((n-2)\left(j(F_p)-\eta_p+2\kappa(F_p)\right)+\delta_{n=2}\sum_{k\ge 1}2k\alpha^{\mathrm{co}}_{(2k+1\to 2k+1)}(F_p)\right)A_n+\sum_{k\ge 1}Q(k)\alpha''_k(F_p) \nonumber \\
&+\sum_{a\ge 1}\left((an-2)B_n-2A_n-\delta_{a=1}\right)j_{0,a}(F_p)+2B_n\eta_p-B_n\kappa(F_p)  \nonumber \\
=&\;\delta_{n=2}\sum_{k\ge 1}2k\alpha^{\mathrm{co}}_{(2k+1\to 2k+1)}(F_p)A_n+\sum_{a\ge 1}\left((n-4)A_n+(an-2)B_n-\delta_{a=1}\right)j_{0,a}(F_p) \nonumber \\
&+\sum_{k\ge 1}Q(k)\alpha''_k(F_p)+(2B_n-(n-2)A_n)\eta_p+(2(n-2)A_n-B_n)\kappa(F_p) \label{h0steq}. 
\end{align}

\smallskip

\noindent
(i) We first assume that $n=3$. 

\smallskip

\noindent
(i,1) Assume that $j_{0,1}(F_p)\le 2\eta_p+\sum_{a\ge 2}(2a-1)j_{0,a}(F_p)$.
We put 
$$
\mu=\frac{72(r-1)}{4r^2-15r+27-36\delta}.
$$
Then, we have $A_3>0$ and $B_3>0$.
Since the coefficient $-A_3+2B_3$ of $\eta_p$ in \eqref{h0steq} is also positive,
\eqref{h0steq} is greater than or equal to
\begin{align}
\sum_{a\ge 2}\left(\left(a-\frac{3}{2}\right)A_3+(a-1)B_3\right)j_{0,a}(F_p)+\left(-\frac{3}{2}A_3+2B_3-1\right)j_{0,1}(F_p)+(2A_3-B_3)\kappa(F_p). \label{h0i1eq}
\end{align}
By the definition of $\mu$, we have 
$-\displaystyle{\frac{3}{2}}A_3+2B_3-1=0$ and $2A_3-B_3>0$.
Thus \eqref{h0i1eq} is non-negative.

\smallskip

\noindent
(i,2) Assume that $j_{0,1}(F_p)=2\eta_p+1+\sum_{a\ge 2}(2a-1)j_{0,a}(F_p)$ and $r>6$.
We also put
$$
\mu=\frac{72(r-1)}{4r^2-15r+27-36\delta}.
$$
From Lemma~\ref{h0etalem}~(1), we have
\begin{align*}
&e_f(F_p)-\mu\chi_f(F_p) \\
\ge &\frac{5(r-6)}{9}A_3+(-A_3+B_3-1)j_{0,1}(F_p)
+(-A_3+2B_3)\eta_p+(2A_3-B_3) \\
=&\;\frac{5(r-6)}{9}A_3+(-\frac{3}{2}A_3+2B_3-1)j_{0,1}(F_p)
+\frac{5}{2}A_3-2B_3 \\
\ge &\;\frac{15}{2}A_3-2B_3 \\
>& 0.
\end{align*}

\smallskip

\noindent
(i,3) Assume that $j_{0,1}(F_p)=2\eta_p+1+\sum_{a\ge 2}(2a-1)j_{0,a}(F_p)$ and $r=6$.
We put $\mu=\displaystyle{\frac{129}{17}}$.
From Lemma~\ref{h0etalem}~(1) and $j_{0,1}(F_p)=3$, we have
$$
e_f(F_p)-\mu\chi_f(F_p)\ge -2A_3+4B_3-3=0,
$$
where the last equality follows by the definition of $\mu$.

\medskip

\noindent
(ii) We assume that $n=2$. 
Put
$$
\mu=\frac{4(2g+1)}{g^2-1+\delta}.
$$
Then we have $2B_2-2A_2-1-6(1-\delta)\mu'=0$, $A_2>0$ and $B_2\ge 1$.
From \eqref{n2j''01eq} and \eqref{n2etaeq}, \eqref{h0steq} is equal to
\begin{align*}
&\sum_{k\ge 1}Q(k)\alpha''_k(F_p)+\sum_{a\ge 2}\left(-2A_2+(2a-2)B_2\right)j_{0,a}(F_p)-B_2\kappa(F_p) \\
&+(2B_2-2A_2-1)\sum_{k\ge 1}\alpha^{\mathrm{tr}}_{(2k+1\to 2k+1)}(F_p)+\sum_{k\ge 1}\left(2(k-1)A_2+2B_2-1\right)\alpha^{\mathrm{co},0}_{(2k+1\to 2k+1)}(F_p) \\
&+\sum_{k\ge 1}\left(2(k-1)A_2-1\right)\alpha^{\mathrm{co},1}_{(2k+1\to 2k+1)}(F_p)-(2A_2+1)j'_{0,1}(F_p)+2B_2\left(\eta'_p+\widehat{\eta}_p\right).
\end{align*}
Using Lemma~\ref{h0etalem}~(2) for $-\sum_{k\ge 1}\alpha^{\mathrm{co},1}_{(2k+1\to 2k+1)}(F_p)$, \eqref{h0steq} is greater than or equal to
\begin{align}
&\sum_{k\ge 1}Q(k)\alpha''_k(F_p)+\sum_{a\ge 2}\left(-2A_2+(2a-2)B_2-(a-2)\right)j_{0,a}(F_p)-B_2\kappa(F_p) \nonumber \\
&+(2B_2-2A_2-1)\sum_{k\ge 1}\alpha^{\mathrm{tr}}_{(2k+1\to 2k+1)}(F_p)+\sum_{k\ge 1}\left(2(k-1)A_2+2B_2-1\right)\alpha^{\mathrm{co},0}_{(2k+1\to 2k+1)}(F_p) \nonumber \\
&+\sum_{k\ge 1}2(k-1)A_2\alpha^{\mathrm{co},1}_{(2k+1\to 2k+1)}(F_p)-2A_2j'_{0,1}(F_p)+(2B_2-2)\eta'_p+(2B_2-1)\widehat{\eta}_p. \label{n2h0steq}
\end{align}
By the definition of $\mu$, we see that
\begin{equation} \label{n2trineq}
\sum_{k\ge 1}Q(k)\alpha''_k(F_p)+(2B_2-2A_2-1)\sum_{k\ge 1}\alpha^{\mathrm{tr}}_{(2k+1\to 2k+1)}(F_p)\ge 0
\end{equation}
with the equality holding if and only if $\alpha''_{(g-1)/2}(F_p)=\alpha^{\mathrm{tr}}_{(g\to g)}(F_p)$ and $\alpha''_k(F_p)=0$ for $k\neq (g-1)/2$ when $g$ is odd, 
or $\alpha''_{g/2}(F_p)=\alpha^{\mathrm{tr}}_{(g+1\to g+1)}(F_p)$ and $\alpha''_k(F_p)=0$ for $k\neq g/2$ when $g$ is even.
If $j'_{0,1}(F_p)=1$, then $g$ is odd and $\Gamma_p\subset R$ contains a singularity of type $(g+2\to g+2)$ since $\Gamma_p$ is blown up just twice and Lemma~\ref{eltrlem}.
Thus, $\Gamma_p$ contributes to $\alpha^{\mathrm{co},1}_{(g+2\to g+2)}(F_p)$ and then we have
\begin{equation} \label{n2j'01ineq}
\sum_{k\ge 1}2(k-1)A_2\alpha^{\mathrm{co},1}_{(2k+1\to 2k+1)}(F_p)-2A_2j'_{0,1}(F_p)\ge 0.
\end{equation}
From Lemma~\ref{n2kappalem} and
$$
-2A_2+\frac{4}{3}(a-1)B_2-(a-2)>0,
$$
we have
\begin{equation} \label{n2kappaineq}
\sum_{a\ge 2}\left(-2A_2+(2a-2)B_2-(a-2)\right)j_{0,a}(F_p)-B_2\kappa(F_p)\ge 0.
\end{equation}
From \eqref{n2trineq}, \eqref{n2j'01ineq}, \eqref{n2kappaineq} and $B_2\ge 1$, \eqref{n2h0steq} is non-negative.
\qed
\end{prfofthmh0}

\end{document}